\def\N{\mathbb N}
\def\R{\mathbb R}
\def\oR{\overline{\mathbb R}}
\def\mC{\mathcal{C}}
\def\mG{\mathcal{G}}
\def\mJ{\mathcal{J}}
\def\mQ{\mathcal{Q}}
\def\mU{\mathcal{U}}
\def\be{{\bar{e}}}\def\we{\widetilde{e}}\def\he{\widehat{e}}
\def\op{\bar{p}}
\def\ou{\bar{u}}\def\wu{\widetilde{u}}\def\hu{\widehat{u}}
\def\ov{\bar{v}}
\def\ox{\bar{x}}
\def\oB{\bar{B}}
\def\gph{{\rm gph\,}}
\def\dom{{\rm dom\,}}
\def\conv{{\rm conv}}
\def\proj{{\rm proj}}
\def\inter{{\rm int}}
\def\st{|\,}\def\bst{\big|\,}\def\Bst{\Big|\,}
\def\Limsup{\mathop{{\rm Lim\hspace{0.3mm}sup}}}
\def\limsup{\mathop{{\rm lim\hspace{0.3mm}sup}}}
\def\liminf{\mathop{{\rm lim\hspace{0.3mm}inf}}}
\let\oldae\ae
\renewcommand{\ae}{\text{\rm\oldae}}
\begin{document}

\newtheorem{Theorem}{Theorem}[section]
\newtheorem{Proposition}[Theorem]{Proposition}
\newtheorem{Remark}[Theorem]{Remark}
\newtheorem{Lemma}[Theorem]{Lemma}
\newtheorem{Corollary}[Theorem]{Corollary}
\newtheorem{Definition}[Theorem]{Definition}
\newtheorem{Example}[Theorem]{Example}
\newtheorem{CounterExample}[Theorem]{Counter-Example}
\renewcommand{\theequation}{\thesection.\arabic{equation}}
\normalsize

\title{Subgradients of Marginal Functions in Parametric Control Problems of
       Partial Differential Equations}

\author{Nguyen Thanh Qui\footnote{College of Information and Communication Technology,
        Can Tho University, Campus II, 3/2 Street, Can Tho, Vietnam;
        ntqui@cit.ctu.edu.vn. Institut f\"{u}r Mathematik,
        Universit\"{a}t W\"{u}rzburg, Emil-Fischer-Str.~30, 97074 W\"{u}rzburg, Germany;
        thanhqui.nguyen@mathematik.uni-wuerzburg.de. The research of this author was supported
        by the Alexander von Humboldt Foundation.}\quad and\quad
        Daniel Wachsmuth\footnote{Institut f\"{u}r Mathematik, Universit\"{a}t W\"{u}rzburg,
        Emil-Fischer-Str.~30, 97074 W\"{u}rzburg, Germany;
        daniel.wachsmuth@mathematik.uni-wuerzburg.de.}}

\maketitle
\date{}

\noindent {\bf Abstract.} The paper studies generalized
differentiability properties of the marginal function of parametric
optimal control problems of semilinear elliptic partial differential
equations. We establish upper estimates for the regular and the
limiting subgradients of the marginal function. With some additional
assumptions, we show that the solution map of the perturbed optimal
control problems has local upper H\"{o}lderian selections. This
leads to a lower estimate for the regular subdifferential of the
marginal function.

\medskip
\noindent {\bf Key words.} Perturbed control problem, semilinear
elliptic equation, marginal function, local upper H\"{o}lderian
selection, regular subgradient, limiting subgradient.

\medskip
\noindent {\bf AMS subject classifications.}\,\ 35J61, 49J52, 49J53, 49K20, 49K30.%

\section{Introduction}

It is well recognized that \emph{optimal value function} (or,
\emph{marginal function}) and \emph{solution map} of parametric
optimization problems are very important in variational analysis,
optimization theory, control theory, etc. Problems of investigation
on generalized differentiability properties of the marginal function
and the solution map of parametric optimization problems are in the
research direction of differential stability of optimization
problems. Many researchers have had contributions to this research
direction such as Aubin \cite{Aub98B}, Auslender \cite{Aus79MP},
Bonnans and Shapiro \cite{BonSha00B}, Dien and Yen
\cite{DieYen91AMO}, Gauvin and Dubeau \cite{GauDub82MP,GauDub84MP},
Gollan \cite{Gol84MOR}, Mordukhovich et al.
\cite{MorNm05MOR,MorNmYn09MP}, Rockafellar \cite{Rock82MP}, Thibault
\cite{Thi91SICON}. In general, marginal functions are complicated
and intrinsically nonsmooth in perturbed parameters, therefore
generalized differentiability properties of marginal functions play
a crucial role in order to derive important information on
sensitivity and stability of optimization problems.

Recently, Mordukhovich et al. \cite{MorNmYn09MP} derived formulas
for computing and estimating the regular subdifferential and the
limiting (Mordukhovich, singular) subdifferentials of marginal
functions in Banach spaces and specified these results for important
classes of problems in parametric optimization with smooth and
nonsmooth data. Motivated by the results of \cite{MorNmYn09MP}, some
new results on differential stability of convex optimization
problems under inclusion constraints as well as under Aubin's
regularity condition have been provided in
\cite{AnYao16JOTA,AnYen15AA}. In addition, differential stability of
parametric optimal control problems governed by ordinary
differential equations (ODEs) has been studied by many authors in
\cite{AnToa18AMV,ChKiTo16JOTA,Toan15TJM,ToaKie10SVVA,ThuToa16JOTA},
where many results on the first-order behavior of the marginal
function of parametric continuous/discrete optimal control problems
with linear constraints, convex/nonconvex cost functionals have been
established.

To the best of our knowledge, although there were many works on
differential stability of optimal control problems of ODEs, the
problem of study on differential stability of optimal control
problems governed by partial differential equations (PDEs) remains
open. For this reason, in the present paper we focus on the study of
generalized differentiability properties of the marginal function of
perturbed optimal control problems for PDEs. Namely, we will
establish new formulas for computing/estimating the regular
subdifferential as well as the limiting subdifferentials in the
Mordukhovich's sense of the marginal function of perturbed optimal
control problems of semilinear elliptic PDEs with control
constraints.

For the original control problem in question, we are interested in
two classes of perturbed control problems with respect to two
different parametric spaces. In the first class of perturbed
problems, under some standard assumptions posed on the initial data
of the original control problem, we establish new upper estimates
for the regular subdifferential, the Mordukhovich subdifferential,
and the singular subdifferential of the marginal function of the
perturbed control problems. In addition, these upper estimates for
the regular and the Mordukhovich subdifferentials of the marginal
function will hold as equalities provided that the solution map of
the perturbed control problems has a local upper Lipschitzian
selection at the reference point in the graph of the solution map.
For the second class of perturbed problems, we consider parametric
bang-bang control problems, where the cost functional of such
control problems does not involve the usual quadratic term for the
control. With some additional assumptions to the above standard
assumptions, we show that the solution map of the perturbed control
problems admits a local upper H\"{o}lderian selection at the
reference point in the graph of the solution map. This leads to a
new lower estimate for the regular subdifferential of the marginal
function.

The rest of the paper is organized as follows. A class of optimal
control problems together with standard assumptions in optimal
control theory of PDEs and auxiliary results are stated in
Section~2. In Section 3, we establish upper estimates for the
regular, the Mordukhovich, and the singular subdifferentials of the
marginal function to the first class of perturbed control problems.
Section~4 is devoted to prove the existence of local upper
H\"{o}lderian selections of the solution map and new lower estimates
for the regular subdifferential of the marginal function to the
second class of perturbed control problems (parametric bang-bang
control problems). Some concluding remarks and open problems are
provided in the last section.

\section{Preliminaries}
\setcounter{equation}{0}

\subsection{Control problem statement}

The original optimal control problem that we are interested in this
paper is stated as follows
\begin{equation}\label{OptConPro}
\begin{cases}
    {\rm Minimize}   &J(u)=\displaystyle\int_\Omega L\big(x,y_u(x)\big)dx
                          +\frac{1}{2}\displaystyle\int_\Omega\zeta(x)u(x)^2dx\\
    {\rm subject~to} &\alpha(x)\leq u(x)\leq\beta(x)\quad\mbox{for a.a.}\ x\in\Omega,\\
\end{cases}
\end{equation}
where $\zeta\in L^2(\Omega)$ with $\zeta(x)\geq0$ for a.a.
$x\in\Omega$, and $y_u$ is the weak solution of the following
Dirichlet problem
\begin{equation}\label{StateEq}
\begin{cases}
\begin{aligned}
    Ay+f(x,y)&=u\ &&\mbox{in}\ \Omega\\
            y&=0  &&\mbox{on}\ \Gamma,
\end{aligned}
\end{cases}
\end{equation}
where the letter $A$ denotes the second-order elliptic differential
operator of the form
$$Ay(x)=-\sum_{i,j=1}^N\partial_{x_j}\big(a_{ij}(x)\partial_{x_i}y(x)\big).$$
The corresponding perturbed control problem of \eqref{OptConPro} is
given below
\begin{equation}\label{PerProWtCd}
\begin{cases}
    {\rm Minimize}   & \mJ(u,e)=J(u+e_y)+(e_J,y_{u+e_y})_{L^2(\Omega)}\\
    {\rm subject~to} & u\in\mG(e)=\mU_{ad}(e)\cap\mQ,
\end{cases}
\end{equation}
where $J(\cdot)$ is the cost functional of
problem~\eqref{OptConPro}, $y_{u+e_y}$ is the weak solution of the
perturbed Dirichlet problem
\begin{equation}\label{PerStaEqWtCd}
\begin{cases}
\begin{aligned}
    Ay+f(x,y)&=u+e_y\ &&\mbox{in}\ \Omega\\
            y&=0      &&\mbox{on}\ \Gamma,
\end{aligned}
\end{cases}
\end{equation}
$\mQ$ is a given subset of $L^{p_0}(\Omega)$, and
\begin{equation}\label{AdCtrStUade}
    \mU_{ad}(e)=\big\{u\in L^{q_0}(\Omega)\bst
    (\alpha+e_\alpha)(x)\leq u(x)\leq(\beta+e_\beta)(x)~\mbox{for a.a.}~x\in\Omega\big\}.
\end{equation}
We introduce $E=L^{p_1}(\Omega)\times L^{p_2}(\Omega)\times
L^{p_3}(\Omega)\times L^{p_4}(\Omega)$ the parametric space with the
norm of $e=(e_J,e_y,e_\alpha,e_\beta)\in E$ given by
\begin{equation}\label{DefNormE}
    \|e\|_E=\|e_J\|_{L^{p_1}(\Omega)}+\|e_y\|_{L^{p_2}(\Omega)}+\|e_\alpha\|_{L^{p_3}(\Omega)}
           +\|e_\beta\|_{L^{p_4}(\Omega)}.
\end{equation}
In what follows, we will write $\mU_{ad}$ for $\mU_{ad}(0)$ the set
of \emph{admissible controls} of problem~\eqref{OptConPro}.

Let us recall definitions of the marginal function and the solution
map of the perturbed control problem \eqref{PerProWtCd}. The
\emph{marginal function} $\mu:E\to\oR$ of the perturbed
problem~\eqref{PerProWtCd} is defined by
\begin{equation}\label{MrgnlFunc}
    \mu(e)=\inf_{u\in\mG(e)}\mJ(u,e),
\end{equation}
and the \emph{solution}/\emph{argminimum map} $S:E\rightrightarrows
L^{s_0}(\Omega)$ of problem~\eqref{PerProWtCd} is given by
\begin{equation}\label{SolMap}
    S(e)=\big\{u\in\mG(e)\bst\mu(e)=\mJ(u,e)\big\}.
\end{equation}
The main goal of this paper is to establish explicit formulas for
computing/estimating the regular subdifferential, the Mordukhovich
subdifferential, and the singular subdifferential of the marginal
function $\mu(\cdot)$ in \eqref{MrgnlFunc} at a given parameter
$\be\in E$.

\subsection{Generalized differentiation from variational analysis}

Let us recall some material on generalized differentiation taken
from \cite{Mor06Ba}. Unless otherwise stated, every reference norm
in a product normed space is the sum norm. Given a point $u$ in a
Banach space $X$ and $\rho>0$, we denote $B_\rho(u)$ the open ball
of center $u$ and radius $\rho$ in $X$, and $\oB_\rho(u)$ is the
corresponding closed ball. In particular, for any $p\in[1,\infty]$,
the notation $\oB^p_\rho(u)$ stands for the closed ball
$\oB_\rho(u)$ in the space $L^p(\Omega)$, i.e.,
$$\oB^p_\rho(u)=\big\{v\in L^p(\Omega)\bst\|v-u\|_{L^p(\Omega)}\leq\rho\big\}.$$
Let $F:X\rightrightarrows W$ be a multifunction between Banach
spaces. The \emph{graph} and the \emph{domain} of $F$ are the sets
$\gph F:=\{(u,v)\in X\times W\st v\in F(u)\}$ and $\dom F:=\{u\in
X\st F(u)\neq\emptyset\}$, respectively. We say that $F$ is locally
closed around the point $\bar\omega=(\ou,\ov)\in\gph F$ if $\gph F$
is locally closed around $\bar\omega$, i.e., there exists a closed
ball $\oB_\rho(\bar\omega)$ such that $\oB_\rho(\bar\omega)\cap\gph
F$ is closed in $X\times W$.

For a multifunction $\Phi:X\rightrightarrows X^*$, the
\emph{sequential Painlev\'e-Kuratowski upper limit} of $\Phi$ as
$u\to \ou$ is defined by
\begin{equation}\label{OtrLimit}
\begin{aligned}
    \displaystyle\Limsup_{u\to\ou}\Phi(u)=\Big\{
    &u^*\in X^*\Bst\mbox{there exist}~u_n\to\ou~\mbox{and}~u^*_n\stackrel{w^*}\rightharpoonup u^*~\mbox{with}\\
    &u^*_n\in\Phi(u_n)~\mbox{for every}~k\in\N=\{1,2,\dots\}\Big\}.
\end{aligned}
\end{equation}
Given an extended-real-valued function $\phi:X\to\oR$ and
$\ou\in\dom\phi:=\{u\in X\st\phi(u)<\infty\}$, the \emph{regular
subdifferential} (also called the \emph{Fr\'echet subdifferential})
of $\phi$ at the point $\ou$ is the set
$\widehat{\partial}\phi(\ou):=\widehat{\partial}_0\phi(\ou)$, where
$\widehat{\partial}_\varepsilon\phi(\ou)$ with $\varepsilon\geq0$ is
the collection of $\varepsilon$-\emph{subgradients} of $\phi$ at
$\ou$ defined by
\begin{equation}\label{RegSubdif}
    \widehat{\partial}_\varepsilon\phi(\ou)=\bigg\{u^*\in X^*\Bst\liminf_{u\to\ou}
    \frac{\phi(u)-\phi(\ou)-\langle u^*,u-\ou\rangle}{\|u-\ou\|}\geq-\varepsilon\bigg\},
\end{equation}
and the \emph{regular}/\emph{Fr\'echet upper subdifferential} of
$\phi$ at $\ou$ is given by
\begin{equation}\label{RegUpSbdif}
    \widehat{\partial}^+\phi(\ou)=-\widehat{\partial}(-\phi)(\ou).
\end{equation}
The \emph{limiting basic subdifferential} (the \emph{Mordukhovich
subdifferential}) of $\phi$ at $\ou$ is defined via the sequential
outer limit \eqref{OtrLimit} by
\begin{equation}\label{LimSubdif}
    \partial\phi(\ou)=\Limsup_{\substack{u\stackrel{\phi}\longrightarrow\ou \\ \varepsilon\downarrow0}}
    \widehat{\partial}_\varepsilon\phi(u),
\end{equation}
and the \emph{limiting singular subdifferential} (the \emph{singular
subdifferential} for short) of $\phi$ at $\ou$ is given by
\begin{equation}\label{SngSubdif}
    \partial^\infty\phi(\ou)
    =\Limsup_{\substack{ u\stackrel{\phi}\longrightarrow\ou\\ \varepsilon,\lambda\downarrow0}}
    \lambda\widehat{\partial}_\varepsilon\phi(u),
\end{equation}
where the notation $u\stackrel{\phi}\to\ou$ means that $u\to\ou$
with $\phi(u)\to\phi(\ou)$.

Note that we can equivalently put $\varepsilon=0$ in
\eqref{LimSubdif} and \eqref{SngSubdif} if $X$ is an \emph{Asplund
space} \cite{Asp68AM} (see also \cite{Mor06Ba,Phelp93B} for more
details) and $\phi$ is lower semicontinuous around $\ou$. It is
obvious that $\widehat{\partial}\phi(\ou)\subset\partial\phi(\ou)$
whenever $\phi(\ou)$ is finite. If the latter inclusion holds as
equality, $\phi$ is said to be \emph{lower regular} at $\ou$. The
class of lower regular functions is sufficiently large and important
in variational analysis and optimization; see
\cite{Mor06Ba,Mor06Bb,RoWe98B} for more details and applications.

Given a nonempty set $\Theta\subset X$, the \emph{regular and
Mordukhovich normal cones} to $\Theta$ at $\ou\in\Theta$ are
respectively defined by
\begin{equation}\label{RgLmtgNrCn}
    \widehat{N}(\ou;\Theta)=\widehat{\partial}\delta(\ou;\Theta)
    \quad\mbox{and}\quad N(\ou;\Theta)=\partial\delta(\ou;\Theta),
\end{equation}
where $\delta(\cdot;\Theta)$ is the indicator function of $\Theta$
defined by $\delta(u;\Theta)=0$ for $u\in\Theta$ and
$\delta(u;\Theta)=\infty$ otherwise. If $X$ is an Asplund space and
$\Theta\subset X$ is locally closed around $\ou\in\Omega$, we have
\begin{equation}\label{LmNrCnFrCn}
    N(\ou;\Theta)=\Limsup_{u\stackrel{\Theta}\to\ou}\widehat{N}(u;\Theta).
\end{equation}
The \emph{regular} and
\emph{Mordukhovich coderivatives} of the multifunction
$F:X\rightrightarrows W$ at the point $(\ou,\ov)\in\gph F$ are
respectively the multifunction
$\widehat{D}^*F(\ou,\ov):W^*\rightrightarrows X^*$ defined by
\begin{equation}\label{DefFrDrvtv}
    \widehat{D}^*F(\ou,\ov)(v^*)=\big\{u^*\in X^*\bst(u^*,-v^*)
    \in\widehat{N}\big((\ou,\ov);\gph F\big)\big\},~\forall v^*\in W^*,
\end{equation}
and the multifunction $D^*F(\ou,\ov):W^*\rightrightarrows X^*$ given
by
\begin{equation}\label{DefMrDrvtv}
    D^*F(\ou,\ov)(v^*)=\big\{u^*\in X^*\bst(u^*,-v^*)\in N\big((\ox,\ov);\gph F\big)\big\},~\forall v^*\in W^*.
\end{equation}
The multifunction $F$ is said to be \emph{normally regular} at
$(\ou,\ov)$ if
$$\widehat{D}^*F(\ou,\ov)(v^*)=D^*F(\ou,\ov)(v^*),~\forall v^*\in W^*.$$
If $F=f:X\to W$ is a single-valued function, we will write
$\widehat{D}^*f(\ou)(v^*)$ and $D^*f(\ou)(v^*)$ for coderivatives of
$f$ in \eqref{DefFrDrvtv} and \eqref{DefMrDrvtv}. If $f$ is
respectively Fr\'echet differentiable and strictly differentiable at
$\ou$, both the regular and Mordukhovich coderivatives are
extensions of the corresponding \emph{adjoint derivative operators}
in the sense that
$$\widehat{D}^*f(\ou)(v^*)=f'(\ou)^*v^*\quad\mbox{and}\quad D^*f(\ou)(v^*)=f'(\ou)^*v^*,~\forall v^*\in W^*.$$

The multifunction $F:X\rightrightarrows W$ is \emph{locally
Lipschitz-like} (or, $F$ has the \emph{Aubin property}
\cite{DontRock09B}) around a point $(\ou,\ov)\in\gph F$ if there
exist $\ell>0$ and neighborhoods $U$ of $\ou$, $V$ of $\ov$ such
that
$$F(u_1)\cap V\subset F(u_2)+\ell\|u_1-u_2\|\oB_W,~\forall u_1,u_2\in U,$$
where $\oB_W$ denotes the closed unit ball in $W$. Characterization
of this property via the mixed Mordukhovich coderivative of $F$ can
be found in \cite[Theorem~4.10]{Mor06Ba}. Following Robinson
\cite{Robin79MP}, a single-valued function $h:D\subset X\to W$ is
\emph{locally upper Lipschitzian} at $\ou$ if there exist $\eta>0$
and $\ell>0$ such that
\begin{equation}\label{HLipSlect}
    \|h(u)-h(\ou)\|\leq\ell\|u-\ou\|\quad\mbox{whenever}\quad u\in B_\eta(\ou)\cap D.
\end{equation}
We say that a multifunction $F:D\rightrightarrows W$ defined on some
set $D\subset X$ admits a \emph{local upper Lipschitzian selection}
at $(\ou,\ov)\in\gph F$ if there is a single-valued function $h:D\to
W$, which is locally upper Lipschitzian at $\ou$ satisfying
$h(\ou)=\ov$ and $h(u)\in F(u)$ for all $u\in D$ in a neighborhood
of $\ou$. We also call $h$ a \emph{local upper H\"{o}lderian
selection} at $(\ou,\ov)\in\gph F$ if \eqref{HLipSlect} is replaced
by the H\"{o}lder property with some exponent $\ae\geq0$ below
\begin{equation}\label{HHldrSlect}
    \|h(u)-h(\ou)\|\leq\ell\|u-\ou\|^{\ae}\quad\mbox{whenever}\quad u\in B_\eta(\ou)\cap D.
\end{equation}

\subsection{Assumptions and auxiliary results}

Let us assume that $\Omega\subset\R^N$ with $N\in\{1,2,3\}$,
$\alpha,\beta\in L^\infty(\Omega)$, $\alpha\leq\beta$, and
$\alpha\not\equiv\beta$. Moreover, $L,f:\Omega\times\R\to\R$ are
Carath\'eodory functions of class $\mC^2$ with respect to the second
variable satisfying the following assumptions.

\textbf{(A1)} The function $f(\cdot,0)\in L^{\op}(\Omega)$ with
$\op>N/2$,
$$\dfrac{\partial f}{\partial y}(x,y)\geq0\quad\mbox{for a.a.}\ x\in\Omega,$$
and for all $M>0$ there exists a constant $C_{f,M}>0$ such that
$$\left|\dfrac{\partial f}{\partial y}(x,y)\right|+\left|\dfrac{\partial^2f}{\partial y^2}(x,y)\right|\leq C_{f,M}
  \quad\mbox{for a.a.}\ x\in\Omega\ \mbox{and}\ |y|\leq M.$$
For every $M>0$ and $\varepsilon>0$ there exists $\delta>0$,
depending on $M$ and $\varepsilon$ such that
$$\left|\dfrac{\partial^2f}{\partial y^2}(x,y_2)-\dfrac{\partial^2f}{\partial y^2}(x,y_1)\right|<\varepsilon
  \quad\mbox{if}\ |y_1|,|y_2|\leq M,|y_2-y_1|\leq\delta,\ \mbox{and for a.a.}\ x\in\Omega.$$

\textbf{(A2)} The function $L(\cdot,0)\in L^1(\Omega)$ and for all
$M>0$ there are a constant $C_{L,M}>0$ and a function $\psi_M\in
L^{\op}(\Omega)$ such that for every $|y|\leq M$ and almost all
$x\in\Omega$,
$$\left|\dfrac{\partial L}{\partial y}(x,y)\right|\leq\psi_M(x),\quad
  \left|\dfrac{\partial^2L}{\partial y^2}(x,y)\right|\leq C_{L,M}.$$
For every $M>0$ and $\varepsilon>0$ there exists $\delta>0$,
depending on $M$ and $\varepsilon$ such that
$$\left|\dfrac{\partial^2L}{\partial y^2}(x,y_2)-\dfrac{\partial^2L}{\partial y^2}(x,y_1)\right|<\varepsilon
  \quad\mbox{if}\ |y_1|,|y_2|\leq M,|y_2-y_1|\leq\delta,\ \mbox{and for a.a.}\ x\in\Omega.$$

\textbf{(A3)} The set $\Omega$ is an open and bounded domain in
$\R^N$ with Lipschitz boundary $\Gamma$. The set $\mQ$ is closed,
convex, and bounded in $L^{p_0}(\Omega)$ satisfying
$\mU_{ad}(e)\cap\inter\mQ\neq\emptyset$ for some $e\in E$, where
$\inter\mQ$ stands for the interior of $\mQ$. The coefficients
$a_{ij}\in C(\bar\Omega)$ of the second-order elliptic differential
operator $A$ satisfy
$$\lambda_A\|\xi\|^2_{\R^N}\leq\sum_{i,j=1}^Na_{ij}(x)\xi_i\xi_j,\ \forall\xi\in\R^N,\ \mbox{for a.a.}\ x\in\Omega,$$
for some constant $\lambda_A>0$.

For every $u\in L^p(\Omega)$ with $p>N/2$, according to
\cite[Chapter~4]{Trolt10B}, equation \eqref{StateEq} has a unique
weak solution $y_u\in H^1_0(\Omega)\cap C(\bar\Omega)$. In addition,
there exists a constant $M_{\alpha,\beta}>0$ such that
\begin{equation}\label{EstSolEqSt}
    \|y_u\|_{H^1_0(\Omega)}+\|y_u\|_{C(\bar\Omega)}\leq M_{\alpha,\beta},~\forall u\in\mU_{ad}.
\end{equation}
The \emph{control-to-state mapping} $G:L^p(\Omega)\to
H^1_0(\Omega)\cap C(\bar\Omega)$ defined by $G(u)=y_u$ is of class
$\mC^2$. Moreover, for every $v\in L^2(\Omega)$, $z_{u,v}=G'(u)v$ is
the unique weak solution of
\begin{equation}\label{EqSolZuv}
\begin{cases}
  \begin{aligned}
     Az+\frac{\partial f}{\partial y}(x,y)z&=v\ &&\mbox{in}\ \Omega\\
                                          z&=0  &&\mbox{on}\ \Gamma,
  \end{aligned}
\end{cases}
\end{equation}
and for any $v_1,v_2\in L^2(\Omega)$, $w_{v_1,v_2}=G''(u)(v_1,v_2)$
is the unique weak solution of
\begin{equation}\label{EqSolSeGvv}
\begin{cases}
  \begin{aligned}
     Aw+\frac{\partial f}{\partial y}(x,y)w+\frac{\partial^2f}{\partial y^2}(x,y)z_{u,v_1}z_{u,v_2}
      &=0\ &&\mbox{in}\ \Omega\\
     w&=0  &&\mbox{on}\ \Gamma,
  \end{aligned}
\end{cases}
\end{equation}
where $y=G(u)$ and $z_{u,v_i}=G'(u)v_i$ for $i=1,2$.

By assumption \textbf{(A2)}, using the latter results and applying
the chain rule we deduce that the cost functional
$J:L^p(\Omega)\to\R$ with $p>N/2$ is of class $\mC^2$, and the first
and second derivatives of $J(\cdot)$ are given by
\begin{equation}\label{FDeriCost}
    J'(u)v=\int_\Omega(\varphi_u+\zeta u)vdx,
\end{equation}
and
\begin{equation}\label{SDeriCost}
    J''(u)(v_1,v_2)=\int_\Omega\left(\dfrac{\partial^2L}{\partial y^2}(x,y_u)z_{u,v_1}z_{u,v_2}
    +\zeta v_1v_2-\varphi_u\dfrac{\partial^2f}{\partial y^2}(x,y_u)z_{u,v_1}z_{u,v_2}\right)dx,
\end{equation}
where $z_{u,v_i}=G'(u)v_i$ for $i=1,2$, and $\varphi_u\in
H^1_0(\Omega)\cap C(\bar\Omega)$ is the adjoint state of $y_u$
defined as the unique  weak solution of
\begin{equation}\label{AdjStaEq}
\begin{cases}
\begin{aligned}
    A^*\varphi+\dfrac{\partial f}{\partial y}(x,y_u)\varphi
                     &=\dfrac{\partial L}{\partial y}(x,y_u)\ &&\mbox{in}\ \Omega\\
             \varphi &=0                                      &&\mbox{on}\ \Gamma,
\end{aligned}
\end{cases}
\end{equation}
where $A^*$ is the adjoint operator of $A$.

A control $\ou\in\mU_{ad}$ is said to be a
\emph{solution}/\emph{global minimum} of problem~\eqref{OptConPro}
if $J(\ou)\leq J(u)$ for all $u\in\mU_{ad}$. We will say that $\ou$
is a \emph{local solution}/\emph{local minimum} of
problem~\eqref{OptConPro} in the sense of $L^p(\Omega)$ if there
exists a closed ball $\oB^p_\varepsilon(\ou)$ such that $J(\ou)\leq
J(u)$ for all $u\in\mU_{ad}\cap\oB^p_\varepsilon(\ou)$. The local
solution $\ou$ is called \emph{strict} if $J(\ou)<J(u)$ holds for
all $u\in\mU_{ad}\cap\oB^p_\varepsilon(\ou)$ with $u\neq\ou$. Under
the assumptions given above, solutions of problem~\eqref{OptConPro}
exist. We introduce the space $Y=H^1_0(\Omega)\cap C(\bar\Omega)$
endowed with the norm
$$\|y\|_Y=\|y\|_{H^1_0(\Omega)}+\|y\|_{L^\infty(\Omega)}.$$
According to \cite[Chapter~4]{Trolt10B}, if $\ou\in\mU_{ad}$ is a
solution of problem~\eqref{OptConPro} in the sense of $L^p(\Omega)$,
then there exist a unique state $y_{\ou}\in Y$ and a unique adjoint
state $\varphi_{\ou}\in Y$ satisfying the first-order optimality
system
\begin{equation}\label{StateEqSol}
\begin{cases}
\begin{aligned}
    Ay_{\ou}+f(x,y_{\ou})&=\ou\ &&\mbox{in}\ \Omega\\
                  y_{\ou}&=0    &&\mbox{on}\ \Gamma,
\end{aligned}
\end{cases}
\end{equation}
\begin{equation}\label{AdjEq}
\begin{cases}
\begin{aligned}
    A^*\varphi_{\ou}+\dfrac{\partial f}{\partial y}(x,y_{\ou})\varphi_{\ou}
                 &=\dfrac{\partial L}{\partial y}(x,y_{\ou})\ &&\mbox{in}\ \Omega\\
    \varphi_{\ou}&=0                                          &&\mbox{on}\ \Gamma,
\end{aligned}
\end{cases}
\end{equation}
\begin{equation}\label{VarIneq}
    \int_\Omega(\varphi_{\ou}+\zeta\ou)(u-\ou)dx\geq0,~\forall u\in\mU_{ad}.
\end{equation}
Similarly, if $\ou_e\in\mG(e)$ is a solution of the perturbed
problem \eqref{PerProWtCd} with respect to $e\in E$, then $\ou_e$
satisfies the perturbed first-order optimality system
\begin{equation}\label{PertbStateEq}
\begin{cases}
\begin{aligned}
    Ay_{\ou_e+e_y}+f(x,y_{\ou_e+e_y})&=\ou_e+e_y\ &&\mbox{in}\ \Omega\\
                        y_{\ou_e+e_y}&=0        &&\mbox{on}\ \Gamma,
\end{aligned}
\end{cases}
\end{equation}
\begin{equation}\label{PertbAdjEq}
\begin{cases}
\begin{aligned}
    A^*\varphi_{\ou_e,e}+\dfrac{\partial f}{\partial y}(x,y_{\ou_e+e_y})\varphi_{\ou_e,e}
                     &=\dfrac{\partial L}{\partial y}(x,y_{\ou_e+e_y})+e_J\ &&\mbox{in}\ \Omega\\
    \varphi_{\ou_e,e}&=0                                                    &&\mbox{on}\ \Gamma,
\end{aligned}
\end{cases}
\end{equation}
\begin{equation}\label{PertbVarIneq}
    \int_\Omega(\varphi_{\ou_e,e}+\zeta\ou_e)\big(u(x)-\ou_e(x)\big)dx\geq0,\ \forall u\in\mG(e),
\end{equation}
where $\varphi_{\ou_e,e}$ is the adjoint state of $y_{\ou_e+e_y}$
for the perturbed problem~\eqref{PerProWtCd}. Furthermore, the
partial derivative of $\mJ(u,e)$ in $u$ at $\ou_e$ can be computed
by
\begin{equation}\label{PerbDermJ}
    \mJ'_u(\ou_e,e)v=\int_\Omega(\varphi_{\ou_e,e}+\zeta\ou_e)vdx.
\end{equation}

\section{Subgradients of marginal functions}
\setcounter{equation}{0}

In this section, we consider the parametric control
problem~\eqref{PerProWtCd}, where $p_0=2$ while $q_0=2$ in
\eqref{AdCtrStUade}, $p_1=p_2=p_3=p_4=2$ in \eqref{DefNormE}, and
$s_0=2$. This means that $\mQ\subset L^2(\Omega)$ and the perturbed
admissible control set $\mU_{ad}(e)\subset L^2(\Omega)$ for $e\in
E=L^2(\Omega)\times L^2(\Omega)\times L^2(\Omega)\times
L^2(\Omega)$.

\begin{Theorem}\label{ThmAxExSol}
Assume that the assumptions {\rm\textbf{(A1)}-\textbf{(A3)}} hold.
Then, for each $e\in E$ with $\mG(e)\neq\emptyset$, the perturbed
control problem~\eqref{PerProWtCd} has at least one optimal control
$\ou_e$ with associated optimal perturbed state $y_{\ou_e+e_y}\in
H^1(\Omega)\cap C(\bar\Omega)$.
\end{Theorem}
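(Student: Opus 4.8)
The plan is to establish existence by the classical direct method of the calculus of variations, the two ingredients being weak sequential compactness of the feasible set and weak lower semicontinuity of the cost functional. First I would record the structural properties of $\mG(e)=\mU_{ad}(e)\cap\mQ$. By assumption \textbf{(A3)} (with $p_0=2$) the set $\mQ$ is closed, convex, and bounded in the Hilbert space $L^2(\Omega)$, while the box $\mU_{ad}(e)$ in \eqref{AdCtrStUade} is clearly closed and convex; hence $\mG(e)$ is a nonempty (by hypothesis), closed, convex, and bounded subset of $L^2(\Omega)$. Since $\mG(e)$ is $L^2$-bounded and $e_y\in L^2(\Omega)$, the shifted controls $u+e_y$ form a bounded subset of $L^2(\Omega)$, and because $2>N/2$ for $N\in\{1,2,3\}$ the state equation \eqref{PerStaEqWtCd} has for each of them a unique weak solution $y_{u+e_y}\in H^1_0(\Omega)\cap C(\bar\Omega)$ obeying a uniform bound $\|y_{u+e_y}\|_{H^1_0(\Omega)}+\|y_{u+e_y}\|_{C(\bar\Omega)}\le M$ analogous to \eqref{EstSolEqSt}.

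Next I would verify that $\mJ(\cdot,e)$ is bounded below on $\mG(e)$, so that $\mu(e)>-\infty$: using the uniform $C(\bar\Omega)$-bound together with \textbf{(A2)} the integrand $L(x,y_{u+e_y})$ is dominated by an $L^1(\Omega)$ function, the quadratic term $\tfrac12\int_\Omega\zeta(u+e_y)^2\,dx$ is nonnegative, and $|(e_J,y_{u+e_y})_{L^2(\Omega)}|\le\|e_J\|_{L^2(\Omega)}\|y_{u+e_y}\|_{L^2(\Omega)}$ is uniformly bounded by Cauchy--Schwarz. I would then select a minimizing sequence $\{u_n\}\subset\mG(e)$ with $\mJ(u_n,e)\to\mu(e)$. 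Boundedness of $\mG(e)$ makes $\{u_n\}$ bounded in $L^2(\Omega)$, so after extracting a subsequence $u_n\rightharpoonup\ou_e$ weakly in $L^2(\Omega)$; as $\mG(e)$ is convex and closed it is weakly closed (Mazur's theorem), whence $\ou_e\in\mG(e)$.

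The decisive step, and the one I expect to be the main obstacle, is the weak lower semicontinuity of $\mJ(\cdot,e)$, which is delicate because $L(x,\cdot)$ is not assumed convex. Here I would exploit the complete continuity of the control-to-state operator $G$. The states $y_n:=y_{u_n+e_y}$ are uniformly bounded in $H^1_0(\Omega)$, so along a further subsequence $y_n$ converges weakly in $H^1_0(\Omega)$ and, by the compact embedding $H^1_0(\Omega)\hookrightarrow\hookrightarrow L^2(\Omega)$, strongly in $L^2(\Omega)$ and pointwise almost everywhere; the uniform $C(\bar\Omega)$-bound keeps them equibounded. Passing to the limit in the weak formulation of \eqref{PerStaEqWtCd}—the principal part converges by weak $H^1_0$-convergence, the semilinear term $\int_\Omega f(x,y_n)v\,dx$ by almost everywhere convergence and the growth bounds in \textbf{(A1)}, and the right-hand side by $u_n+e_y\rightharpoonup\ou_e+e_y$—identifies the limit with $y_{\ou_e+e_y}$ through uniqueness. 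This strong convergence then disposes of the two state-dependent terms: $\int_\Omega L(x,y_n)\,dx\to\int_\Omega L(x,y_{\ou_e+e_y})\,dx$ by dominated convergence (domination via \textbf{(A2)}) and $(e_J,y_n)_{L^2(\Omega)}\to(e_J,y_{\ou_e+e_y})_{L^2(\Omega)}$, while the remaining term $\tfrac12\int_\Omega\zeta(u+e_y)^2\,dx$ is convex and lower semicontinuous in $u$, hence weakly lower semicontinuous.

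Assembling these facts yields
$$\mJ(\ou_e,e)\le\liminf_{n\to\infty}\mJ(u_n,e)=\mu(e),$$
so that $\ou_e\in\mG(e)$ attains the infimum and therefore belongs to $S(e)$. Its associated state satisfies $y_{\ou_e+e_y}\in H^1_0(\Omega)\cap C(\bar\Omega)\subset H^1(\Omega)\cap C(\bar\Omega)$, which is the claimed conclusion.
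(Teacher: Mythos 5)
Your argument is correct and is exactly the standard direct-method proof (weak sequential compactness of the closed, bounded, convex set $\mG(e)$, complete continuity of the control-to-state map via the compact embedding $H^1_0(\Omega)\hookrightarrow L^2(\Omega)$, and weak lower semicontinuity of the cost). The paper's own proof records only that $\mG(e)$ is nonempty, closed, bounded, and convex in $L^2(\Omega)$ and then refers to the proof of Theorem~4.1 in the cited Qui--Wachsmuth preprint, which proceeds along the same lines, so you have essentially reconstructed the argument hidden behind that citation.
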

\begin{proof}
Let $e\in E$ be such that $\mG(e)\neq\emptyset$. Then, $\mG(e)$ is
nonempty, closed, bounded, and convex in $L^2(\Omega)$ due to
$\mU_{ad}(e)$ is closed, bounded, and convex in $L^2(\Omega)$. By
arguing similarly as in the proof of \cite[Theorem~4.1]{QuiWch17},
we obtain assertion of the theorem. $\hfill\Box$
\end{proof}

\subsection{Regular subgradients of marginal functions}

Let the marginal function $\mu(\cdot)$ from \eqref{MrgnlFunc} be
finite at some $\be\in\dom S$, and let $\ou_{\be}\in S(\be)$ be such
that $\widehat{\partial}^+\mJ(\ou_{\be},\be)\neq\emptyset$. Then,
applying \cite[Theorem~1]{MorNmYn09MP}, we obtain
\begin{equation}\label{FreSubEst}
    \widehat{\partial}\mu(\be)\subset
    \bigcap_{(u^*,e^*)\in\widehat{\partial}^+\mJ(\ou_{\be},\be)}
    \Big(e^*+\widehat{D}^*\mG(\be,\ou_{\be})(u^*)\Big).
\end{equation}
Note that by the assumptions {\rm\textbf{(A1)}-\textbf{(A3)}} the
function $\mJ$ is Fr\'echet differentiable at $(\ou_{\be},\be)$.
Thus, we get
$$\widehat{\partial}^+\mJ(\ou_{\be},\be)=\{\nabla\mJ(\ou_{\be},\be)\}
  =\big\{\big(\mJ'_u(\ou_{\be},\be),\mJ'_e(\ou_{\be},\be)\big)\big\}.$$
Consequently, from \eqref{FreSubEst} we deduce that
\begin{equation}\label{FrSbEstNoCp}
    \widehat{\partial}\mu(\be)
    \subset\mJ'_e(\ou_{\be},\be)+\widehat{D}^*\mG(\be,\ou_{\be})\big(\mJ'_u(\ou_{\be},\be)\big).
\end{equation}
If, in addition, the solution map $S:\dom\mG\rightrightarrows
L^2(\Omega)$ admits a local upper Lipschitzian selection at
$(\be,\ou_{\be})$, then by \cite[Theorem~2]{MorNmYn09MP} we obtain
\begin{equation}\label{FreSubEqul}
    \widehat{\partial}\mu(\be)
    =\mJ'_e(\ou_{\be},\be)+\widehat{D}^*\mG(\be,\ou_{\be})\big(\mJ'_u(\ou_{\be},\be)\big).
\end{equation}
We will apply \eqref{FrSbEstNoCp} to derive a new explicit formula
for estimating the Fr\'echet subdifferential
$\widehat{\partial}\mu(\be)$, and this formula will also be an exact
formula for computing $\widehat{\partial}\mu(\be)$ provided that the
solution map $S(\cdot)$ has a local upper Lipschitzian selection at
$(\be,\ou_{\be})$.

For each pair $(e,u)\in E\times L^2(\Omega)$ with $u\in\mG(e)$ we
define subsets $\Omega_1(e,u)$, $\Omega_2(e,u)$, $\Omega_3(e,u)$ of
$\Omega$ by
\begin{equation}\label{DjntSetsOm}
\begin{cases}
     \Omega_1(e,u)=\big\{x\in\Omega\st u(x)=\alpha(x)+e_\alpha(x)\big\},\\
     \Omega_2(e,u)=\big\{x\in\Omega\st u(x)\in\big(\alpha(x)+e_\alpha(x),\beta(x)+e_\beta(x)\big)\big\},\\
     \Omega_3(e,u)=\big\{x\in\Omega\st u(x)=\beta(x)+e_\beta(x)\big\}.
\end{cases}
\end{equation}
We have $\gph\mG=\gph\mU_{ad}\cap(E\times\mQ)$, where $\gph\mU_{ad}$
and $E\times\mQ$ are convex sets. In addition, we can verify that
$\gph\mU_{ad}\cap\inter(E\times\mQ)\neq\emptyset$. By
\cite[Proposition~1, p.~205]{IofTih79B}, we get
$$\begin{aligned}
    \widehat{N}\big((e,u);\gph\mG\big)
    &=\widehat{N}\big((e,u);\gph\mU_{ad}\big)+\widehat{N}\big((e,u);E\times\mQ\big)\\
    &=\widehat{N}\big((e,u);\gph\mU_{ad}\big)+\{0_E\}\times N(u;\mQ).
\end{aligned}$$
Thus, for each $(\be,\ou_{\be})\in\gph S$, we obtain
\begin{equation}\label{FreGSmRle}
\begin{aligned}
    \widehat{D}^*\mG(\be,\ou_{\be})(u^*)
    &=\big\{e^*\in E^*\bst (e^*,-u^*)\in\widehat{N}\big((\be,\ou_{\be});\gph\mG\big)\big\}\\
    &=\big\{e^*\in E^*\bst (e^*,-u^*)\in\widehat{N}\big((\be,\ou_{\be});\gph\mU_{ad}\big)+
     \{0_E\}\times N(\ou_{\be};\mQ)\big\}.
\end{aligned}
\end{equation}
In order to compute $\widehat{D}^*\mG(\be,\ou_{\be})(u^*)$
explicitly via \eqref{FreGSmRle}, we provide a formula for computing
the regular normal cone
$\widehat{N}\big((\be,\ou_{\be});\gph\mU_{ad}\big)$ in the following
lemma.

\begin{Lemma}\label{LmFNCGpUad}
Assume that the assumptions {\rm\textbf{(A1)}-\textbf{(A3)}} hold
and let $\ou_{\be}\in S(\be)$. The following formula holds that
\begin{equation}\label{FrNrCnGpUad}
\begin{aligned}
    \widehat{N}\big((\be,\ou_{\be});\gph\mU_{ad}\big)
    &=\Big\{(e^*,u^*)\in E^*\times L^2(\Omega)\Bst e^*=(0,0,e^*_\alpha,e^*_\beta),\,u^*=-e^*_\alpha-e^*_\beta,\\
    &\qquad e^*_\alpha|_{\Omega_1(\be,\ou_{\be})}\geq0,\,
            e^*_\alpha|_{\Omega\setminus\Omega_1(\be,\ou_{\be})}=0,\\
    &\qquad e^*_\beta|_{\Omega_3(\be,\ou_{\be})}\leq0,\,e^*_\beta|_{\Omega\setminus\Omega_3(\be,\ou_{\be})}=0\Big\}.
\end{aligned}
\end{equation}
\end{Lemma}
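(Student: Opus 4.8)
The plan is to exploit that $\gph\mU_{ad}=\{(e,u):\alpha+e_\alpha\le u\le\beta+e_\beta\ \text{a.e.}\}$ is \emph{convex}, being cut out by the affine inequalities $\alpha+e_\alpha\le u$ and $u\le\beta+e_\beta$. Hence the regular normal cone coincides with the normal cone of convex analysis (cf.\ \cite{Mor06Ba}), so, identifying $E^*$ with $E$ and $(L^2(\Omega))^*$ with $L^2(\Omega)$, a pair $(e^*,u^*)=((e^*_J,e^*_y,e^*_\alpha,e^*_\beta),u^*)$ belongs to $\widehat{N}\big((\be,\ou_{\be});\gph\mU_{ad}\big)$ if and only if the variational inequality
$$\int_\Omega\big(e^*_J(e_J-\be_J)+e^*_y(e_y-\be_y)+e^*_\alpha(e_\alpha-\be_\alpha)+e^*_\beta(e_\beta-\be_\beta)+u^*(u-\ou_{\be})\big)\,dx\le0$$
holds for every $(e,u)\in\gph\mU_{ad}$. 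The whole argument then consists of testing this inequality against suitable feasible perturbations. Throughout I abbreviate $\Omega_i:=\Omega_i(\be,\ou_{\be})$ for $i=1,2,3$. Since $e_J$ and $e_y$ do not enter the constraint, letting them range over all of $L^2(\Omega)$ forces $e^*_J=e^*_y=0$ at once.

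Next I would pin down $e^*_\alpha$ and $e^*_\beta$ by one-variable moves. Fixing $u=\ou_{\be}$ and $e_\beta=\be_\beta$, a perturbation $e_\alpha=\be_\alpha+d$ is feasible exactly when $d\le h:=\ou_{\be}-\alpha-\be_\alpha$, where $h\ge0$ and $h=0$ precisely on $\Omega_1$. Testing against $d=-M\mathbf 1_P$ (feasible for every $M\ge0$ and every measurable $P$) yields $\int_P e^*_\alpha\ge0$, hence $e^*_\alpha\ge0$; testing against two-sided perturbations supported on the level sets $\{h>\varepsilon\}$ and letting $\varepsilon\downarrow0$ yields $e^*_\alpha=0$ on $\Omega\setminus\Omega_1$. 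The symmetric choices (fixing $u$ and $e_\alpha$, varying $e_\beta$) give $e^*_\beta\le0$ and $e^*_\beta=0$ on $\Omega\setminus\Omega_3$.

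I would then determine $u^*$ by coupled perturbations. On the inactive set $\Omega_2$ the control moves freely in both directions with the bounds fixed, forcing $u^*=0$ there. On $\Omega_1$ I would use the coupled direction $d_u=d_\alpha=s$ (with $e_\beta$ fixed), which preserves the lower equality and leaves the upper bound slack, so $s$ of either sign is feasible on small pieces; this gives $u^*=-e^*_\alpha$ on $\Omega_1$. Symmetrically $d_u=d_\beta=s$ gives $u^*=-e^*_\beta$ on $\Omega_3$. As $e^*_\alpha$ and $e^*_\beta$ are supported on $\Omega_1$ and $\Omega_3$, these relations assemble into $u^*=-e^*_\alpha-e^*_\beta$ on all of $\Omega$, which proves the inclusion ``$\subset$''. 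The reverse inclusion ``$\supset$'' is a direct check: substituting $u^*=-e^*_\alpha-e^*_\beta$ into the variational inequality and splitting the integral over $\Omega_1$, $\Omega_2$, $\Omega_3$, each integrand is pointwise $\le0$ by feasibility ($\alpha+e_\alpha\le u\le\beta+e_\beta$) together with the sign conditions on $e^*_\alpha,e^*_\beta$.

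The main obstacle I anticipate is the measure-theoretic bookkeeping: converting ``two-sided infinitesimal moves'' into genuine conclusions requires testing against characteristic functions of the level sets $\{h>\varepsilon\}$ and passing to the limit $\varepsilon\downarrow0$, while ensuring that each test point is an honest feasible element rather than merely a tangent direction (automatic here by convexity, but it must be checked for finite steps). A second delicate point is the possibly nonempty degenerate set $\Omega_1\cap\Omega_3$, where the feasible interval collapses and $u$ cannot move; there the two-variable coupled moves yield only one-sided inequalities, and one recovers $u^*=-e^*_\alpha-e^*_\beta$ by instead using the simultaneous shift $d_u=d_\alpha=d_\beta=s$, feasible for both signs of $s$. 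I would treat this case explicitly (or remark that it is absorbed consistently into the stated formula), so that the characterization holds without any nondegeneracy assumption.
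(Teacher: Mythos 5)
Your proof is correct and follows essentially the same route as the paper's: both exploit convexity of $\gph\mU_{ad}$ to reduce the regular normal cone to a variational inequality, test it against one-sided and two-sided feasible perturbations (the latter localized to $\varepsilon$-slack level sets, exactly the paper's sets $A_\varepsilon$) to get the sign and support conditions on $e^*_\alpha,e^*_\beta$, and verify the reverse inclusion by the same pointwise-sign computation over $\Omega_1,\Omega_2,\Omega_3$. The only organizational difference is that the paper obtains $u^*=-e^*_\alpha-e^*_\beta$ in a single stroke from the global diagonal perturbation $e_\alpha-\be_\alpha=e_\beta-\be_\beta=u-\ou_{\be}$ (feasible for arbitrary increments of either sign), which sidesteps the piecewise determination of $u^*$ on $\Omega_1,\Omega_2,\Omega_3$ and the separate treatment of the degenerate set $\Omega_1\cap\Omega_3$ that your coupled local moves require.
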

\begin{proof}
Let $(e^*,u^*)\in\widehat{N}\big((\be,\ou_{\be});\gph\mU_{ad}\big)$
with $e^*=(e^*_y,e^*_J,e^*_\alpha,e^*_\beta)\in E^*$. Since the set
$\mU_{ad}(e)$ does not depend on $e_y$ and $e_J$ for every
$e=(e_y,e_J,e_\alpha,e_\beta)\in E$, we have $e^*_y=e^*_J=0$. By the
definition of $\widehat{N}\big((\be,\ou_{\be});\gph\mU_{ad}\big)$,
we have
\begin{equation}\label{LmspFrCne}
    \limsup_{(e,u)\stackrel{\gph\mU_{ad}}\longrightarrow(\be,\ou_{\be})}
    \frac{\langle e^*_\alpha,e_\alpha-\be_\alpha\rangle+
          \langle e^*_\beta,e_\beta-\be_\beta\rangle+\langle u^*,u-\ou_{\be}\rangle}
         {\|e_\alpha-\be_\alpha\|_{L^2(\Omega)}+\|e_\beta-\be_\beta\|_{L^2(\Omega)}+\|u-\ou_{\be}\|_{L^2(\Omega)}}
    \leq0.
\end{equation}
Let $e_\alpha-\be_\alpha=e_\beta-\be_\beta=u-\ou_{\be}$. Then,
$(e,u)\in\gph\mU_{ad}$ and from \eqref{LmspFrCne} we have
$$\limsup_{e_\alpha\to\be_\alpha}
  \frac{\langle e^*_\alpha+e^*_\beta+u^*,e_\alpha-\be_\alpha\rangle}{\|e_\alpha-\be_\alpha\|_{L^2(\Omega)}}\leq0.$$
This yields $e^*_\alpha+e^*_\beta+u^*=0$, or
$u^*=-e^*_\alpha-e^*_\beta$. We now consider the following
situations.

$\bullet$ Necessary conditions for $e^*_\alpha$:
\begin{itemize}
\item[$\centerdot$] Let $e_\beta=\be_\beta$, $u=\ou_{\be}$,
$e_\alpha|_{\Omega_1(\be,\ou_{\be})}\nearrow
\be_\alpha|_{\Omega_1(\be,\ou_{\be})}$,
$e_\alpha|_{\Omega\setminus\Omega_1(\be,\ou_{\be})}
=\be_\alpha|_{\Omega\setminus\Omega_1(\be,\ou_{\be})}$. Then, we
have $(e,u)\in\gph\mU_{ad}$ and from \eqref{LmspFrCne} we deduce
that $e^*_\alpha|_{\Omega_1(\be,\ou_{\be})}\geq0$.

\item[$\centerdot$] Let $e_\beta=\be_\beta$ and $u=\ou_{\be}$.
For $\varepsilon>0$, define
$A_\varepsilon:=\{x\in\Omega\st\ou_{\be}\geq\alpha+\be_\alpha+\varepsilon\}\subset
\Omega\setminus\Omega_1(\be,\ou)$. Let now $B\subset A_\varepsilon$
with positive measure. Let us set
$e_\alpha|_{\Omega_1(\be,\ou_{\be})}=\be_\alpha|_{\Omega_1(\be,\ou_{\be})}$
and
$$e_\alpha|_{\Omega\setminus\Omega_1(\be,\ou_{\be})}=\be_\alpha|_{\Omega\setminus\Omega_1(\be,\ou_{\be})}+t\chi_B,$$
where $|t|<\varepsilon$. Then, we have $(e,u)\in\gph\mU_{ad}$, and
from \eqref{LmspFrCne} we deduce
$$0\geq\limsup_{e_\alpha\to\be_\alpha}
   \frac{\langle e^*_\alpha,e_\alpha-\be_\alpha\rangle}{\|e_\alpha-\be_\alpha\|_{L^2(\Omega)}}
   \geq\limsup_{t\to0}\frac{\langle e^*_\alpha,t\chi_B\rangle}{\|t\chi_B\|_{L^2(\Omega)}}
   =\|\chi_B\|_{L^2(\Omega)}^{-1}|\langle e^*_\alpha,\chi_B\rangle|.$$
This implies $e^*_\alpha=0$ almost everywhere on $A_\varepsilon$.
Since
$\cup_{\varepsilon>0}A_\varepsilon=\Omega\setminus\Omega_1(\be,\ou_{\be})$,
we find $e^*_\alpha=0$ almost everywhere on
$\Omega\setminus\Omega_1(\be,\ou_{\be})$ as claimed.
\end{itemize}

$\bullet$ Necessary conditions for $e^*_\beta$:
\begin{itemize}
\item[$\centerdot$] Let $e_\alpha=\be_\alpha$, $u=\ou_{\be}$,
$e_\beta|_{\Omega\setminus\Omega_3(\be,\ou_{\be})}
=\be_\beta|_{\Omega\setminus\Omega_3(\be,\ou_{\be})}$,
$e_\beta|_{\Omega_3(\be,\ou_{\be})}\searrow\be_\beta|_{\Omega_3(\be,\ou_{\be})}$.
Then, we have $(e,u)\in\gph\mU_{ad}$ and from \eqref{LmspFrCne} we
deduce that $e^*_\beta|_{\Omega_3(\be,\ou_{\be})}\leq0$.

\item[$\centerdot$] By arguing similarly as in the proof of the second
necessary condition for $e^*_\alpha$ above, we also find that
$e^*_\beta|_{\Omega\setminus\Omega_3(\be,\ou_{\be})}=0$.
\end{itemize}

Conversely, pick any $(e^*,u^*)$ from the set on the right-hand side
of \eqref{FrNrCnGpUad}. Taking any sequence
$(e_n,u_n)\to(\be,\ou_{\be})$ with $(e_n,u_n)\in\gph\mU_{ad}$, we
have to show that \eqref{LmspFrCne} holds for this sequence. For
convenience, we denote $\Omega_i=\Omega_i(\be,\ou_{\be})$ for
$i=1,2,3$. We observe that
$$\begin{aligned}
     &\langle e^*_\alpha,(e_\alpha)_n-\be_\alpha\rangle+
      \langle e^*_\beta,(e_\beta)_n-\be_\beta\rangle+\langle u^*,u_n-\ou_{\be}\rangle\\
    =&\langle e^*_\alpha|_{\Omega_1},(e_\alpha)_n|_{\Omega_1}-\be_\alpha|_{\Omega_1}\rangle+
      \langle e^*_\beta|_{\Omega_3},(e_\beta)_n|_{\Omega_3}-\be_\beta|_{\Omega_3}\rangle\\
     &-\langle(e^*_\alpha+e^*_\beta)|_{\Omega_1\cup\Omega_3},u_n|_{\Omega_1\cup\Omega_3}
               -\ou_{\be}|_{\Omega_1\cup\Omega_3}\rangle\\
    =&\langle e^*_\alpha|_{\Omega_1},(e_\alpha)_n|_{\Omega_1}-\be_\alpha|_{\Omega_1}\rangle+
      \langle e^*_\beta|_{\Omega_3},(e_\beta)_n|_{\Omega_3}-\be_\beta|_{\Omega_3}\rangle\\
     &-\langle e^*_\alpha|_{\Omega_1},u_n|_{\Omega_1}-\ou_{\be}|_{\Omega_1}\rangle
      -\langle e^*_\beta|_{\Omega_3},u_n|_{\Omega_3}-\ou_{\be}|_{\Omega_3}\rangle\\
    =&\langle e^*_\alpha|_{\Omega_1},(e_\alpha)_n|_{\Omega_1}-
              \be_\alpha|_{\Omega_1}-u_n|_{\Omega_1}+\ou_{\be}|_{\Omega_1}\rangle+
      \langle e^*_\beta|_{\Omega_3},(e_\beta)_n|_{\Omega_3}-
              \be_\beta|_{\Omega_3}-u_n|_{\Omega_3}+\ou_{\be}|_{\Omega_3}\rangle\\
     &\mbox{(with $\ou_{\be}|_{\Omega_1}=\alpha|_{\Omega_1}+\be_\alpha|_{\Omega_1}$
             and $\ou_{\be}|_{\Omega_3}=\beta|_{\Omega_3}+\be_\beta|_{\Omega_3}$ by the definition of
             $\Omega_1$ and $\Omega_3$)}\\
    =&\langle\underset{\geq0}{\underbrace{e^*_\alpha|_{\Omega_1}}},
             \underset{\leq0}{\underbrace{\alpha|_{\Omega_1}+(e_\alpha)_n|_{\Omega_1}-u_n|_{\Omega_1}}}\rangle+
      \langle\underset{\leq0}{\underbrace{e^*_\beta|_{\Omega_3}}},
             \underset{\geq0}{\underbrace{\beta|_{\Omega_3}+(e_\beta)_n|_{\Omega_3}-u_n|_{\Omega_3}}}\rangle\\
    \leq&0.
\end{aligned}$$
This implies that \eqref{LmspFrCne} holds for the sequence
$\{(e_n,u_n)\}$ chosen above. $\hfill\Box$
\end{proof}

\begin{Proposition}\label{PrpFrCdrmU}
Assume that the assumptions {\rm\textbf{(A1)}-\textbf{(A3)}} hold
and let $\ou_{\be}\in S(\be)$. Then, the following formula holds
that
\begin{equation}\label{FreCdrmU}
\begin{aligned}
    \widehat{D}^*\mG(\be,\ou_{\be})(u^*)
    &=\Big\{e^*\in E^*\Bst e^*=(0,0,e^*_\alpha,e^*_\beta),\,u^*=u^*_1-u^*_2,\\
    &\qquad u^*_1=e^*_\alpha+e^*_\beta,\,u^*_2\in N(\ou_{\be};\mQ),\\
    &\qquad e^*_\alpha|_{\Omega_1(\be,\ou_{\be})}\geq0,\,e^*_\alpha|_{\Omega\setminus\Omega_1(\be,\ou_{\be})}=0,\\
    &\qquad e^*_\beta|_{\Omega_3(\be,\ou_{\be})}\leq0,\,e^*_\beta|_{\Omega\setminus\Omega_3(\be,\ou_{\be})}=0\Big\}.
\end{aligned}
\end{equation}
\end{Proposition}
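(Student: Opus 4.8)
The plan is to combine the coderivative definition \eqref{FreGSmRle} with the explicit description of $\widehat{N}\big((\be,\ou_{\be});\gph\mU_{ad}\big)$ furnished by Lemma~\ref{LmFNCGpUad}. Starting from
$$\widehat{D}^*\mG(\be,\ou_{\be})(u^*)=\big\{e^*\in E^*\bst(e^*,-u^*)\in\widehat{N}\big((\be,\ou_{\be});\gph\mU_{ad}\big)+\{0_E\}\times N(\ou_{\be};\mQ)\big\},$$
the membership $(e^*,-u^*)\in\widehat{N}\big((\be,\ou_{\be});\gph\mU_{ad}\big)+\{0_E\}\times N(\ou_{\be};\mQ)$ means there exist a pair $(\we^*,\wu^*)$ in the regular normal cone and an element $u^*_2\in N(\ou_{\be};\mQ)$ with $e^*=\we^*$ and $-u^*=\wu^*-u^*_2$. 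Applying Lemma~\ref{LmFNCGpUad} to $(\we^*,\wu^*)$ forces $\we^*=(0,0,e^*_\alpha,e^*_\beta)$ together with the sign and support conditions on $e^*_\alpha,e^*_\beta$, and gives $\wu^*=-e^*_\alpha-e^*_\beta$. Substituting this into $-u^*=\wu^*-u^*_2$ yields $u^*=e^*_\alpha+e^*_\beta+(-u^*_2)$, i.e. $u^*=u^*_1-u^*_2$ with $u^*_1=e^*_\alpha+e^*_\beta$ and $u^*_2\in N(\ou_{\be};\mQ)$, which is exactly the description in \eqref{FreCdrmU}.

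Concretely I would prove \eqref{FreCdrmU} by a two-sided inclusion. For ``$\subset$'', I start with $e^*\in\widehat{D}^*\mG(\be,\ou_{\be})(u^*)$, unpack the sum of cones as above, invoke Lemma~\ref{LmFNCGpUad} to read off $e^*=(0,0,e^*_\alpha,e^*_\beta)$ and the four sign/support constraints, and then solve the component equation $-u^*=-e^*_\alpha-e^*_\beta-u^*_2$ for $u^*$ to land on $u^*=u^*_1-u^*_2$. For ``$\supset$'', I take any $e^*=(0,0,e^*_\alpha,e^*_\beta)$ and any $u^*_2\in N(\ou_{\be};\mQ)$ satisfying the stated conditions, set $u^*_1=e^*_\alpha+e^*_\beta$ and $u^*=u^*_1-u^*_2$, and exhibit the required decomposition: by Lemma~\ref{LmFNCGpUad} the pair $\big(e^*,-u^*_1\big)=\big((0,0,e^*_\alpha,e^*_\beta),-e^*_\alpha-e^*_\beta\big)$ lies in $\widehat{N}\big((\be,\ou_{\be});\gph\mU_{ad}\big)$, while $\big(0_E,u^*_2\big)\in\{0_E\}\times N(\ou_{\be};\mQ)$, and their sum is $\big(e^*,-u^*_1+u^*_2\big)=\big(e^*,-u^*\big)$, whence $e^*\in\widehat{D}^*\mG(\be,\ou_{\be})(u^*)$ by \eqref{FreGSmRle}.

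The only genuine content beyond bookkeeping is the sum rule for regular normal cones already used to obtain \eqref{FreGSmRle}, namely $\widehat{N}\big((e,u);\gph\mG\big)=\widehat{N}\big((e,u);\gph\mU_{ad}\big)+\{0_E\}\times N(\ou_{\be};\mQ)$. This in turn rests on the constraint qualification $\gph\mU_{ad}\cap\inter(E\times\mQ)\neq\emptyset$ verified in the text and on the intersection rule \cite[Proposition~1, p.~205]{IofTih79B}; since both $\gph\mU_{ad}$ and $E\times\mQ$ are convex, the regular normal cone to the intersection splits as the sum, and for the convex set $\mQ$ the regular and limiting normal cones coincide, so writing $N(\ou_{\be};\mQ)$ is unambiguous. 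Given these facts, the main obstacle is purely notational rather than mathematical: one must be careful that the first two blocks of $e^*$ vanish (because $\mU_{ad}(e)$ depends on neither $e_y$ nor $e_J$), and that the sign of $u^*_2$ is tracked correctly through the relation $(e^*,-u^*)=(\we^*,\wu^*)+(0_E,u^*_2)$ so that $u^*_2$ enters \eqref{FreCdrmU} with a minus sign as $u^*=u^*_1-u^*_2$. With the sum rule in hand the equivalence is immediate, so I would keep the write-up short and let Lemma~\ref{LmFNCGpUad} do the heavy lifting.
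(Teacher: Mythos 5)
Your proposal is correct and takes essentially the same route as the paper, whose proof is the one-line observation that \eqref{FreCdrmU} follows from \eqref{FreGSmRle} and \eqref{FrNrCnGpUad}; you simply spell out the two inclusions. The only blemish is a sign slip in your intermediate equations: the decomposition should read $-u^*=\wu^*+u^*_2$ (equivalently $-u^*=-e^*_\alpha-e^*_\beta+u^*_2$) with $u^*_2\in N(\ou_{\be};\mQ)$ the summand coming from $\{0_E\}\times N(\ou_{\be};\mQ)$, not $-u^*=\wu^*-u^*_2$; your resulting identity $u^*=u^*_1-u^*_2$ and your verification of the reverse inclusion are nevertheless correct as written.
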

\begin{proof}
Formula~\eqref{FreCdrmU} follows directly from \eqref{FreGSmRle} and
\eqref{FrNrCnGpUad}. $\hfill\Box$
\end{proof}

\medskip
The forthcoming theorem establishes an upper estimate for the
regular subdifferential of the marginal function $\mu(\cdot)$.

\begin{Theorem}\label{ThmUpEstFrSb}
Assume that the assumptions {\rm\textbf{(A1)}-\textbf{(A3)}} hold
and let $\ou_{\be}\in S(\be)$. Then it is necessary for an element
$\he^*=(\he^*_y,\he^*_J,\he^*_\alpha,\he^*_\beta)$ from $E^*$
belonging to $\widehat{\partial}\mu(\be)$ that
\begin{equation}\label{DKFrSbDiff}
\begin{cases}
   \he^*_y=\varphi_{\ou_{\be},\be}+\zeta\ou_{\be},\\
   \he^*_J=y_{\ou_{\be}+\be_y},\\
   \he^*_\alpha|_{\Omega_1(\be,\ou_{\be})}\geq0,\,
   \he^*_\alpha|_{\Omega\setminus\Omega_1(\be,\ou_{\be})}=0,\\
   \he^*_\beta|_{\Omega_3(\be,\ou_{\be})}\leq0,\,\he^*_\beta|_{\Omega\setminus\Omega_3(\be,\ou_{\be})}=0,\\
   \he^*_\alpha+\he^*_\beta\in N(\ou_{\be};\mQ)+\varphi_{\ou_{\be},\be}+\zeta\ou_{\be}.
\end{cases}
\end{equation}
If, in addition, $S:\dom\mG\rightrightarrows L^2(\Omega)$ has a
local upper Lipschitzian selection at $(\be,\ou_{\be})$, then
condition~\eqref{DKFrSbDiff} is also sufficient for the inclusion
$\he^*\in\widehat{\partial}\mu(\be)$.
\end{Theorem}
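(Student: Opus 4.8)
The plan is to read both parts of the statement directly off the inclusion \eqref{FrSbEstNoCp} (for necessity) and the equality \eqref{FreSubEqul} (for sufficiency), once the two partial gradients $\mJ'_u(\ou_{\be},\be)$ and $\mJ'_e(\ou_{\be},\be)$ have been computed explicitly and substituted into the coderivative formula \eqref{FreCdrmU} of Proposition~\ref{PrpFrCdrmU}. Both \eqref{FrSbEstNoCp} and \eqref{FreSubEqul} apply here: since $\ou_{\be}\in S(\be)$ we have $\be\in\dom S$ and $\mu(\be)=\mJ(\ou_{\be},\be)$ is finite, while under {\rm\textbf{(A1)}-\textbf{(A3)}} the functional $\mJ$ is Fr\'echet differentiable at $(\ou_{\be},\be)$, so that $\widehat{\partial}^+\mJ(\ou_{\be},\be)=\{\nabla\mJ(\ou_{\be},\be)\}\neq\emptyset$, as already noted before \eqref{FrSbEstNoCp}.

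First I would compute the gradient of $\mJ(u,e)=J(u+e_y)+(e_J,y_{u+e_y})_{L^2(\Omega)}$. The key observation is that $u$ and $e_y$ enter $\mJ$ only through the sum $u+e_y$, so the partial derivatives of $\mJ$ in $u$ and in $e_y$ coincide as functionals on $L^2(\Omega)$; by \eqref{PerbDermJ} both are represented by $\varphi_{\ou_{\be},\be}+\zeta\ou_{\be}$. The term $(e_J,y_{u+e_y})_{L^2(\Omega)}$ is linear in $e_J$, so the partial derivative in $e_J$ is represented by $y_{\ou_{\be}+\be_y}$, and $\mJ$ does not depend on $e_\alpha$ or $e_\beta$, so the corresponding components vanish. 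Writing the dual variable in the ordering $(\cdot_y,\cdot_J,\cdot_\alpha,\cdot_\beta)$ used in the proof of Lemma~\ref{LmFNCGpUad}, this yields $\mJ'_u(\ou_{\be},\be)=\varphi_{\ou_{\be},\be}+\zeta\ou_{\be}$ and $\mJ'_e(\ou_{\be},\be)=(\varphi_{\ou_{\be},\be}+\zeta\ou_{\be},\,y_{\ou_{\be}+\be_y},\,0,\,0)$.

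For necessity I would take $\he^*\in\widehat{\partial}\mu(\be)$ and use \eqref{FrSbEstNoCp} to write $\he^*=\mJ'_e(\ou_{\be},\be)+e^*$ for some $e^*\in\widehat{D}^*\mG(\be,\ou_{\be})(\mJ'_u(\ou_{\be},\be))$. Substituting $u^*:=\mJ'_u(\ou_{\be},\be)=\varphi_{\ou_{\be},\be}+\zeta\ou_{\be}$ into \eqref{FreCdrmU} gives $e^*=(0,0,e^*_\alpha,e^*_\beta)$ with the stated sign and support conditions on $e^*_\alpha,e^*_\beta$ and with $\varphi_{\ou_{\be},\be}+\zeta\ou_{\be}=(e^*_\alpha+e^*_\beta)-u^*_2$ for some $u^*_2\in N(\ou_{\be};\mQ)$. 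Adding $\mJ'_e(\ou_{\be},\be)$ componentwise produces exactly the first four lines of \eqref{DKFrSbDiff} (with $\he^*_\alpha=e^*_\alpha$ and $\he^*_\beta=e^*_\beta$), and rearranging the last relation into $\he^*_\alpha+\he^*_\beta=\varphi_{\ou_{\be},\be}+\zeta\ou_{\be}+u^*_2$ gives the final inclusion $\he^*_\alpha+\he^*_\beta\in N(\ou_{\be};\mQ)+\varphi_{\ou_{\be},\be}+\zeta\ou_{\be}$.

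For sufficiency, when $S$ admits a local upper Lipschitzian selection at $(\be,\ou_{\be})$ the inclusion \eqref{FrSbEstNoCp} improves to the equality \eqref{FreSubEqul}, so it suffices to reverse the construction: given $\he^*$ satisfying \eqref{DKFrSbDiff}, set $e^*_\alpha:=\he^*_\alpha$, $e^*_\beta:=\he^*_\beta$, and $u^*_2:=(\he^*_\alpha+\he^*_\beta)-(\varphi_{\ou_{\be},\be}+\zeta\ou_{\be})$, which lies in $N(\ou_{\be};\mQ)$ by the fifth line of \eqref{DKFrSbDiff}. One then checks via \eqref{FreCdrmU} that $(0,0,e^*_\alpha,e^*_\beta)\in\widehat{D}^*\mG(\be,\ou_{\be})(\varphi_{\ou_{\be},\be}+\zeta\ou_{\be})$ and that $\he^*=\mJ'_e(\ou_{\be},\be)+(0,0,e^*_\alpha,e^*_\beta)$, whence $\he^*\in\widehat{\partial}\mu(\be)$ by \eqref{FreSubEqul}. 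The only genuinely delicate point is bookkeeping: keeping the coordinate ordering of $E^*$ consistent and confirming that it is the \emph{perturbed} adjoint state $\varphi_{\ou_{\be},\be}$ and perturbed state $y_{\ou_{\be}+\be_y}$ (rather than their unperturbed counterparts) that appear, which is guaranteed by \eqref{PerbDermJ} together with the perturbed optimality system. All the substantive analysis is already carried by \eqref{FrSbEstNoCp}, \eqref{FreSubEqul}, and Proposition~\ref{PrpFrCdrmU}.
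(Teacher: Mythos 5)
Your proposal is correct and follows essentially the same route as the paper's own proof: necessity via the upper estimate \eqref{FrSbEstNoCp} combined with the explicit computation of $\mJ'_u(\ou_{\be},\be)$ and $\mJ'_e(\ou_{\be},\be)$ and the coderivative formula \eqref{FreCdrmU}, and sufficiency by upgrading to the equality \eqref{FreSubEqul} under the upper Lipschitzian selection hypothesis. Your explicit reverse construction of $u^*_2$ in the sufficiency step is slightly more detailed than the paper's one-line remark, but the substance is identical.
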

\begin{proof}
Pick any
$\he^*=(\he^*_y,\he^*_J,\he^*_\alpha,\he^*_\beta)\in\widehat{\partial}\mu(\be)$.
Using \eqref{FrSbEstNoCp} we obtain the following inclusion
$$\he^*\in\mJ'_e(\ou_{\be},\be)+\widehat{D}^*\mG(\be,\ou_{\be})\big(\mJ'_u(\ou_{\be},\be)\big),$$
or, equivalently, as follows
\begin{equation}\label{InclFrSbGr}
    \he^*-\mJ'_e(\ou_{\be},\be)\in\widehat{D}^*\mG(\be,\ou_{\be})\big(\mJ'_u(\ou_{\be},\be)\big).
\end{equation}
From \eqref{PerProWtCd} we have
$$\mJ(u,e)=J(u+e_y)+\big(e_J,G(u+e_y)\big)_{L^2(\Omega)}.$$
It follows that
$$\mJ'_u(\ou_{\be},\be)v=J'(\ou_{\be}+\be_y)v+\big(\be_J,G'(\ou_{\be}+\be_y)v\big)_{L^2(\Omega)},$$
and
\begin{equation}\label{DermJue}
\begin{aligned}
    \mJ'_e(\ou_{\be},\be)\we
    &=J'(\ou_{\be}+\be_y)\we_y+\big(\be_J,G'(\ou_{\be}+\be_y)\we_y\big)_{L^2(\Omega)}
     +\big(\we_J,G(\ou_{\be}+\be_y)\big)_{L^2(\Omega)}\\
    &=\mJ'_u(\ou_{\be},\be)\we_y+(\we_J,y_{\ou_{\be}+\be_y})_{L^2(\Omega)},
\end{aligned}
\end{equation}
where $y_{\ou_{\be}+\be_y}=G(\ou_{\be}+\be_y)$. Using
\eqref{PerbDermJ} we find that
$\mJ'_u(\ou_{\be},\be)=\varphi_{\ou_{\be},\be}+\zeta\ou_{\be}$.
Combining this with \eqref{DermJue} and the fact that $\mJ(u,e)$
does not depend on $e_\alpha$ and $e_\beta$, we obtain
$$\mJ'_e(\ou_{\be},\be)
=\big(\varphi_{\ou_{\be},\be}+\zeta\ou_{\be},y_{\ou_{\be}+\be_y},0_{L^2(\Omega)},0_{L^2(\Omega)}\big).$$
Consequently, we have
\begin{equation}\label{HeStrmJe}
    \he^*-\mJ'_e(\ou_{\be},\be)
    =\big(\he^*_y-\varphi_{\ou_{\be},\be}-\zeta\ou_{\be},\he^*_J-y_{\ou_{\be}+\be_y},\he^*_\alpha,\he^*_\beta\big).
\end{equation}
From \eqref{HeStrmJe}, \eqref{InclFrSbGr} and \eqref{FreCdrmU} we
deduce that
\begin{equation}\label{YJAlBeStr}
    \he^*_y-\varphi_{\ou_{\be},\be}-\zeta\ou_{\be}=0,\quad\he^*_J-y_{\ou_{\be}+\be_y}=0,
    \quad\he^*_\alpha=e^*_\alpha,\quad\he^*_\beta=e^*_\beta,
\end{equation}
where $e^*_\alpha$ and $e^*_\beta$ satisfy the following condition
$$\begin{cases}
    e^*_\alpha|_{\Omega_1(\be,\ou_{\be})}\geq0,\, e^*_\alpha|_{\Omega\setminus\Omega_1(\be,\ou_{\be})}=0,\\
    e^*_\beta|_{\Omega_3(\be,\ou_{\be})}\leq0,\,e^*_\beta|_{\Omega\setminus\Omega_3(\be,\ou_{\be})}=0,\\
    \mJ'_u(\ou_{\be},\be)=u^*_1-u^*_2,\\
    u^*_1=e^*_\alpha+e^*_\beta,\,u^*_2\in N(\ou_{\be};\mQ),
\end{cases}$$
or, equivalently, as follows
\begin{equation}\label{AlBeStr}
\begin{cases}
    e^*_\alpha|_{\Omega_1(\be,\ou_{\be})}\geq0,\,e^*_\alpha|_{\Omega\setminus\Omega_1(\be,\ou_{\be})}=0,\\
    e^*_\beta|_{\Omega_3(\be,\ou_{\be})}\leq0,\,e^*_\beta|_{\Omega\setminus\Omega_3(\be,\ou_{\be})}=0,\\
    \varphi_{\ou_{\be},\be}+\zeta\ou_{\be}=e^*_\alpha+e^*_\beta-u^*_2,\\
    u^*_2\in N(\ou_{\be};\mQ).
\end{cases}
\end{equation}
Combining \eqref{YJAlBeStr} with \eqref{AlBeStr} we get
$$\begin{cases}
   \he^*_y=\varphi_{\ou_{\be},\be}+\zeta\ou_{\be},\\
   \he^*_J=y_{\ou_{\be}+\be_y},\\
   \he^*_\alpha|_{\Omega_1(\be,\ou_{\be})}\geq0,\,
   \he^*_\alpha|_{\Omega\setminus\Omega_1(\be,\ou_{\be})}=0,\\
   \he^*_\beta|_{\Omega_3(\be,\ou_{\be})}\leq0,\,\he^*_\beta|_{\Omega\setminus\Omega_3(\be,\ou_{\be})}=0,\\
   \he^*_y=\he^*_\alpha+\he^*_\beta-u^*_2,\,u^*_2\in N(\ou_{\be};\mQ),
\end{cases}$$
which yields \eqref{DKFrSbDiff}.

If, in addition, $S:\dom\mG\rightrightarrows L^2(\Omega)$ admits a
local upper Lipschitzian selection at $(\be,\ou_{\be})$, then in the
arguments above we use equality~\eqref{FreSubEqul} instead of
estimate~\eqref{FrSbEstNoCp} to deduce that
condition~\eqref{DKFrSbDiff} is necessary and sufficient for the
inclusion $\he^*\in\widehat{\partial}\mu(\be)$. $\hfill\Box$
\end{proof}

\subsection{Limiting subgradients of marginal functions}

The next proposition provides us with an explicit formula for
computing the Mordukhovich coderivative of the multifunction
$\mG(\cdot)$ that will be used to establish upper estimates for the
Mordukhovich and the singular subdifferentials of the marginal
function $\mu(\cdot)$.

\begin{Proposition}\label{PropMorCdrG}
Assume that the assumptions {\rm\textbf{(A1)}-\textbf{(A3)}} hold
and let $\ou_{\be}\in S(\be)$. Then, for every $u^*\in L^2(\Omega)$,
we have
\begin{equation}\label{MorCdrmU}
\begin{aligned}
    D^*\mG(\be,\ou_{\be})(u^*)
    &=\widehat{D}^*\mG(\be,\ou_{\be})(u^*)\\
    &=\Big\{e^*\in E^*\Bst e^*=(0,0,e^*_\alpha,e^*_\beta),\,u^*=u^*_1-u^*_2,\\
    &\qquad u^*_1=e^*_\alpha+e^*_\beta,\,u^*_2\in N(\ou_{\be};\mQ),\\
    &\qquad e^*_\alpha|_{\Omega_1(\be,\ou_{\be})}\geq0,\,e^*_\alpha|_{\Omega\setminus\Omega_1(\be,\ou_{\be})}=0,\\
    &\qquad e^*_\beta|_{\Omega_3(\be,\ou_{\be})}\leq0,\,e^*_\beta|_{\Omega\setminus\Omega_3(\be,\ou_{\be})}=0\Big\}.
\end{aligned}
\end{equation}
\end{Proposition}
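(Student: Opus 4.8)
The plan is to establish \emph{normal regularity} of $\mG$ at $(\be,\ou_{\be})$, from which both the equality $D^*\mG(\be,\ou_{\be})(u^*)=\widehat{D}^*\mG(\be,\ou_{\be})(u^*)$ and the explicit formula on the right-hand side of \eqref{MorCdrmU} follow immediately. The inclusion $\widehat{D}^*\mG(\be,\ou_{\be})(u^*)\subset D^*\mG(\be,\ou_{\be})(u^*)$ is automatic, since $\widehat{N}\subset N$ always holds by the definitions in \eqref{RgLmtgNrCn}. Thus the only substantive task is the reverse inclusion, and the natural route is to exploit convexity of the graph of $\mG$.

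First I would observe that $\gph\mG$ is convex. Indeed, $\gph\mG=\gph\mU_{ad}\cap(E\times\mQ)$, and as already noted before Lemma~\ref{LmFNCGpUad} both $\gph\mU_{ad}$ and $E\times\mQ$ are convex; the intersection of convex sets is convex. (Convexity of $\gph\mU_{ad}$ follows because the box constraints $(\alpha+e_\alpha)(x)\le u(x)\le(\beta+e_\beta)(x)$ are preserved under convex combinations of pairs $(e,u)$, while $E\times\mQ$ is convex since $\mQ$ is convex by \textbf{(A3)}.) Being closed and convex, $\gph\mG$ is locally closed around $(\be,\ou_{\be})$.

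Next I would show that a convex set is normally regular, i.e., $\widehat{N}\big((\be,\ou_{\be});\gph\mG\big)=N\big((\be,\ou_{\be});\gph\mG\big)$. Since $E$ and $L^2(\Omega)$ are Hilbert spaces, the product $E\times L^2(\Omega)$ is Asplund, so formula~\eqref{LmNrCnFrCn} applies and expresses $N$ as the Painlev\'e--Kuratowski outer limit of the regular normal cones $\widehat{N}\big((e,u);\gph\mG\big)$ as $(e,u)\to(\be,\ou_{\be})$ in $\gph\mG$. For a convex set the regular normal cone $\widehat{N}\big((e,u);\gph\mG\big)$ coincides with the convex normal cone $\{w^*:\langle w^*,(e',u')-(e,u)\rangle\le0\ \text{for all}\ (e',u')\in\gph\mG\}$. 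I would then take any sequence $(e_n,u_n)\to(\be,\ou_{\be})$ in $\gph\mG$ together with $w^*_n\in\widehat{N}\big((e_n,u_n);\gph\mG\big)$ converging weakly$^*$ to some $w^*$, fix an arbitrary $(e',u')\in\gph\mG$, and pass to the limit in the splitting
$$\langle w^*_n,(e',u')-(e_n,u_n)\rangle=\langle w^*_n,(e',u')-(\be,\ou_{\be})\rangle+\langle w^*_n,(\be,\ou_{\be})-(e_n,u_n)\rangle\le0.$$
Here the first term converges to $\langle w^*,(e',u')-(\be,\ou_{\be})\rangle$ by weak$^*$ convergence against the fixed element $(e',u')-(\be,\ou_{\be})$, while the second term is bounded by $\|w^*_n\|\,\|(\be,\ou_{\be})-(e_n,u_n)\|\to0$ because $\{w^*_n\}$ is bounded (weak$^*$ convergent sequences are bounded) and $(e_n,u_n)\to(\be,\ou_{\be})$ strongly. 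Hence $\langle w^*,(e',u')-(\be,\ou_{\be})\rangle\le0$ for every $(e',u')\in\gph\mG$, i.e., $w^*\in\widehat{N}\big((\be,\ou_{\be});\gph\mG\big)$, which gives the nontrivial inclusion $N\subset\widehat{N}$.

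Finally, from $\widehat{N}=N$ on $\gph\mG$ the two coderivatives coincide by their definitions \eqref{DefFrDrvtv}--\eqref{DefMrDrvtv}, so $D^*\mG(\be,\ou_{\be})(u^*)=\widehat{D}^*\mG(\be,\ou_{\be})(u^*)$; the explicit description in \eqref{MorCdrmU} is then precisely the formula already proved for $\widehat{D}^*\mG$ in Proposition~\ref{PrpFrCdrmU}. I expect the main delicate point to be the limiting argument of the previous paragraph, specifically the interplay between the strong convergence $(e_n,u_n)\to(\be,\ou_{\be})$ and the weak$^*$ convergence $w^*_n\rightharpoonup w^*$; the step that makes it go through cleanly is the uniform boundedness of $\{w^*_n\}$, which alone controls the cross term and rules out any loss in the passage to the limit.
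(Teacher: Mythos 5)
Your proof is correct, but it takes a genuinely different route from the paper's. You reduce everything to the observation that $\gph\mG=\gph\mU_{ad}\cap(E\times\mQ)$ is a closed convex subset of the Hilbert (hence Asplund) space $E\times L^2(\Omega)$, so that the regular normal cone coincides with the normal cone of convex analysis at every point and the representation \eqref{LmNrCnFrCn} collapses to $N=\widehat{N}$; normal regularity of $\mG$ then hands you both the equality of coderivatives and, via Proposition~\ref{PrpFrCdrmU}, the explicit formula. Your limiting argument for $N\subset\widehat{N}$ (splitting $\langle w^*_n,(e',u')-(e_n,u_n)\rangle$ and controlling the cross term by uniform boundedness of the weak$^*$ convergent sequence) is sound, and the convexity of $\gph\mU_{ad}$ is indeed already noted in the paper before Lemma~\ref{LmFNCGpUad}. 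The paper instead argues directly on the coordinates: it takes $e^*\in D^*\mG(\be,\ou_{\be})(u^*)$, realizes it as a weak$^*$ limit of $e^*_n\in\widehat{D}^*\mG(e_n,u_n)(u^*_n)$, and then transfers each condition in \eqref{FreCdrmU} to the limit --- the sign conditions directly, the support conditions $e^*_\alpha|_{\Omega\setminus\Omega_1}=0$ and $e^*_\beta|_{\Omega\setminus\Omega_3}=0$ via pointwise a.e.\ convergence of $(e_n,u_n)$ and dominated convergence on the sets $A_\varepsilon$, and the $N(\cdot;\mQ)$ component by a separate weak$^*$/strong pairing argument. Your approach is shorter and more conceptual, sidestepping the delicate transfer of the active-set conditions; the paper's approach is more explicit and does not rely on the abstract identification of $\widehat N$ with the convex normal cone, but at the cost of the measure-theoretic bookkeeping. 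Both are valid proofs of the proposition.
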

\begin{proof}
We observe that $\gph\mG$ is closed in the product space $E\times
L^2(\Omega)$. By the definitions of coderivatives, we have
$\widehat{D}^*\mG(\be,\ou_{\be})(u^*)\subset
D^*\mG(\be,\ou_{\be})(u^*)$. Let us verify the opposite inclusion.
Fix any $e^*=(e^*_y,e^*_J,e^*_\alpha,e^*_\beta)\in
D^*\mG(\be,\ou_{\be})(u^*)$. Then, by \eqref{DefFrDrvtv},
\eqref{DefMrDrvtv}, and \eqref{LmNrCnFrCn}, there exist sequences
$(e_n,u_n)\in\gph\mG$ and $(e^*_n,u^*_n)\in E^*\times L^2(\Omega)$
satisfying
$$(e_n,u_n)\to(\be,\ou_{\be}),~(e^*_n,u^*_n)\stackrel{w^*}\rightharpoonup
  (e^*,u^*),~e^*_n\in\widehat{D}^*\mG(e_n,u_n)(u^*_n),~\forall n\in\N$$
and
$$(e_n,u_n)\to(\be,\ou_{\be})~\mbox{pointwise a.e. on}~\Omega.$$
For every $n\in\N$, since
$e^*_n\in\widehat{D}^*\mG(e_n,u_n)(u^*_n)$, by \eqref{FreCdrmU} we
infer that $e^*_n=\big(0,0,(e^*_n)_\alpha,(e^*_n)_\beta\big)$
satisfies the following conditions
\begin{equation}\label{CndFreMor}
\begin{cases}
    u^*_n=(u^*_n)_1-(u^*_n)_2,\\
    (u^*_n)_1=(e^*_n)_\alpha+(e^*_n)_\beta,\,(u^*_n)_2\in N(u_n;\mQ),\\
    (e^*_n)_\alpha|_{\Omega_1(e_n,u_n)}\geq0,\,(e^*_n)_\alpha|_{\Omega\setminus\Omega_1(e_n,u_n)}=0,\\
    (e^*_n)_\beta|_{\Omega_3(e_n,u_n)}\leq0,\,(e^*_n)_\beta|_{\Omega\setminus\Omega_3(e_n,u_n)}=0.
\end{cases}
\end{equation}
By
$e^*_n=\big(0,0,(e^*_n)_\alpha,(e^*_n)_\beta\big)\stackrel{w^*}\rightharpoonup
e^*$, we deduce that $e^*=(0,0,e^*_\alpha,e^*_\beta\big)$ with
$$(e^*_n)_\alpha\stackrel{w^*}\rightharpoonup e^*_\alpha\quad\mbox{and}\quad
(e^*_n)_\beta\stackrel{w^*}\rightharpoonup e^*_\beta.$$ From this
and \eqref{CndFreMor} it follows that $e^*_\alpha\ge0$ and
$e^*_\beta\le0$ on $\Omega$. We show that
$e^*_\alpha|_{\Omega\setminus\Omega_1(\be,\ou_{\be})}=0$ and
$e^*_\beta|_{\Omega\setminus\Omega_3(\be,\ou_{\be})}=0$. Let
$\varepsilon>0$ be given. Let $B\subset A_\varepsilon:=\{x\in
\Omega\st\ou\ge\alpha+\be_\alpha+\varepsilon\}\subset
\Omega\setminus\Omega_1(\be,\ou_{\be})$ be a bounded set of positive
measure. Since $(e^*_n)_\alpha=0$ on
$\Omega\setminus\Omega_1(\be,\ou_{\be})$, we get
$$\begin{aligned}
    \langle e^*_\alpha,\chi_B\rangle
    &=\lim_{n\to\infty}\langle(e^*_n)_\alpha,\chi_B\rangle\\
    &=\lim_{n\to\infty}\langle(e^*_n)_\alpha,\chi_B|_{\Omega\setminus\Omega_1(e_n,u_n)}
     +\chi_B|_{\Omega_1(e_n,u_n)}\rangle\\
    &=\lim_{n\to\infty}\langle(e^*_n)_\alpha,\chi_B|_{\Omega_1(e_n,u_n)}\rangle.
\end{aligned}$$
Due to pointwise convergence, we have $\chi_B
\chi_{\Omega_1(e_n,u_n)} \to 0$ pointwise almost everywhere. By
dominated convergence theorem, $\chi_B \chi_{\Omega_1(e_n,u_n)} \to
0$ in $L^2(\Omega)$. Hence, $\lim_{n\to\infty} \langle
(e^*_n)_\alpha, \chi_B \rangle=0$. It follows that $e^*_\alpha= 0$
on $A_\varepsilon$ for all $\varepsilon>0$, which in turn implies
$e^*_\alpha= 0$ on $\Omega\setminus\Omega_1(\be,\ou_{\be})$.
Similarly, we can prove
$e^*_\beta|_{\Omega\setminus\Omega_3(\be,\ou_{\be})}=0$. We have
shown that
\begin{equation}\label{CndEAlBe}
\begin{cases}
    e^*_\alpha|_{\Omega_1(\be,\ou_{\be})}\geq0,\,e^*_\alpha|_{\Omega\setminus\Omega_1(\be,\ou_{\be})}=0,\\
    e^*_\beta|_{\Omega_3(\be,\ou_{\be})}\leq0,\,e^*_\beta|_{\Omega\setminus\Omega_3(\be,\ou_{\be})}=0.
\end{cases}
\end{equation}
Since $u_n\to\ou_{\be}$ with $u_n\in\mQ$, we have
$e^*_\alpha+e^*_\beta-u^*\in N(\ou_{\be};\mQ)$. Indeed, for all
$v\in\mQ$, due to
$$(e^*_n)_\alpha+(e^*_n)_\beta-u^*_n=(u^*_n)_2\in N(u_n;\mQ),$$
we obtain
$\langle(e^*_n)_\alpha+(e^*_n)_\beta-u^*_n,v-u_n\rangle\leq0$ for
every $n\in\N$. In addition, since
$$(e^*_n)_\alpha+(e^*_n)_\beta-u^*_n\stackrel{w^*}\rightharpoonup
e^*_\alpha+e^*_\beta-u^*\quad\mbox{and}\quad v-u_n\to v-\ou_{\be},$$
we have
$$\langle(e^*_n)_\alpha+(e^*_n)_\beta-u^*_n,v-u_n\rangle\to\langle e^*_\alpha+e^*_\beta-u^*,v-\ou_{\be}\rangle.$$
This implies that $\langle
e^*_\alpha+e^*_\beta-u^*,v-\ou_{\be}\rangle\leq0$, which yields
$e^*_\alpha+e^*_\beta-u^*\in N(\ou_{\be};\mQ)$. We now put
$$u^*_1=e^*_\alpha+e^*_\beta\quad\mbox{and}\quad u^*_2=u^*_1-u^*\in N(\ou_{\be};\mQ).$$
From this and \eqref{CndEAlBe} we obtain
$e^*\in\widehat{D}^*\mG(\be,\ou_{\be})(u^*)$. Thus,
$D^*\mG(\be,\ou_{\be})(u^*)\subset\widehat{D}^*\mG(\be,\ou_{\be})(u^*)$.
We have shown that
$D^*\mG(\be,\ou_{\be})(u^*)=\widehat{D}^*\mG(\be,\ou_{\be})(u^*)$
and \eqref{MorCdrmU} follows. $\hfill\Box$
\end{proof}

\medskip
The next two theorems establish respectively upper estimates for the
Mordukhovich and the singular subdifferentials of the marginal
function $\mu(\cdot)$.

\begin{Theorem}\label{ThmEsCmpMrSb}
Assume that the assumptions {\rm\textbf{(A1)}-\textbf{(A3)}} hold
and let $\ou_{\be}\in S(\be)$. Then it is necessary for an element
$e^*=(e^*_y,e^*_J,e^*_\alpha,e^*_\beta)$ from $E^*$ belonging to
$\partial\mu(\be)$ that
\begin{equation}\label{DKMrSbDiff}
\begin{cases}
    e^*_y=\varphi_{\ou_{\be},\be}+\zeta\ou_{\be},\\
    e^*_J=y_{\ou_{\be}+\be_y},\\
    e^*_\alpha|_{\Omega_1(\be,\ou_{\be})}\geq0,\,e^*_\alpha|_{\Omega\setminus\Omega_1(\be,\ou_{\be})}=0,\\
    e^*_\beta|_{\Omega_3(\be,\ou_{\be})}\leq0,\,e^*_\beta|_{\Omega\setminus\Omega_3(\be,\ou_{\be})}=0,\\
    e^*_\alpha+e^*_\beta\in N(\ou_{\be};\mQ)+\varphi_{\ou_{\be},\be}+\zeta\ou_{\be}.
\end{cases}
\end{equation}
If, in addition, $S:\dom\mG\rightrightarrows L^2(\Omega)$ admits a
local upper Lipschitzian selection at $(\be,\ou_{\be})$, then the
marginal function $\mu(\cdot)$ is lower regular at $\be$ and
\eqref{DKMrSbDiff} is also sufficient for the inclusion
$e^*\in\partial\mu(\be)$.
\end{Theorem}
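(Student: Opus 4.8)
The plan is to prove the necessary condition for $e^*\in\partial\mu(\be)$ by combining the marginal-function subdifferential calculus of Mordukhovich et al.\ with the coderivative formula already established in Proposition~\ref{PropMorCdrG}. Specifically, since $\mJ$ is $\mathcal C^2$ (hence strictly differentiable) at $(\ou_{\be},\be)$, the upper estimate for the limiting subdifferential from \cite[Theorem~1]{MorNmYn09MP} reads
\begin{equation}\label{MorSubEst}
    \partial\mu(\be)\subset\mJ'_e(\ou_{\be},\be)+D^*\mG(\be,\ou_{\be})\big(\mJ'_u(\ou_{\be},\be)\big),
\end{equation}
the limiting analogue of \eqref{FrSbEstNoCp}. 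First I would record that the strict differentiability of $\mJ$ makes both the regular and the limiting coderivatives of $\mG$ the relevant objects, and that the gradient $\mJ'_e(\ou_{\be},\be)$ has already been computed in the proof of Theorem~\ref{ThmUpEstFrSb} to be $\big(\varphi_{\ou_{\be},\be}+\zeta\ou_{\be},\,y_{\ou_{\be}+\be_y},\,0,\,0\big)$, together with $\mJ'_u(\ou_{\be},\be)=\varphi_{\ou_{\be},\be}+\zeta\ou_{\be}$.

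Second I would apply Proposition~\ref{PropMorCdrG}, whose key output is that the Mordukhovich coderivative of $\mG$ coincides with its regular coderivative, $D^*\mG(\be,\ou_{\be})(u^*)=\widehat D^*\mG(\be,\ou_{\be})(u^*)$. This is the structural fact that lets the limiting estimate \eqref{MorSubEst} collapse onto exactly the same right-hand side as the regular estimate \eqref{FrSbEstNoCp}. Consequently the chain of equalities and inclusions carried out in the proof of Theorem~\ref{ThmUpEstFrSb} transfers verbatim: writing $e^*-\mJ'_e(\ou_{\be},\be)\in D^*\mG(\be,\ou_{\be})(\mJ'_u(\ou_{\be},\be))$ and feeding in \eqref{MorCdrmU}, I would read off the five conditions of \eqref{DKMrSbDiff} componentwise, exactly as \eqref{YJAlBeStr}--\eqref{AlBeStr} were extracted before. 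The last condition $e^*_\alpha+e^*_\beta\in N(\ou_{\be};\mQ)+\varphi_{\ou_{\be},\be}+\zeta\ou_{\be}$ comes from the relation $\mJ'_u=e^*_\alpha+e^*_\beta-u^*_2$ with $u^*_2\in N(\ou_{\be};\mQ)$.

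For the sufficiency part under the local upper Lipschitzian selection hypothesis, the plan is to invoke \cite[Theorem~2]{MorNmYn09MP}, which upgrades the inclusion \eqref{MorSubEst} to the equality $\partial\mu(\be)=\mJ'_e(\ou_{\be},\be)+D^*\mG(\be,\ou_{\be})(\mJ'_u(\ou_{\be},\be))$ and simultaneously gives the corresponding equality \eqref{FreSubEqul} at the regular level. Since Proposition~\ref{PropMorCdrG} already identifies the two coderivatives, the regular and limiting subdifferentials of $\mu$ acquire the same exact description, which is precisely the assertion of \emph{lower regularity} $\widehat\partial\mu(\be)=\partial\mu(\be)$ at $\be$; sufficiency of \eqref{DKMrSbDiff} for $e^*\in\partial\mu(\be)$ then follows by reversing the componentwise reading.

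I expect the main obstacle to be entirely bundled inside Proposition~\ref{PropMorCdrG}, namely the coderivative coincidence $D^*\mG=\widehat D^*\mG$ that relies on the weak-$*$ limiting argument and the dominated-convergence control of the characteristic functions $\chi_{\Omega_1(e_n,u_n)}$. Since that proposition is already proved in the excerpt and may be assumed, the theorem itself reduces to a bookkeeping translation of the Theorem~\ref{ThmUpEstFrSb} argument with $\widehat D^*$ replaced by $D^*$, plus the verbatim citations of the two marginal-function theorems of \cite{MorNmYn09MP}. The only genuinely new point to state carefully is that the selection hypothesis yields lower regularity, which is immediate once both subdifferentials are shown to equal the same set.
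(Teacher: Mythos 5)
Your proposal is correct and follows essentially the same route as the paper: the limiting upper estimate $\partial\mu(\be)\subset\mJ'_e(\ou_{\be},\be)+D^*\mG(\be,\ou_{\be})(\mJ'_u(\ou_{\be},\be))$, the coderivative coincidence from Proposition~\ref{PropMorCdrG}, and the componentwise reading already done in Theorem~\ref{ThmUpEstFrSb}, with the selection hypothesis upgrading the inclusion to an equality and yielding lower regularity. The only discrepancy is bibliographic: the limiting-subdifferential estimate and its equality/lower-regularity counterpart come from \cite[Theorem~7(i) and (iii)]{MorNmYn09MP} rather than Theorems~1 and~2 of that reference (which concern the regular subdifferential), though your sandwich argument via \eqref{FreSubEqul} and $\widehat{\partial}\mu(\be)\subset\partial\mu(\be)$ reaches the same conclusion.
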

\begin{proof}
By our assumptions, $\mJ(u,e)$ is continuously differentiable at
$(\ou_{\be},\be)$, thus $\mJ(u,e)$ is strictly differentiable at
$(\ou_{\be},\be)$ and Lipschitz continuous around $(\ou_{\be},\be)$.
This implies that
$\partial\mJ(\ou_{\be},\be)=\big\{\big(\mJ'_u(\ou_{\be},\be),\mJ'_e(\ou_{\be},\be)\big)\big\}$.
Applying \cite[Theorem~7(i)]{MorNmYn09MP}, we obtain
\begin{equation}\label{UpMrSbMrcdr}
    \partial\mu(\be)\subset\mJ'_e(\ou_{\be},\be)+D^*\mG(\be,\ou_{\be})\big(\mJ'_u(\ou_{\be},\be)\big).
\end{equation}
By Proposition~\ref{PropMorCdrG}, we infer that
$D^*\mG(\be,\ou_{\be})\big(\mJ'_u(\ou_{\be},\be)\big)=\widehat{D}^*\mG(\be,\ou_{\be})\big(\mJ'_u(\ou_{\be},\be)\big)$.
From this and \eqref{UpMrSbMrcdr} we get
\begin{equation}\label{UpMrSbFrCdr}
    \partial\mu(\be)\subset\mJ'_e(\ou_{\be},\be)+\widehat{D}^*\mG(\be,\ou_{\be})\big(\mJ'_u(\ou_{\be},\be)\big).
\end{equation}
By \eqref{UpMrSbFrCdr} and by Theorem~\ref{ThmUpEstFrSb}, we deduce
that \eqref{DKMrSbDiff} is necessary for $e^*\in\partial\mu(\be)$.

If, in addition, $S:\dom\mG\rightrightarrows L^2(\Omega)$ admits a
local upper Lipschitzian selection at $(\be,\ou_{\be})$, then by
\cite[Theorem~7(iii)]{MorNmYn09MP} the marginal function
$\mu(\cdot)$ is lower regular at $\be$ and \eqref{UpMrSbMrcdr} holds
as equality
\begin{equation}\label{EqMrSbMrcdr}
\begin{aligned}
    \partial\mu(\be)
    &=\mJ'_e(\ou_{\be},\be)+D^*\mG(\be,\ou_{\be})\big(\mJ'_u(\ou_{\be},\be)\big)\\
    &=\mJ'_e(\ou_{\be},\be)+\widehat{D}^*\mG(\be,\ou_{\be})\big(\mJ'_u(\ou_{\be},\be)\big).
\end{aligned}
\end{equation}
By \eqref{EqMrSbMrcdr} and by Theorem~\ref{ThmUpEstFrSb},
condition~\eqref{DKMrSbDiff} is also sufficient for
$e^*\in\partial\mu(\be)$. $\hfill\Box$
\end{proof}

\begin{Remark}\rm
From Theorems~\ref{ThmUpEstFrSb}, \ref{ThmEsCmpMrSb} we see that the
necessary conditions~\eqref{DKFrSbDiff} and \eqref{DKMrSbDiff}
coincide because $\mG(\cdot)$ is normally regular at
$(\be,\ou_{\be})$ by \eqref{MorCdrmU}. These necessary conditions
are also the same sufficient conditions provided that
$S:\dom\mG\rightrightarrows L^2(\Omega)$ admits a local upper
Lipschitzian selection at $(\be,\ou_{\be})$, which yields
$\widehat{\partial}\mu(\be)=\partial\mu(\be)$, i.e., the marginal
function $\mu(\cdot)$ is lower regular at $\be$.
\end{Remark}

\begin{Theorem}\label{ThmEsSglrSb}
Assume that the assumptions {\rm\textbf{(A1)}-\textbf{(A3)}} hold
and let $\ou_{\be}\in S(\be)$. Then, we have the following estimate
\begin{equation}\label{UpEsSglrSb}
\begin{aligned}
    \partial^\infty\mu(\be)\subset
    &\Big\{(0,0,e^*_\alpha,e^*_\beta)\in E^*\Bst
           e^*_\alpha+e^*_\beta\in N(\ou_{\be};\mQ),\\
    &\quad e^*_\alpha|_{\Omega_1(\be,\ou_{\be})}\geq0,\,e^*_\alpha|_{\Omega\setminus\Omega_1(\be,\ou_{\be})}=0,\\
    &\quad e^*_\beta|_{\Omega_3(\be,\ou_{\be})}\leq0,\,e^*_\beta|_{\Omega\setminus\Omega_3(\be,\ou_{\be})}=0\Big\}.
\end{aligned}
\end{equation}
\end{Theorem}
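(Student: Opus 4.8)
The plan is to apply the general upper estimate for the singular subdifferential of marginal functions from \cite{MorNmYn09MP} and then to compute the Mordukhovich coderivative of $\mG(\cdot)$ at the zero dual element via Proposition~\ref{PropMorCdrG}. First I would record that, under the assumptions \textbf{(A1)}--\textbf{(A3)}, the cost functional $\mJ$ is of class $\mC^2$ and hence locally Lipschitz continuous around $(\ou_{\be},\be)$. Consequently its singular subdifferential is trivial, $\partial^\infty\mJ(\ou_{\be},\be)=\{(0,0)\}$, and this is the feature that drives the entire argument.

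Next I would invoke \cite[Theorem~7(ii)]{MorNmYn09MP}, the singular-subdifferential companion of the Mordukhovich-subdifferential estimate already used in the proof of Theorem~\ref{ThmEsCmpMrSb}. Since the only contribution coming from the objective is the pair $(0,0)\in\partial^\infty\mJ(\ou_{\be},\be)$, the general inclusion collapses to
\begin{equation*}
    \partial^\infty\mu(\be)\subset D^*\mG(\be,\ou_{\be})(0).
\end{equation*}
This is the conceptual heart of the proof: Lipschitz continuity of $\mJ$ eliminates all objective-dependent terms and reduces the singular subgradient estimate to a single coderivative evaluation of the feasible-set multifunction at $u^*=0$.

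Finally I would substitute $u^*=0$ into formula~\eqref{MorCdrmU} of Proposition~\ref{PropMorCdrG}. The defining relations $u^*=u^*_1-u^*_2$ and $u^*_1=e^*_\alpha+e^*_\beta$ with $u^*_2\in N(\ou_{\be};\mQ)$ then force $e^*_\alpha+e^*_\beta=u^*_2\in N(\ou_{\be};\mQ)$, while the sign and support conditions on $e^*_\alpha$ and $e^*_\beta$ over $\Omega_1(\be,\ou_{\be})$ and $\Omega_3(\be,\ou_{\be})$ carry over unchanged. This yields precisely the right-hand side of \eqref{UpEsSglrSb}. I expect no genuine obstacle here: the substantive computation has already been carried out in Proposition~\ref{PropMorCdrG}, and the only point requiring care is the routine verification that the hypotheses of \cite[Theorem~7(ii)]{MorNmYn09MP}---local closedness of $\gph\mG$ around $(\be,\ou_{\be})$ together with Lipschitz continuity of $\mJ$ in a neighborhood---are satisfied.
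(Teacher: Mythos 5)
Your proposal follows the paper's own argument essentially verbatim: Lipschitz continuity of $\mJ$ around $(\ou_{\be},\be)$ gives $\partial^\infty\mJ(\ou_{\be},\be)=\{(0,0)\}$, the general marginal-function estimate from \cite{MorNmYn09MP} then yields $\partial^\infty\mu(\be)\subset D^*\mG(\be,\ou_{\be})(0)$, and substituting $u^*=0$ into \eqref{MorCdrmU} gives \eqref{UpEsSglrSb}. The only discrepancy is the citation label (the paper invokes \cite[Theorem~7(i)]{MorNmYn09MP} rather than part (ii)), which is immaterial to the argument.
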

\begin{proof}
Our assumptions ensure that $\mJ(u,e)$ is Lipschitz continuous
around $(\ou_{\be},\be)$. Hence, we have
$\partial^\infty\mJ(\ou_{\be},\be)=\{(0,0)\}$. By
\cite[Theorem~7(i)]{MorNmYn09MP}, we deduce that
\begin{equation}\label{EsSgSbMrCdr}
    \partial^\infty\mu(\be)\subset D^*\mG(\be,\ou_{\be})(0).
\end{equation}
By \eqref{EsSgSbMrCdr}, formula~\eqref{UpEsSglrSb} follows directly
from formula~\eqref{MorCdrmU}. $\hfill\Box$
\end{proof}

\begin{Corollary}
Assume that the assumptions {\rm\textbf{(A1)}-\textbf{(A3)}} hold
and let $\ou_{\be}\in S(\be)$. Then, the following hold:
\begin{itemize}
\item[{\rm(i)}] If $\ou_{\be}\in\inter\mQ$, then we have
\begin{equation}\label{UpEsSgSb0}
    \partial^\infty\mu(\be)\subset\{0\}.
\end{equation}
\item[{\rm(ii)}] If $\ou_{\be}\in\inter\mQ$, and there exists a sequence $e_n\to\be$ such that
$\ou_{e_n}\to\ou_{\be}$ in $L^{p_0}(\Omega)$ with $\ou_{e_n}\in
S(e_n)$ and $\widehat{\partial}\mu(e_n)\neq\emptyset$, then we have
\begin{equation}\label{LwEsSgSb0}
    0\in\partial^\infty\mu(\be).
\end{equation}
Consequently, \eqref{UpEsSgSb0} holds as an equality.
\end{itemize}
\end{Corollary}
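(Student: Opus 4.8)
The plan is to prove the two parts separately and then combine them.

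For part (i), I would start from the upper estimate \eqref{UpEsSglrSb} established in Theorem~\ref{ThmEsSglrSb}, which exhibits every element of $\partial^\infty\mu(\be)$ as a tuple $(0,0,e^*_\alpha,e^*_\beta)\in E^*$ subject to the sign and support conditions on $\Omega_1(\be,\ou_{\be})$ and $\Omega_3(\be,\ou_{\be})$ together with $e^*_\alpha+e^*_\beta\in N(\ou_{\be};\mQ)$. The key observation is that, since $\mQ$ is convex and $\ou_{\be}\in\inter\mQ$, the normal cone reduces to $N(\ou_{\be};\mQ)=\{0\}$; hence $e^*_\alpha+e^*_\beta=0$. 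Because $e^*_\alpha$ is supported on $\Omega_1(\be,\ou_{\be})$ and $e^*_\beta$ on $\Omega_3(\be,\ou_{\be})$, while $e^*_\alpha\geq0$ and $e^*_\beta\leq0$, restricting the identity $e^*_\alpha=-e^*_\beta$ to each of $\Omega_1(\be,\ou_{\be})$ and $\Omega_3(\be,\ou_{\be})$ (which are disjoint up to a null set) forces $e^*_\alpha=e^*_\beta=0$. This yields \eqref{UpEsSgSb0}.

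For part (ii), I would work directly from the definition \eqref{SngSubdif} of the singular subdifferential. Fix the given sequence $e_n\to\be$ with $\ou_{e_n}\in S(e_n)$, $\ou_{e_n}\to\ou_{\be}$ in $L^{p_0}(\Omega)$, and $\widehat{\partial}\mu(e_n)\neq\emptyset$, and pick $e^*_n\in\widehat{\partial}\mu(e_n)=\widehat{\partial}_0\mu(e_n)$. First I would check the convergence $e_n\stackrel{\mu}\to\be$ required in \eqref{SngSubdif}: since $\ou_{e_n}\in S(e_n)$ and $\ou_{\be}\in S(\be)$ we have $\mu(e_n)=\mJ(\ou_{e_n},e_n)$ and $\mu(\be)=\mJ(\ou_{\be},\be)$, so the joint continuity of $\mJ$ together with $\ou_{e_n}\to\ou_{\be}$ and $e_n\to\be$ gives $\mu(e_n)\to\mu(\be)$. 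Then I would scale: choosing $\lambda_n=[n(1+\|e^*_n\|_{E^*})]^{-1}$ gives $\lambda_n\downarrow0$ and $\|\lambda_n e^*_n\|_{E^*}\leq 1/n\to0$, so $\lambda_n e^*_n\to0$ strongly, hence weak-$*$. Taking $\varepsilon_n=0$, the triples $(e_n,\varepsilon_n,\lambda_n e^*_n)$ witness $0$ in the Painlev\'e--Kuratowski upper limit \eqref{SngSubdif}, i.e. \eqref{LwEsSgSb0} holds. Combining with part (i) gives $\partial^\infty\mu(\be)=\{0\}$, so \eqref{UpEsSgSb0} is in fact an equality.

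The step I expect to require the most care is the active-set bookkeeping in part (i): one must ensure that the doubly active set $\Omega_1(\be,\ou_{\be})\cap\Omega_3(\be,\ou_{\be})$, where the perturbed lower and upper bounds coincide, is negligible, since otherwise the sign-constrained identity $e^*_\alpha=-e^*_\beta$ admits nonzero solutions there. In part (ii) the only point needing justification beyond routine scaling is the continuity claim $\mu(e_n)\to\mu(\be)$, which is exactly where the hypotheses $\ou_{e_n}\to\ou_{\be}$ and $\ou_{e_n}\in S(e_n)$ enter; the nonemptiness assumption $\widehat{\partial}\mu(e_n)\neq\emptyset$ is what makes the scaled sequence available in the first place.
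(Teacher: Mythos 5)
Your proof is correct, and part (i) follows the paper's route exactly: apply the upper estimate \eqref{UpEsSglrSb}, use $N(\ou_{\be};\mQ)=\{0\}$ from $\ou_{\be}\in\inter\mQ$ and convexity of $\mQ$, and kill $e^*_\alpha,e^*_\beta$ via the sign and support conditions. The subtlety you flag about the doubly active set $\Omega_1(\be,\ou_{\be})\cap\Omega_3(\be,\ou_{\be})$ is a genuine one: on that set the conditions $e^*_\alpha\geq0$, $e^*_\beta\leq0$, $e^*_\alpha+e^*_\beta=0$ admit nonzero solutions, so one really does need $\alpha+\be_\alpha<\beta+\be_\beta$ a.e.\ (equivalently, that this intersection is null) to conclude; the paper's own proof asserts $e^*_\alpha=e^*_\beta=0$ a.e.\ without addressing this, so you are, if anything, more careful here, though you leave the point unresolved rather than adding it as a hypothesis. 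In part (ii) you genuinely diverge from the paper. The paper fixes $\lambda_n=\varepsilon_n=1/n$ and then must prove that the sequence $\he^*_n\in\widehat{\partial}\mu(e_n)$ is \emph{bounded}; for this it invokes the necessary condition \eqref{DKFrSbDiff} at $(e_n,\ou_{e_n})$ together with $\ou_{e_n}\in\inter\mQ$ for large $n$, so the interiority hypothesis does real work there. You instead rescale adaptively, $\lambda_n=[n(1+\|\he^*_n\|_{E^*})]^{-1}$, which forces $\lambda_n\he^*_n\to0$ strongly with no boundedness needed; this is a legitimate choice under the sequential definition \eqref{SngSubdif} and makes the argument both shorter and slightly stronger, since for the inclusion $0\in\partial^\infty\mu(\be)$ alone you only use $\mu(e_n)\to\mu(\be)$ (which you, unlike the paper, actually justify via $\mu(e_n)=\mJ(\ou_{e_n},e_n)$, $\ou_{e_n}\to\ou_{\be}$, and continuity of $\mJ$) and not the interiority of $\ou_{e_n}$; interiority is then needed only to combine with part (i) for the equality. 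This is consistent with the paper's subsequent remark that the lower bound holds without $\ou_{\be}\in\inter\mQ$ when $\widehat{\partial}\mu(\be)\neq\emptyset$.
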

\begin{proof}
(i) Take any $(0,0,e^*_\alpha,e^*_\beta)\in\partial^\infty\mu(\be)$.
Note that $N(\ou_{\be};\mQ)=\{0\}$ because $\ou_{\be}\in\inter\mQ$.
Hence, from \eqref{UpEsSglrSb} it follows that
$e^*_\alpha=e^*_\beta=0$ a.e. on $\Omega$. This yields
\eqref{UpEsSgSb0}.

(ii) Choose $\lambda_n=\varepsilon_n=1/n$ and take any
$\he^*_n\in\widehat{\partial}\mu(e_n)\subset\widehat{\partial}_{\varepsilon_n}\mu(e_n)$
for every $n\in\N$. Since $\he^*_n\in\widehat{\partial}\mu(e_n)$,
$\he^*_n$ holds \eqref{DKFrSbDiff}. Because $\ou_{e_n}\to\ou_{\be}$
and $\ou_{\be}\in\inter\mQ$, we have $\ou_{e_n}\in\inter\mQ$ for all
$n$ large enough. Hence, $N(\ou_{e_n};\mQ)=\{0\}$ for all $n$
sufficiently large. Consequently, according to \eqref{DKFrSbDiff},
$\he^*_n$ must be bounded. Letting $n\to\infty$, we have
\begin{equation}\label{CdLwEsSgSb}
    e_n\to\be,~~\mu(e_n)\to\mu(\be),~~\varepsilon_n\downarrow0,~~
    \lambda_n\downarrow0,~~\lambda_n\he^*_n\stackrel{w^*}\rightharpoonup0,
\end{equation}
which yields $0\in\partial^\infty\mu(\be)$ by \eqref{SngSubdif}.
Combining this with (i) we obtain $\partial^\infty\mu(\be)=\{0\}$.
$\hfill\Box$
\end{proof}

\begin{Remark}\rm
If $\widehat{\partial}\mu(\be)\neq\emptyset$, then \eqref{LwEsSgSb0}
holds without the assumption $\ou_{\be}\in\inter\mQ$. Indeed, take
any $\he^*\in\widehat{\partial}\mu(\be)$ and choose $e_n=\be$,
$\lambda_n=\varepsilon_n=1/n$, $\he^*_n=\he^*$ for every $n\in\N$.
Letting $n\to\infty$, we obtain \eqref{CdLwEsSgSb}, which implies
$0\in\partial^\infty\mu(\be)$.
\end{Remark}

\section{Parametric bang-bang control problems}
\setcounter{equation}{0}

In this section, we consider the parametric control
problem~\eqref{PerProWtCd}, where the functional $J(\cdot)$ is given
in \eqref{OptConPro} with $\zeta=0$ a.e. on $\Omega$,
$\mQ=L^{p_0}(\Omega)$ with $p_0>N/2$ while $q_0=s_0=1$, and
$p_1=p_2=2$, $p_3=p_4=\infty$ in \eqref{DefNormE}. In addition, the
solution map $S:E\rightrightarrows L^1(\Omega)$ is defined by
\eqref{SolMap} with respect to $\mJ(u,e):L^1(\Omega)\times E\to\R$.
For this setting, we rewrite problem~\eqref{PerProWtCd} as follows
\begin{equation}\label{BBConProb}
\begin{cases}
    {\rm Minimize}   & \mJ(u,e)=J(u+e_y)+(e_J,y_{u+e_y})_{L^2(\Omega)}\\
    {\rm subject~to} & u\in\mU_{ad}(e),
\end{cases}
\end{equation}
where $y_{u+e_y}$ is the weak solution of \eqref{PerStaEqWtCd}, and
the functional $J(\cdot)$ is defined by
$$J(u)=\displaystyle\int_\Omega L\big(x,y_u(x)\big)dx.$$
Note that we have $\mU_{ad}(e)\subset L^\infty(\Omega)$ for every
$e\in E=L^2(\Omega)\times L^2(\Omega)\times L^\infty(\Omega)\times
L^\infty(\Omega)$.

In contrast to the previous section, the cost functional
$\mJ:L^1(\Omega)\times E\to\R$ of problem~\eqref{BBConProb} is not
Fr\'echet differentiable. In addition, the problem of computing
$\widehat{\partial}^+\mJ(\ou_{\be},\be)$ or checking
$\widehat{\partial}^+\mJ(\ou_{\be},\be)\neq\emptyset$ at a given
point $(\ou_{\be},\be)\in\gph S$ remains open. Therefore, we can not
apply \cite[Theorems~1 and 2]{MorNmYn09MP} to compute/estimate
subdifferentials of the marginal function $\mu(\cdot)$ of
problem~\eqref{BBConProb}. For this reason, by the definition of
regular subgradients we will establish directly a characterization
of regular subgradients of the marginal function $\mu(\cdot)$ at a
given point $(\be,\ou_{\be})\in\gph S$ in a subspace $E^*_1$ (see
the definition of $E^*_1$ below) of $E^*$ via local upper
H\"{o}lderian selections of the solution map
$S:\dom\mG\rightrightarrows L^1(\Omega)$ at the point
$(\be,\ou_{\be})$. This leads to some lower estimates for the
regular subdifferential of $\mu(\cdot)$ at $(\be,\ou_{\be})$.

Consider problem~\eqref{OptConPro} with $\mU_{ad}$ being replaced by
$\mU_{ad}(\be)$ and let $\ou_{\be}\in\mU_{ad}(\be)$ be a solution of
problem~\eqref{OptConPro} in the sense of $L^{p_0}(\Omega)$. From
\eqref{VarIneq}, we deduce that
\begin{equation}\label{ValOuVarp}
\ou_{\be}(x)=\begin{cases}
            (\alpha+\be_\alpha)(x),  &\mbox{if}\ \varphi_{\ou_{\be}}(x)>0\\
            (\beta+\be_\beta)(x),    &\mbox{if}\ \varphi_{\ou_{\be}}(x)<0,
\end{cases}
\end{equation}
and
\begin{equation}\label{ValVarpOu}
\varphi_{\ou_{\be}}(x)\begin{cases}
            \geq0,   &\mbox{if}\ \ou_{\be}(x)=(\alpha+\be_\alpha)(x)\\
            \leq0,   &\mbox{if}\ \ou_{\be}(x)=(\beta+\be_\beta)(x)\\
            =0,      &\mbox{if}\ \ou_{\be}(x)\in\big((\alpha+\be_\alpha)(x),(\beta+\be_\beta)(x)\big).
\end{cases}
\end{equation}
In general, solutions $\ou_{\be}$ have the so-called \emph{bang-bang
property}: for a.a. $x\in\Omega$, it holds that
$\ou_{\be}(x)\in\{(\alpha+\be_\alpha)(x),(\beta+\be_\beta)(x)\}$.
Consider the case where $\{x\in\Omega\st\varphi_{\ou_{\be}}(x)=0\}$
has a zero Lebesgue measure. Then, it follows from \eqref{ValOuVarp}
and \eqref{ValVarpOu} that
$\ou_{\be}(x)\in\{\alpha+\be_\alpha)(x),(\beta+\be_\beta)(x)\}$ for
a.a. $x\in\Omega$, i.e., $\ou_{\be}$ is a bang-bang control. In this
section, we are interested in the last property of the reference
control $\ou_{\be}$.

\subsection{Local upper H\"{o}lderian selections of solution map}

According to \cite{CaWaWa17SICON}, sufficient second-order
optimality conditions for bang-bang controls $\ou_{\be}$ of
problem~\eqref{OptConPro} with respect to the admissible control set
$\mU_{ad}(\be)$ established under the following assumption
\textbf{(A4)} posed on the adjoint state $\varphi_{\ou_{\be}}$ in
the case where $\{x\in\Omega\st\varphi_{\ou_{\be}}(x)=0\}$ has a
zero Lebesgue measure.

\textbf{(A4)} Assume that $\ou_{\be}\in\mU_{ad}(\be)$, and it
satisfies the system \eqref{StateEqSol}-\eqref{VarIneq} and the
following condition
\begin{equation}\label{AsmAdVrphi}
    \exists K>0,\exists\ae>0\ \mbox{such that}\ \big|\{x\in\Omega:|\varphi_{\ou_{\be}}(x)|\leq\varepsilon\}\big|
    \leq K\varepsilon^{\ae},\ \forall\varepsilon>0.
\end{equation}
In \eqref{AsmAdVrphi}, the notation $|\cdot|$ stands for the
Lebesgue measure.

\begin{Proposition}{\rm(See \cite[Proposition~3.2]{QuiWch17})}\label{PropFstCd}
Assume that {\rm\textbf{(A1)}-\textbf{(A4)}} hold at
$\ou_{\be}\in\mU_{ad}(\be)$. Then, there exists $\kappa>0$ such that
\begin{equation}\label{FOrdL1}
    J'(\ou_{\be})(u-\ou_{\be})\geq\kappa\|u-\ou_{\be}\|^{1+\frac{1}{\ae}}_{L^1(\Omega)},\ \forall u\in\mU_{ad}(\be).
\end{equation}
\end{Proposition}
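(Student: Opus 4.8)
The plan is to make the directional derivative explicit and then exploit the bang-bang structure of $\ou_{\be}$ together with the measure estimate \textbf{(A4)}. Since $\zeta=0$ throughout this section, formula~\eqref{FDeriCost} gives $J'(\ou_{\be})(u-\ou_{\be})=\int_\Omega\varphi_{\ou_{\be}}(u-\ou_{\be})\,dx$. First I would read off from \eqref{ValOuVarp}--\eqref{ValVarpOu} that wherever $\varphi_{\ou_{\be}}(x)>0$ one has $\ou_{\be}(x)=(\alpha+\be_\alpha)(x)$, hence $u(x)-\ou_{\be}(x)\ge0$, and wherever $\varphi_{\ou_{\be}}(x)<0$ one has $\ou_{\be}(x)=(\beta+\be_\beta)(x)$, hence $u(x)-\ou_{\be}(x)\le0$. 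In either case the product satisfies $\varphi_{\ou_{\be}}(x)\big(u(x)-\ou_{\be}(x)\big)=|\varphi_{\ou_{\be}}(x)|\,|u(x)-\ou_{\be}(x)|\ge0$. Letting $\varepsilon\downarrow0$ in \eqref{AsmAdVrphi} shows $\big|\{\varphi_{\ou_{\be}}=0\}\big|=0$, so this identity holds almost everywhere and $J'(\ou_{\be})(u-\ou_{\be})=\int_\Omega|\varphi_{\ou_{\be}}|\,|u-\ou_{\be}|\,dx$.

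The key step is a threshold argument. For a level $\varepsilon>0$, split $\Omega$ into $\{|\varphi_{\ou_{\be}}|>\varepsilon\}$ and $\{|\varphi_{\ou_{\be}}|\le\varepsilon\}$; on the former I would bound the integrand below by $\varepsilon\,|u-\ou_{\be}|$ and simply drop the nonnegative contribution from the latter. Writing $r:=\|u-\ou_{\be}\|_{L^1(\Omega)}$, this gives $J'(\ou_{\be})(u-\ou_{\be})\ge\varepsilon\int_{\{|\varphi_{\ou_{\be}}|>\varepsilon\}}|u-\ou_{\be}|\,dx=\varepsilon\big(r-\int_{\{|\varphi_{\ou_{\be}}|\le\varepsilon\}}|u-\ou_{\be}|\,dx\big)$. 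Because $u,\ou_{\be}\in\mU_{ad}(\be)$ and $\alpha,\beta,\be_\alpha,\be_\beta\in L^\infty(\Omega)$, the pointwise bound $|u-\ou_{\be}|\le M$ holds with $M:=\|(\beta-\alpha)+(\be_\beta-\be_\alpha)\|_{L^\infty(\Omega)}$, so \eqref{AsmAdVrphi} yields $\int_{\{|\varphi_{\ou_{\be}}|\le\varepsilon\}}|u-\ou_{\be}|\,dx\le M\,\big|\{|\varphi_{\ou_{\be}}|\le\varepsilon\}\big|\le MK\varepsilon^{\ae}$. Combining the two estimates produces $J'(\ou_{\be})(u-\ou_{\be})\ge\varepsilon r-MK\varepsilon^{1+\ae}$.

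Finally I would optimize in $\varepsilon$. The choice $\varepsilon=\big(r/(2MK)\big)^{1/\ae}$ forces $MK\varepsilon^{\ae}=r/2$, so the lower bound collapses to $\varepsilon\cdot(r/2)=\frac{1}{2}(2MK)^{-1/\ae}\,r^{1+\frac{1}{\ae}}$, which is precisely \eqref{FOrdL1} with $\kappa:=\frac{1}{2}(2MK)^{-1/\ae}$. I expect the main obstacle to be bookkeeping rather than conceptual: the threshold $\varepsilon$ must be chosen as a function of $r$ alone so that \eqref{AsmAdVrphi} may be invoked with that very $\varepsilon$ for each admissible $u$, and the resulting $\kappa$ must be uniform over all $u\in\mU_{ad}(\be)$. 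Uniformity is not an issue, since the derivation delivers the bound for every value of $r\ge0$; boundedness of $\mU_{ad}(\be)$ in $L^\infty(\Omega)$ and the super-linear exponent $1+\frac{1}{\ae}>1$ guarantee that no separate treatment of controls far from $\ou_{\be}$ is required.
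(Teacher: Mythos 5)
Your proof is correct and is essentially the argument the paper relies on: the paper gives no proof of its own but cites \cite[Proposition~3.2]{QuiWch17}, whose proof is exactly this scheme of rewriting $J'(\ou_{\be})(u-\ou_{\be})=\int_\Omega|\varphi_{\ou_{\be}}|\,|u-\ou_{\be}|\,dx$ via the sign information from the first-order condition, splitting at the level set $\{|\varphi_{\ou_{\be}}|\le\varepsilon\}$, invoking \textbf{(A4)} together with the $L^\infty$-bound on $u-\ou_{\be}$, and optimizing the threshold $\varepsilon$ in terms of $\|u-\ou_{\be}\|_{L^1(\Omega)}$. (A minor remark: the measure-zero argument for $\{\varphi_{\ou_{\be}}=0\}$ is not needed, since on that set both $\varphi_{\ou_{\be}}(u-\ou_{\be})$ and $|\varphi_{\ou_{\be}}|\,|u-\ou_{\be}|$ vanish anyway.)
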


For each $\ou_{\be}\in\mU_{ad}(\be)$ and $\tau\geq0$, $1\leq
p\leq\infty$, we define the cone
\begin{equation}\label{ExCriConce}
C^\tau_{\ou_{\be},p}=\left\{v\in L^p(\Omega)\Bigg\st v(x)
\begin{cases}
    \geq0\quad\mbox{if}\ \ou_{\be}(x)=(\alpha+\be_\alpha)(x)\\
    \leq0\quad\mbox{if}\ \ou_{\be}(x)=(\beta+\be_\beta)(x)\\
       =0\quad\mbox{if}\ |\varphi_{\ou_{\be}}(x)|>\tau
\end{cases}\right\}.
\end{equation}
The forthcoming theorem provides us with a second-order sufficient
condition for bang-bang controls $\ou_{\be}\in\mU_{ad}(\be)$ to be
optimal for problem~\eqref{BBConProb} with respect to $\be\in E$.

\begin{Theorem}{\rm(See \cite[Theorem~3.1]{QuiWch17})}\label{ThmSSC}
Let $\ou_{\be}\in\mU_{ad}(\be)$ be a feasible bang-bang control for
problem~\eqref{BBConProb} satisfying
{\rm\textbf{(A1)}-\textbf{(A4)}}. Assume that there exist $\delta>0$
and $\tau>0$ such that
\begin{equation}\label{SOrdCd}
    J''(\ou_{\be})v^2\geq\delta\|z_v\|^2_{L^2(\Omega)},~\forall v\in C^\tau_{\ou_{\be},2},
\end{equation}
where $z_v=G'(\ou_{\be})v$ is the solution of \eqref{EqSolZuv} for
$y=y_{\ou_{\be}}$. Then, there exists $\varepsilon>0$ such that
\begin{equation}\label{GrOpCd}
    J(\ou_{\be})+\frac{\kappa}{2}\|u-\ou_{\be}\|^{1+\frac{1}{\ae}}_{L^1(\Omega)}
                +\frac{\delta}{8}\|z_{u-\ou_{\be}}\|^2_{L^2(\Omega)}
    \leq J(u),\ \forall u\in\mU_{ad}(\be)\cap\oB^2_\varepsilon(\ou_{\be}),
\end{equation}
with $z_{u-\ou_{\be}}=G'(\ou_{\be})(u-\ou_{\be})$ and $\kappa$ being
given in Proposition~\ref{PropFstCd}.
\end{Theorem}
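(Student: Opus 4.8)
The plan is to argue by contradiction, in the spirit of \cite[Theorem~3.1]{QuiWch17}. Suppose \eqref{GrOpCd} fails for every $\varepsilon>0$. Then there is a sequence $u_k\in\mU_{ad}(\be)$ with $u_k\neq\ou_{\be}$ and $u_k\to\ou_{\be}$ in $L^2(\Omega)$ such that
$$J(u_k)<J(\ou_{\be})+\frac{\kappa}{2}\|u_k-\ou_{\be}\|^{1+\frac{1}{\ae}}_{L^1(\Omega)}+\frac{\delta}{8}\|z_{u_k-\ou_{\be}}\|^2_{L^2(\Omega)}.$$
Writing a second-order Taylor expansion of the $C^2$ functional $J$ at $\ou_{\be}$, and inserting the first-order estimate $J'(\ou_{\be})(u_k-\ou_{\be})\geq\kappa\|u_k-\ou_{\be}\|^{1+\frac1\ae}_{L^1(\Omega)}$ from Proposition~\ref{PropFstCd}, I would reduce this to
$$\frac{\kappa}{2}\|u_k-\ou_{\be}\|^{1+\frac1\ae}_{L^1(\Omega)}+\frac12 J''(w_k)(u_k-\ou_{\be})^2<\frac{\delta}{8}\|z_{u_k-\ou_{\be}}\|^2_{L^2(\Omega)},$$
where $w_k$ lies on the segment between $\ou_{\be}$ and $u_k$.

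Next I would normalize: set $\rho_k=\|u_k-\ou_{\be}\|_{L^2(\Omega)}\to0$ and $v_k=(u_k-\ou_{\be})/\rho_k$, so that $\|v_k\|_{L^2(\Omega)}=1$, and pass to a subsequence with $v_k\rightharpoonup v$ weakly in $L^2(\Omega)$. A convenient structural point is that, since $\ou_{\be}$ is bang-bang, it equals one of the two bounds at almost every $x$; hence admissibility of $u_k$ forces each $v_k$ to satisfy the sign conditions defining $C^\tau_{\ou_{\be},2}$ pointwise, and by weak closedness of the associated order cones the limit $v$ inherits them. To obtain the remaining requirement $v|_{\{|\varphi_{\ou_{\be}}|>\tau\}}=0$, I would use that on $\{|\varphi_{\ou_{\be}}|>\tau\}$ the integrand of $J'(\ou_{\be})(u_k-\ou_{\be})=\int_\Omega\varphi_{\ou_{\be}}(u_k-\ou_{\be})\,dx$ is bounded below by $\tau|u_k-\ou_{\be}|$; combined with the reduced inequality this shows $\|v_k\|_{L^1(\{|\varphi_{\ou_{\be}}|>\tau\})}\to0$, whence $v=0$ there. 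Thus $v\in C^\tau_{\ou_{\be},2}$.

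I would then pass to the limit using that $G'(\ou_{\be})\colon L^2(\Omega)\to L^2(\Omega)$ is compact (elliptic regularity for \eqref{EqSolZuv} together with the compact embedding $H^1_0\hookrightarrow L^2$): the linearized states converge strongly, $z_{v_k}\to z_v$ in $L^2(\Omega)$, and by the continuity of the second derivatives in \textbf{(A1)}-\textbf{(A2)} the quadratic form passes to the limit, $J''(w_k)v_k^2\to J''(\ou_{\be})v^2$. Dividing the reduced inequality by $\rho_k^2$ and letting $k\to\infty$ yields $\tfrac12 J''(\ou_{\be})v^2\le\tfrac{\delta}{8}\|z_v\|^2_{L^2(\Omega)}$, which together with the second-order condition \eqref{SOrdCd} applied to $v\in C^\tau_{\ou_{\be},2}$ forces $z_v=0$, and hence $v=0$ since the linearized operator is injective.

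The main obstacle is the final contradiction. Because $J''(\ou_{\be})$ sees the control only through the compactly generated state $z_v$, the quadratic form is weakly continuous, so a unit-norm sequence $v_k$ may have weak limit $v=0$ and the normalized inequality degenerates to $0\le0$: no coercivity is available in the control space itself. The technical heart of \cite[Theorem~3.1]{QuiWch17} is to recover a contradiction with $\|v_k\|_{L^2(\Omega)}=1$ by balancing the two growth terms at their proper scales—the first-order $L^1$-growth of order $1+\tfrac1\ae$ controlling the portion of $v_k$ on $\{|\varphi_{\ou_{\be}}|>\tau\}$ through the measure estimate \eqref{AsmAdVrphi}, and the second-order condition controlling the portion on $\{|\varphi_{\ou_{\be}}|\le\tau\}$—together with the elementary bang-bang relation $\|u_k-\ou_{\be}\|^2_{L^2(\Omega)}\le\|\beta+\be_\beta-\alpha-\be_\alpha\|_{L^\infty(\Omega)}\,\|u_k-\ou_{\be}\|_{L^1(\Omega)}$ linking the $L^2$ and $L^1$ scales. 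Carrying out this balance is where the real work lies, and I would import it from \cite{QuiWch17}.
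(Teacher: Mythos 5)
The paper never proves this theorem: it is imported verbatim from \cite[Theorem~3.1]{QuiWch17}, so there is no internal proof to compare against. Judged on its own terms, your proposal has a genuine gap, and it is precisely the one you diagnose yourself in your last paragraph. The contradiction-plus-normalization scheme ($v_k=(u_k-\ou_{\be})/\|u_k-\ou_{\be}\|_{L^2(\Omega)}$, $v_k\rightharpoonup v$) cannot close for bang-bang problems: since $\zeta=0$, the quadratic form $J''$ acts on the control only through the compact operator $G'(\ou_{\be})$, so it is sequentially weakly continuous along the normalized sequence; the weak limit $v$ may well be $0$, in which case your limiting inequality reads $0\le 0$ and no contradiction with $\|v_k\|_{L^2(\Omega)}=1$ is available. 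Deferring ``the balance'' to \cite{QuiWch17} does not repair this scheme, because the argument there (as in \cite{Cas12SICON,CaWaWa17SICON}, and as in the proof of Theorem~\ref{ThmStabKKT} in this very paper) is a \emph{direct} estimate valid for every admissible $u$ in a small ball, not a compactness/contradiction argument; the normalization route is structurally blocked, not merely technically delicate.

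The missing idea is the pointwise decomposition $u-\ou_{\be}=v+w$ with $v=\chi_{\{|\varphi_{\ou_{\be}}|\le\tau\}}(u-\ou_{\be})\in C^\tau_{\ou_{\be},2}$ and $w=\chi_{\{|\varphi_{\ou_{\be}}|>\tau\}}(u-\ou_{\be})$, applied to an arbitrary admissible $u$ near $\ou_{\be}$, combined with a Taylor expansion $J(u)=J(\ou_{\be})+J'(\ou_{\be})(u-\ou_{\be})+\tfrac12 J''(u_\theta)(u-\ou_{\be})^2$. One splits the first-order term as $J'(\ou_{\be})(u-\ou_{\be})\ge\tfrac{\kappa}{2}\|u-\ou_{\be}\|_{L^1(\Omega)}^{1+\frac{1}{\ae}}+\tfrac{\tau}{2}\|w\|_{L^1(\Omega)}$ (half of Proposition~\ref{PropFstCd}, half of the sign structure $\varphi_{\ou_{\be}}(u-\ou_{\be})=|\varphi_{\ou_{\be}}|\,|u-\ou_{\be}|$), applies \eqref{SOrdCd} to the $v$-part of the quadratic form, and absorbs the cross terms and the $w$-part using $\|z_w\|_{L^2(\Omega)}\le c\|w\|_{L^1(\Omega)}$ together with the reserved $\tfrac{\tau}{2}\|w\|_{L^1(\Omega)}$ (since $\|w\|_{L^1(\Omega)}^2$ is negligible against $\|w\|_{L^1(\Omega)}$ for $u$ close to $\ou_{\be}$) and the continuity of $J''$ in the sense of Lemma~\ref{LemCas27}. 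Comparing the resulting lower bound with $\|z_{u-\ou_{\be}}\|^2_{L^2(\Omega)}\le 2\|z_v\|^2_{L^2(\Omega)}+2\|z_w\|^2_{L^2(\Omega)}$ yields \eqref{GrOpCd} directly, with the constants $\tfrac{\kappa}{2}$ and $\tfrac{\delta}{8}$ emerging from these splittings. Your sketch contains several of the right ingredients (the sign conditions on $v_k$, the bound $J'(\ou_{\be})(u-\ou_{\be})\ge\tau\|w\|_{L^1(\Omega)}$, the compactness of $G'$), but they are assembled into a framework that cannot deliver the conclusion.
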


In what follows, assumption~\textbf{(A4)} and
condition~\eqref{SOrdCd} will play a crucial role to prove the
existence of local upper H\"{o}lderian selections of the solution
map $S(\cdot)$.

Recall that a vector $v$ is an \emph{extremal point} of a set
$\Theta$ in a Banach space $X$ if and only if $v=\lambda
v_1+(1-\lambda)v_2$ with $v_1,v_2\in\Theta$ and $0<\lambda<1$
entails $v_1=v_2=v$. We will denote the closed convex hull of
$\Theta$ by $\overline{\conv}\,\Theta$.

\begin{Theorem}\label{ThmVisinL1}{\rm(See \cite[Theorem~1]{Visin84CPDE})}
Assume that $u_n\rightharpoonup u$ in $L^1(\Omega)$ and $u(x)$ is an
extremal point of
$\Theta(x):=\overline{\conv}\big(\{u_n(x)\}_{n\in\N}\cup\{u(x)\}\big)$
for a.a. $x\in\Omega$. Then, $u_n\to u$ in $L^1(\Omega)$.
\end{Theorem}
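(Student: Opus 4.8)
The plan is to exploit that the functions here are scalar-valued, so each fibre $\Theta(x)\subset\R$ is an interval whose only extremal points are its endpoints (its minimum and its maximum, when they exist). Since $u(x)$ together with every $u_n(x)$ belongs to $\Theta(x)$, the hypothesis that $u(x)$ is extremal forces, for a.a.\ $x\in\Omega$, that $u(x)=\min\Theta(x)$ or $u(x)=\max\Theta(x)$; equivalently, that either $u_n(x)\ge u(x)$ for all $n$, or $u_n(x)\le u(x)$ for all $n$. The whole argument rests on this conversion of pointwise extremality into a uniform sign condition on $u_n-u$.

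Accordingly, I would introduce the set
$$A=\big\{x\in\Omega\bst u_n(x)\ge u(x)\ \text{for all}\ n\in\N\big\},$$
which is measurable, being a countable intersection of the measurable sets $\{x\bst u_n(x)\ge u(x)\}$. By the dichotomy above, $u_n(x)\le u(x)$ for all $n$ at a.a.\ $x\in\Omega\setminus A$, so that $|u_n-u|=u_n-u$ a.e.\ on $A$ and $|u_n-u|=u-u_n$ a.e.\ on $\Omega\setminus A$.

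Next I would test the weak convergence $u_n\rightharpoonup u$ against the functions $\chi_A$ and $\chi_{\Omega\setminus A}$, which are bounded and measurable, hence belong to $L^\infty(\Omega)=\big(L^1(\Omega)\big)^*$. This gives $\int_A(u_n-u)\,dx\to0$ and $\int_{\Omega\setminus A}(u_n-u)\,dx\to0$, whence, using the sign identities,
$$\|u_n-u\|_{L^1(\Omega)}=\int_A(u_n-u)\,dx+\int_{\Omega\setminus A}(u-u_n)\,dx\longrightarrow0,$$
which is the desired strong convergence.

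The genuinely delicate step is the first one: checking that extremality of $u(x)$ in $\Theta(x)$ is equivalent to $u(x)$ being an endpoint of the interval $\Theta(x)$, that the resulting set $A$ is measurable, and that $A$ and $\Omega\setminus A$ carry the correct signs up to a null set. Once this scalar reduction is in place the remaining estimates are immediate, so I expect the reduction itself to be the main obstacle. I note that it is special to the scalar setting: for vector-valued $u_n$ the endpoint description breaks down, and one must instead generate a Young measure $\nu_x$ from $\{u_n\}$, show its barycenter equals $u(x)$ and its support lies in $\Theta(x)$, deduce $\nu_x=\delta_{u(x)}$ from extremality, and then recover $L^1$-convergence through uniform integrability (Dunford--Pettis) together with Vitali's theorem.
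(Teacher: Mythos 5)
Your argument is correct, but it is genuinely different from what the paper does: the paper offers no proof at all for this statement, simply importing it as Theorem~1 of Visintin \cite{Visin84CPDE}, where it is established for vector-valued functions in $L^1(\Omega;\R^N)$ by exactly the Young-measure route you sketch at the end (barycenter equals $u(x)$, support in $\Theta(x)$, extremality forces $\nu_x=\delta_{u(x)}$, then Dunford--Pettis and Vitali). Your scalar reduction is a valid elementary substitute: since $\Theta(x)\subset\R$ is the interval $[\inf S(x),\sup S(x)]$ with $S(x)=\{u_n(x)\}_{n\in\N}\cup\{u(x)\}$, and $u(x)\in S(x)$, extremality of $u(x)$ does force $u(x)=\min\Theta(x)$ or $u(x)=\max\Theta(x)$, i.e.\ a uniform-in-$n$ sign for $u_n(x)-u(x)$ at a.a.\ $x$; the set $A=\bigcap_{n}\{x\st u_n(x)\ge u(x)\}$ is measurable, the sign identities $|u_n-u|=(u_n-u)\chi_A+(u-u_n)\chi_{\Omega\setminus A}$ hold a.e., and pairing the weak convergence with the two fixed test functions $\chi_A,\chi_{\Omega\setminus A}\in L^\infty(\Omega)$ gives $\|u_n-u\|_{L^1(\Omega)}\to0$. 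What each approach buys: Visintin's argument covers the vector-valued and more general strictly-convex settings, whereas yours is confined to scalar $L^1(\Omega)$ --- but that is all this paper ever uses (Lemma~\ref{LemBBPrp} applies the theorem only to real-valued control differences, where the extremal point is an endpoint of an order interval), so your proof would make the paper self-contained at this point at essentially no cost.
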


We will use this theorem to lift weak convergence to strong convergence.

\begin{Lemma}\label{LemBBPrp}
 Let $\ou_{\be}$ be bang-bang, i.e.,  $\ou_{\be}(x) \in \{\alpha(x) + \be_\alpha(x), \beta(x) + \be_\beta(x)\}$
 for almost all $x\in \Omega$.
 Let $e_n\to\be$ in $E$ and choose $u_n \in \mU_{ad}(e_n)$ such that $u_n \rightharpoonup \ou_{\be}$ in $L^1(\Omega)$.
 Then $u_n \to \ou_{\be}$ in $L^1(\Omega)$.
\end{Lemma}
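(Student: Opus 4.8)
The plan is to reduce everything to Visintin's theorem (Theorem~\ref{ThmVisinL1}) by checking that $\ou_{\be}(x)$ is an extremal point of the closed convex hull $\overline{\conv}\big(\{u_n(x)\}_{n\in\N}\cup\{\ou_{\be}(x)\}\big)$ for almost every $x\in\Omega$. In $\R$ this hull is an interval, so its extremal points are precisely its two endpoints; since $\ou_{\be}$ is bang-bang, $\ou_{\be}(x)$ equals either $(\alpha+\be_\alpha)(x)$ or $(\beta+\be_\beta)(x)$, and it would be extremal provided every $u_n(x)$ lay in the limiting interval $[(\alpha+\be_\alpha)(x),(\beta+\be_\beta)(x)]$. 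The obstacle---and the only real difficulty---is that $u_n$ is feasible only for the \emph{perturbed} bounds $\alpha+(e_n)_\alpha$ and $\beta+(e_n)_\beta$, so $u_n(x)$ may fall slightly outside the limiting interval and push the infimum (resp.\ supremum) of the hull strictly below (resp.\ above) $\ou_{\be}(x)$, destroying extremality. Applying Visintin directly to $\{u_n\}$ therefore fails.

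To overcome this I would replace $u_n$ by its pointwise projection onto the limiting feasible interval,
\begin{equation*}
    \wu_n(x):=\min\big\{\max\{u_n(x),(\alpha+\be_\alpha)(x)\},(\beta+\be_\beta)(x)\big\},
\end{equation*}
which is admissible for the reference parameter, i.e.\ $\wu_n\in\mU_{ad}(\be)$. Because $p_3=p_4=\infty$ in this section, $e_n\to\be$ in $E$ forces $(e_n)_\alpha\to\be_\alpha$ and $(e_n)_\beta\to\be_\beta$ in $L^\infty(\Omega)$. A short case analysis using $(\alpha+(e_n)_\alpha)(x)\le u_n(x)\le(\beta+(e_n)_\beta)(x)$ then yields the uniform bound $\|\wu_n-u_n\|_{L^\infty(\Omega)}\le\max\{\|\be_\alpha-(e_n)_\alpha\|_{L^\infty(\Omega)},\|\be_\beta-(e_n)_\beta\|_{L^\infty(\Omega)}\}\to0$: whenever $u_n(x)<(\alpha+\be_\alpha)(x)$ the projection moves the value up by $(\alpha+\be_\alpha)(x)-u_n(x)\le(\be_\alpha-(e_n)_\alpha)(x)$, and symmetrically at the upper bound, while otherwise $\wu_n(x)=u_n(x)$. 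Since $\Omega$ is bounded this gives $\wu_n-u_n\to0$ in $L^1(\Omega)$, whence $\wu_n\rightharpoonup\ou_{\be}$ in $L^1(\Omega)$ as well.

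It then remains to run Visintin on the corrected sequence. By construction $\wu_n(x)\in[(\alpha+\be_\alpha)(x),(\beta+\be_\beta)(x)]$ for a.a.\ $x$, so the hull $\overline{\conv}\big(\{\wu_n(x)\}_{n\in\N}\cup\{\ou_{\be}(x)\}\big)$ is contained in that interval while still containing $\ou_{\be}(x)$; as $\ou_{\be}(x)$ is one of the two endpoints, it is the minimum or the maximum of the hull and hence an extremal point for almost every $x$. Theorem~\ref{ThmVisinL1} then gives $\wu_n\to\ou_{\be}$ strongly in $L^1(\Omega)$, and combining this with $\wu_n-u_n\to0$ in $L^1(\Omega)$ yields $u_n\to\ou_{\be}$ in $L^1(\Omega)$, as desired. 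The only points to verify carefully in the write-up are the measurability and well-definedness of $\wu_n$ (immediate from $\alpha+\be_\alpha\le\beta+\be_\beta$ a.e., which holds since $\ou_{\be}\in\mU_{ad}(\be)$) and the elementary projection estimate above; everything else is routine.
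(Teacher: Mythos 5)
Your proof is correct. It rests on the same key tool as the paper's proof (Visintin's Theorem~\ref{ThmVisinL1}) and on the same underlying idea --- correct $u_n$ by a quantity that is small in $L^\infty(\Omega)$, hence negligible in $L^1(\Omega)$, so that extremality of the pointwise limit is restored --- but the implementation is genuinely different. The paper shifts additively: on $\Omega_1(\be,\ou_\be)$ it considers $v_n:=u_n-\ou_\be-((e_\alpha)_n-\be_\alpha)\ge0$, notes $v_n\rightharpoonup0$ in $L^1$, and applies Visintin with limit $0$ (extremal as the left endpoint of a hull contained in $[0,\infty)$); it then repeats the argument on $\Omega_3(\be,\ou_\be)$ and invokes the bang-bang property only to conclude $\Omega=\Omega_1(\be,\ou_\be)\cup\Omega_3(\be,\ou_\be)$ up to a null set. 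You instead project $u_n$ onto $\mU_{ad}(\be)$, which handles both bounds simultaneously and lets you apply Visintin once, globally, with limit $\ou_\be$ itself; here the bang-bang property enters to make $\ou_\be(x)$ an endpoint of the hull $\overline{\conv}\big(\{\wu_n(x)\}_{n\in\N}\cup\{\ou_\be(x)\}\big)\subset[(\alpha+\be_\alpha)(x),(\beta+\be_\beta)(x)]$. Your route avoids splitting the domain and invokes Visintin only once, at the modest cost of the projection estimate $\|\wu_n-u_n\|_{L^\infty(\Omega)}\le\|e_n-\be\|_E\to0$ (valid precisely because $p_3=p_4=\infty$ in this section); the paper's shift is more economical pointwise but needs the decomposition into the two active sets. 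The two points you flag for the write-up --- well-definedness of the projection from $\alpha+\be_\alpha\le\beta+\be_\beta$ a.e., and the case analysis behind the $L^\infty$ bound using $u_n\in\mU_{ad}(e_n)$ --- are indeed the only details requiring care, and both go through.
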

\begin{proof}
On the active set $\Omega_1(\be,\ou_\be)$ it holds
$\ou_\be=\alpha+\be_\alpha$, cf., \eqref{DjntSetsOm}, which implies
$u_n-\ou_\be-((e_\alpha)_n-\be_\alpha) \ge0$ on this subset. In
addition, $u_n-\ou_\be-((e_\alpha)_n-\be_\alpha)\rightharpoonup0$ in
$L^1(\Omega)$. Then by Theorem~\ref{ThmVisinL1} we conclude
$u_n-\ou_\be-((e_\alpha)_n-\be_\alpha)\to0$ in
$L^1(\Omega_1(\be,\ou_\be))$, which implies $u_n\to\ou_\be$  in
$L^1(\Omega_1(\be,\ou_\be))$. Similarly, we find $u_n\to\ou_\be$ in
$L^1(\Omega_3(\be,\ou_\be))$. Since $\ou_\be$ is bang-bang, it holds
$\Omega=\Omega_1(\be,\ou_\be)\cup\Omega_3(\be,\ou_\be)$, which
proves the claim. $\hfill\Box$
\end{proof}

\medskip
A straightforward application of the above Theorem \ref{ThmVisinL1}
would require to assume that $\ou_{\be}(x)$ is an extremal point of
the set
$\overline{\conv}\big(\{u_n(x)\}_{n\in\N}\cup\{\ou_\be(x)\}\big)$
for a.a. $x\in\Omega$. This cannot be guaranteed as the control
bounds are perturbed, so $\ou_{\be}(x)=\alpha(x)+\be_\alpha(x)$ does
not imply $\ou_{\be}(x)\leq u_n(x)$.

Note that similarly to Theorem~\ref{ThmAxExSol}, under the
assumptions {\rm\textbf{(A1)}-\textbf{(A3)}} we can show that for
any $\ou_{\be}\in S(\be)$ and for every $e\in E$ near $\be$ enough
the problem of minimizing the cost functional $\mJ(u,e)$ subject to
$u\in\mU_{ad}(e)\cap\oB^{p_0}_{\varepsilon}(\ou_{\be})$ has at least
one global solution $\ou_e$, where
$\oB^{p_0}_{\varepsilon}(\ou_{\be})$ is the closed ball of center
$\ou_{\be}$ and radius $\varepsilon>0$ in $L^{p_0}(\Omega)$.

\begin{Theorem}\label{ThmSolSbL2}
Assume that {\rm\textbf{(A1)}-\textbf{(A3)}} hold and let
$\ou_{\be}\in\mU_{ad}(\be)$ be a bang-bang solution of
problem~\eqref{BBConProb} with respect to $\be\in E$ such that
$\ou_{\be}$ is strict in some neighborhood
$\oB^{p_0}_{\varepsilon}(\ou_{\be})$ with $\varepsilon>0$. For every
$e\in E$ near $\be$ enough, let $\ou_e$ be a solution of the
following control problem
\begin{equation}\label{PrPrWtCdGl}
    {\rm Minimize}\quad\mJ(u,e)\quad
    {\rm subject~to}\quad u\in\mU_{ad}(e)\cap\oB^{p_0}_{\varepsilon}(\ou_{\be}),
\end{equation}
where $\mJ(\cdot,\cdot)$ is the cost functional of
problem~\eqref{BBConProb}. Then, we have $\ou_e\to\ou_{\be}$ in
$L^{p_0}(\Omega)$ when $e\to\be$ in $E$.
\end{Theorem}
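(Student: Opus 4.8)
The plan is to prove the statement by a subsequence--contradiction device: it suffices to show that for an arbitrary sequence $e_n\to\be$ in $E$, every corresponding family of solutions $\ou_{e_n}$ of \eqref{PrPrWtCdGl} admits a subsequence converging to $\ou_{\be}$ in $L^{p_0}(\Omega)$. First I would extract a weak limit. Since $\ou_{e_n}\in\mU_{ad}(e_n)$ and the perturbed bounds $\alpha+(e_\alpha)_n$, $\beta+(e_\beta)_n$ converge to $\alpha+\be_\alpha$, $\beta+\be_\beta$ in $L^\infty(\Omega)$ (here $p_3=p_4=\infty$), the family $\{\ou_{e_n}\}$ is bounded in $L^\infty(\Omega)$. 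Passing to a subsequence I may assume $\ou_{e_n}\stackrel{w^*}\rightharpoonup\wu$ in $L^\infty(\Omega)$, whence $\ou_{e_n}\rightharpoonup\wu$ weakly in every $L^p(\Omega)$ with $1\le p<\infty$. Testing the pointwise bounds against nonnegative functions and passing to the limit gives $\wu\in\mU_{ad}(\be)$, while weak lower semicontinuity of the $L^{p_0}$-norm yields $\wu\in\oB^{p_0}_\varepsilon(\ou_{\be})$; thus $\wu$ is feasible for the limit problem.

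Next I would identify $\wu$ as $\ou_{\be}$ through optimality. Because $\ou_{e_n}+(e_y)_n\rightharpoonup\wu+\be_y$ weakly in $L^2(\Omega)$ and the control-to-state map is completely continuous (weak-to-strong, by compactness of $A^{-1}$ together with the monotonicity of $f$ in \textbf{(A1)}, cf.~\cite{Trolt10B}), the states $y_{\ou_{e_n}+(e_y)_n}$ converge strongly in $Y\hookrightarrow C(\bar\Omega)\cap L^2(\Omega)$ to $y_{\wu+\be_y}$. Consequently $\mJ(\ou_{e_n},e_n)\to\mJ(\wu,\be)$, using \textbf{(A2)} for the term $\int_\Omega L(x,\cdot)\,dx$ and the strong $L^2$-convergence of $(e_J)_n$ for the inner-product term. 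To compare with the reference value I would use the recovery sequence $u_n:=\max\{\alpha+(e_\alpha)_n,\min\{\beta+(e_\beta)_n,\ou_{\be}\}\}$, which lies in $\mU_{ad}(e_n)$, converges to $\ou_{\be}$ in $L^\infty(\Omega)$, and hence is feasible for \eqref{PrPrWtCdGl} for $n$ large, with $\mJ(u_n,e_n)\to\mJ(\ou_{\be},\be)$ by the same continuity. Optimality of $\ou_{e_n}$ gives $\mJ(\ou_{e_n},e_n)\le\mJ(u_n,e_n)$, and letting $n\to\infty$ yields $\mJ(\wu,\be)\le\mJ(\ou_{\be},\be)$. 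Since $\wu$ is feasible and $\ou_{\be}$ is a strict minimizer on $\mU_{ad}(\be)\cap\oB^{p_0}_\varepsilon(\ou_{\be})$, this forces $\wu=\ou_{\be}$.

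Finally I would upgrade weak convergence to strong. At this stage $\ou_{e_n}\rightharpoonup\ou_{\be}$ in $L^1(\Omega)$ with $\ou_{e_n}\in\mU_{ad}(e_n)$ and $\ou_{\be}$ bang-bang, so Lemma~\ref{LemBBPrp} (the Visintin-type result built on Theorem~\ref{ThmVisinL1}) applies and delivers $\ou_{e_n}\to\ou_{\be}$ strongly in $L^1(\Omega)$. Interpolating against the uniform $L^\infty$-bound through $\|\ou_{e_n}-\ou_{\be}\|_{L^{p_0}(\Omega)}^{p_0}\le\|\ou_{e_n}-\ou_{\be}\|_{L^\infty(\Omega)}^{p_0-1}\,\|\ou_{e_n}-\ou_{\be}\|_{L^1(\Omega)}$ then gives $\ou_{e_n}\to\ou_{\be}$ in $L^{p_0}(\Omega)$, and the subsequence principle closes the argument.

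I expect the weak-to-strong upgrade to be the main obstacle: the weak limit is pinned down to $\ou_{\be}$ by a routine optimality/strictness argument, but genuine norm convergence in $L^{p_0}(\Omega)$ cannot hold without additional structure, and it is precisely here that the bang-bang property enters, via Lemma~\ref{LemBBPrp}. A secondary technical point is the complete continuity of the control-to-state operator, which is what legitimizes passing to the limit in the cost functional under only weak convergence of the controls.
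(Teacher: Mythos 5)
Your proposal is correct and follows essentially the same route as the paper: weak compactness of $\{\ou_{e_n}\}$, identification of the weak limit as $\ou_{\be}$ via a recovery sequence in $\mU_{ad}(e_n)$ that converges to $\ou_{\be}$ in $L^\infty(\Omega)$ together with the strictness of the local minimum, the bang-bang/Visintin Lemma~\ref{LemBBPrp} to upgrade to strong $L^1(\Omega)$ convergence, and interpolation against the uniform $L^\infty$ bound to reach $L^{p_0}(\Omega)$. The only cosmetic difference is your recovery sequence (the pointwise projection of $\ou_{\be}$ onto the perturbed bounds) versus the paper's convex-combination construction $u_{e_{n_k}}=\lambda(\alpha+(e_{n_k})_\alpha)+(1-\lambda)(\beta+(e_{n_k})_\beta)$; both yield the same $L^\infty$ estimate $\|u_n-\ou_{\be}\|_{L^\infty(\Omega)}\le\|e_n-\be\|_E$ and play the identical role.
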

\begin{proof}
Let $\{e_n\}_{n\in\N}$ be such that $e_n\to\be$ in $E$ and let
$\ou_{e_n}\in\mU_{ad}(e_n)\cap\oB^{p_0}_{\varepsilon}(\ou_{\be})$ be
a global solution of problem~\eqref{PrPrWtCdGl} with respect to
$e_n$. Since the sequence $\{\ou_{e_n}\}$ is bounded in
$L^{p_0}(\Omega)$, it has a subsequence $\{\ou_{e_{n_k}}\}$ with
$\ou_{e_{n_k}}\rightharpoonup\hu$ in $L^{p_0}(\Omega)$ for some
$\hu\in\mU_{ad}(\be)\cap\oB^{p_0}_{\varepsilon}(\ou_{\be})$. Because
$\ou_{\be}\in\mU_{ad}(\be)$, we have
$$\ou_{\be}=\lambda(\alpha+\be_{\alpha})+(1-\lambda)(\beta+\be_{\beta})$$
for some $\lambda(x)\in[0,1]$ for almost all $x\in\Omega$. Defining
$u_{e_{n_k}}\in\mU_{ad}(e_{n_k})\cap\oB^{p_0}_{\varepsilon}(\ou_{\be})$
by
$$u_{e_{n_k}}:=\lambda\big(\alpha+(e_{n_k})_{\alpha}\big)+(1-\lambda)\big(\beta+(e_{n_k})_{\beta}\big)$$
for almost all $x\in\Omega$, we have
$$\|u_{e_{n_k}}-\ou_{\be}\|_{L^\infty(\Omega)}
  \leq\|(e_{n_k})_{\alpha}-\be_{\alpha}\|_{L^\infty(\Omega)}+\|(e_{n_k})_{\beta}-\be_{\beta}\|_{L^\infty(\Omega)}
  \leq\|e_{n_k}-\be\|_E.$$
It follows that when $k\to\infty$, we have
$$\|u_{e_{n_k}}-\ou_{\be}\|_{L^{p_0}(\Omega)}\leq|\Omega|^{1/p_0}\|e_{n_k}-\be\|_E\to0.$$
Letting $k\to\infty$, we get
$\mJ(\ou_{e_{n_k}},e_{n_k})\to\mJ(\hu,\be)$ and
$\mJ(u_{e_{n_k}},e_{n_k})\to\mJ(\ou_{\be},\be)$ with
$$\mJ(\ou_{e_{n_k}},e_{n_k})\leq\mJ(u_{e_{n_k}},e_{n_k}),~\forall k\in\N.$$
This yields $\mJ(\hu,\be)\leq\mJ(\ou_{\be},\be)$. Therefore, we
obtain $\hu=\ou_{\be}$ since $\ou_{\be}$ is a strict local solution
of problem~\eqref{BBConProb} with respect to $\be$. We have shown
that $\ou_{e_{n_k}}\rightharpoonup\ou_{\be}$ in $L^{p_0}(\Omega)$.
Consequently, $\ou_{e_{n_k}}\rightharpoonup\ou_{\be}$ in
$L^1(\Omega)$. Since $\ou_{\be}$ is bang-bang, we deduce
$\ou_{e_{n_k}}\to\ou_{\be}$ in $L^1(\Omega)$ by
Lemma~\ref{LemBBPrp}. Note that $\ou_{e_{n_k}}\in\mU_{ad}(e_{n_k})$
and the set $\bigcup_{k=1}^\infty\mU_{ad}(e_{n_k})$ is bounded in
$L^\infty(\Omega)$. Hence, we can find a constant $M>0$ such that
$\|\ou_{e_{n_k}}-\ou_{\be}\|_{L^\infty(\Omega)}\leq M$ for every
$k\in\N$. Applying H\"older's inequality we get
$$\|\ou_{e_{n_k}}-\ou_{\be}\|_{L^{p_0}(\Omega)}
  \leq\|\ou_{e_{n_k}}-\ou_{\be}\|^{1/p_0}_{L^1(\Omega)}\|\ou_{e_{n_k}}-\ou_{\be}\|^{(p_0-1)/p_0}_{L^\infty(\Omega)}
  \leq M\|\ou_{e_{n_k}}-\ou_{\be}\|^{1/p_0}_{L^1(\Omega)}\to0,$$
which verifies that $\ou_e\to\ou_{\be}$ in $L^{p_0}(\Omega)$ when
$e\to\be$ in $E$. $\hfill\Box$
\end{proof}

\begin{Corollary}\label{CorSolSbL2}
Assume that {\rm\textbf{(A1)}-\textbf{(A3)}} hold and let
$\ou_{\be}$ be a unique bang-bang solution of
problem~\eqref{BBConProb} with respect to $\be\in E$. For every
$e\in E$, let $\ou_e$ be a solution of problem~\eqref{BBConProb}.
Then, we have $\ou_e\to\ou_{\be}$ in $L^{p_0}(\Omega)$ when
$e\to\be$ in $E$.
\end{Corollary}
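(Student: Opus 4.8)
The plan is to follow the template of the proof of Theorem~\ref{ThmSolSbL2}, exploiting the fact that a unique global solution is automatically a strict local solution on every ball $\oB^{p_0}_\varepsilon(\ou_{\be})$, so that the relevant machinery is available; the only structural difference is that now no ball constraint confines the feasible set, which in fact simplifies the construction of the recovery sequence. It suffices to argue along sequences: I would fix an arbitrary sequence $e_n\to\be$ in $E$ with global solutions $\ou_{e_n}\in\mU_{ad}(e_n)$ of \eqref{BBConProb}, and show that each subsequence admits a further subsequence converging to $\ou_{\be}$ in $L^{p_0}(\Omega)$; a standard subsequence-of-every-subsequence argument then yields $\ou_e\to\ou_{\be}$ as $e\to\be$.

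First I would establish weak subsequential compactness and identify the limit. Since $e_n\to\be$ in $E$, the perturbed bounds $\alpha+(e_n)_\alpha$ and $\beta+(e_n)_\beta$ converge in $L^\infty(\Omega)$, so $\bigcup_n\mU_{ad}(e_n)$ is bounded in $L^\infty(\Omega)$ and hence $\{\ou_{e_n}\}$ is bounded in $L^{p_0}(\Omega)$. I would extract $\ou_{e_{n_k}}\rightharpoonup\hu$ in $L^{p_0}(\Omega)$ with $\hu\in\mU_{ad}(\be)$, the admissibility of the weak limit following from the $L^\infty$-convergence of the bounds. Writing $\ou_{\be}=\lambda(\alpha+\be_\alpha)+(1-\lambda)(\beta+\be_\beta)$ for some measurable $\lambda(x)\in[0,1]$, I would build the recovery sequence $u_{e_{n_k}}:=\lambda(\alpha+(e_{n_k})_\alpha)+(1-\lambda)(\beta+(e_{n_k})_\beta)\in\mU_{ad}(e_{n_k})$, which converges to $\ou_{\be}$ in $L^\infty(\Omega)$ and therefore in $L^{p_0}(\Omega)$. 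Passing to the limit in the optimality inequality $\mJ(\ou_{e_{n_k}},e_{n_k})\le\mJ(u_{e_{n_k}},e_{n_k})$, and using that the reduced functional $\mJ(\cdot,\cdot)$ is continuous for the weak topology on the controls---this rests on the complete continuity of the control-to-state map $G$, so that $y_{\ou_{e_{n_k}}+(e_{n_k})_y}\to y_{\hu+\be_y}$ strongly---I would obtain $\mJ(\hu,\be)\le\mJ(\ou_{\be},\be)$. Uniqueness of the global solution then forces $\hu=\ou_{\be}$, i.e., $\ou_{e_{n_k}}\rightharpoonup\ou_{\be}$ in $L^{p_0}(\Omega)$.

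It remains to upgrade weak convergence to strong convergence, exactly as in Theorem~\ref{ThmSolSbL2}. From $\ou_{e_{n_k}}\rightharpoonup\ou_{\be}$ in $L^{p_0}(\Omega)$ I get weak convergence in $L^1(\Omega)$, and since $\ou_{\be}$ is bang-bang, Lemma~\ref{LemBBPrp} yields $\ou_{e_{n_k}}\to\ou_{\be}$ in $L^1(\Omega)$. Using the uniform bound $\|\ou_{e_{n_k}}-\ou_{\be}\|_{L^\infty(\Omega)}\le M$ together with the interpolation inequality $\|v\|_{L^{p_0}(\Omega)}\le\|v\|_{L^1(\Omega)}^{1/p_0}\|v\|_{L^\infty(\Omega)}^{(p_0-1)/p_0}$ then gives $\ou_{e_{n_k}}\to\ou_{\be}$ in $L^{p_0}(\Omega)$, closing the subsequence argument. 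The one point requiring genuine care---and the main obstacle---is the limit identification without a confining ball: whereas Theorem~\ref{ThmSolSbL2} uses strict \emph{local} optimality together with the restriction to $\oB^{p_0}_\varepsilon(\ou_{\be})$ to pin down $\hu$, here I must instead invoke \emph{global} uniqueness of $\ou_{\be}$ and the weak continuity of $\mJ$. This is precisely the place where the bang-bang structure (through Lemma~\ref{LemBBPrp}) and the complete continuity of $G$ cannot be dispensed with.
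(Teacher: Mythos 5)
Your proposal is correct and follows essentially the same route as the paper: the paper's own proof is precisely the observation that the argument of Theorem~\ref{ThmSolSbL2} carries over verbatim once the ball $\oB^{p_0}_{\varepsilon}(\ou_{\be})$ is replaced by $L^{p_0}(\Omega)$, with global uniqueness standing in for strict local optimality to identify the weak limit, and you have simply written out that adaptation in full (recovery sequence, limit identification, Lemma~\ref{LemBBPrp}, interpolation).
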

\begin{proof}
The proof is similar to the proof of Theorem~\ref{ThmSolSbL2}, where
the neighborhood $\oB^{p_0}_{\varepsilon}(\ou_{\be})$ is replaced by
$L^{p_0}(\Omega)$. $\hfill\Box$
\end{proof}

\medskip
We need the following lemmas that will be used in the proofs of
H\"olderian stability for solutions to problem~\eqref{BBConProb} in
the parameter $e\in E$ as well as existence of H\"olderian
selections of the solution map $S(\cdot)$.

\begin{Lemma}\label{Lem25CaEx}
Given $\we\in E$, let any $\wu\in\mU_{ad}(\we)$ be given. Then,
there exists $C_1=C_1(\wu,\we)>0$ such that
\begin{equation}\label{Cas25Ext}
    \|y_u-y_{\wu}\|_Y+\|\varphi_u-\varphi_{\wu}\|_Y\leq C_1\|u-\wu\|_{L^{p_0}(\Omega)},\
    \forall u\in L^{p_0}(\Omega),
\end{equation}
where $y_u$ and $\varphi_u$ are respectively the weak solutions of
\eqref{StateEq} and \eqref{AdjStaEq}.
\end{Lemma}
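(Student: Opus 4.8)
The plan is to estimate the two summands on the left-hand side separately, in each case forming the difference of the governing equations and invoking a single elliptic estimate for linear Dirichlet problems whose zeroth-order coefficient is nonnegative. For the state term, I would subtract \eqref{StateEq} written for $u$ and for $\wu$ and set $d=y_u-y_{\wu}$. The mean value theorem gives $f(x,y_u)-f(x,y_{\wu})=c(x)\,d$ with $c(x)=\frac{\partial f}{\partial y}\big(x,\xi(x)\big)\ge0$ by \textbf{(A1)}, where $\xi(x)$ lies between $y_u(x)$ and $y_{\wu}(x)$, so that $Ad+c\,d=u-\wu$ with $c\ge0$. The analytic input I would use (from the linear elliptic theory of \cite[Chapter~4]{Trolt10B}) is that for $Ad+c\,d=\nobreak g$ with $c\ge0$ and $g\in L^{p_0}(\Omega)$, $p_0>N/2$, one has $\|d\|_Y\le C_A\|g\|_{L^{p_0}(\Omega)}$ with $C_A$ depending only on $\lambda_A$, $\Omega$ and $p_0$ and \emph{not} on $c$: the $H^1_0(\Omega)$-bound follows by testing with $d$ and discarding the nonnegative term $\int_\Omega c\,d^2$, while the $C(\bar\Omega)$-bound follows from Stampacchia truncation, where the zeroth-order term again carries a favourable sign. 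This yields the state estimate $\|y_u-y_{\wu}\|_Y\le C_A\|u-\wu\|_{L^{p_0}(\Omega)}$.

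For the adjoint term, I would subtract the two instances of \eqref{AdjStaEq} attached to $y_u$ and $y_{\wu}$, put $\psi=\varphi_u-\varphi_{\wu}$, and add and subtract $\frac{\partial f}{\partial y}(x,y_u)\varphi_{\wu}$ to reorganize the zeroth-order contributions. This shows that $\psi$ solves $A^*\psi+\frac{\partial f}{\partial y}(x,y_u)\,\psi=g$ with
\[
   g=\Big[\frac{\partial L}{\partial y}(x,y_u)-\frac{\partial L}{\partial y}(x,y_{\wu})\Big]
     -\Big[\frac{\partial f}{\partial y}(x,y_u)-\frac{\partial f}{\partial y}(x,y_{\wu})\Big]\varphi_{\wu}.
\]
Since $\frac{\partial f}{\partial y}(x,y_u)\ge0$ by \textbf{(A1)}, the same uniform elliptic estimate applies and gives $\|\psi\|_Y\le C_A\|g\|_{L^{p_0}(\Omega)}$. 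It then remains to bound $\|g\|_{L^{p_0}(\Omega)}$ linearly in $\|u-\wu\|_{L^{p_0}(\Omega)}$. Applying the mean value theorem to each bracket produces the factor $y_u-y_{\wu}$ multiplied by $\frac{\partial^2 L}{\partial y^2}$, respectively $\frac{\partial^2 f}{\partial y^2}$, evaluated at an intermediate state; on the region $|y|\le M$ containing the involved states these are bounded by $C_{L,M}$ and $C_{f,M}$ through \textbf{(A1)}--\textbf{(A2)}, and $\varphi_{\wu}\in Y$ is bounded in $L^\infty(\Omega)$. Hence pointwise $|g(x)|\le\big(C_{L,M}+C_{f,M}\|\varphi_{\wu}\|_{L^\infty(\Omega)}\big)\,|y_u(x)-y_{\wu}(x)|$, so that $\|g\|_{L^{p_0}(\Omega)}\le C\,|\Omega|^{1/p_0}\|y_u-y_{\wu}\|_{L^\infty(\Omega)}$, and the state estimate closes the loop. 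Adding the two estimates gives the claim with $C_1$ absorbing $C_A$, $C_{L,M}$, $C_{f,M}$ and $\|\varphi_{\wu}\|_{L^\infty(\Omega)}$.

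The main obstacle is the $C(\bar\Omega)$-part of the estimate together with the uniformity of the constant, and two points deserve care. First, the favourable-sign arguments must be carried out so that $C_A$ is genuinely independent of the zeroth-order coefficient; this is exactly where the monotonicity $\frac{\partial f}{\partial y}\ge0$ from \textbf{(A1)} enters, and it is what lets \emph{both} difference estimates be carried out with one and the same elliptic constant. Second, the coefficients $C_{L,M}$ and $C_{f,M}$ controlling the differences of the nonlinear data depend on a bound $M$ for the sup-norms of the involved states; since $y_{\wu}$ is fixed and, by the state estimate, $y_u$ remains in a bounded set whenever $u$ does, $M$ and hence $C_1$ can be fixed in terms of $\wu$ and $\we$, which suffices for the bounded families of admissible controls used in the sequel. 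I would keep the bookkeeping of these constants explicit in order to confirm that $C_1$ indeed depends only on $\wu$ and $\we$.
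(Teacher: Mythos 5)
Your argument is, in substance, the one the paper relies on: the paper's own proof consists solely of the remark that it is similar to the proof of \cite[Lemma~4.1]{QuiWch17}, and that argument proceeds exactly as you do --- subtract the two state equations, use the mean value theorem and the monotonicity in \textbf{(A1)} to obtain a nonnegative zeroth-order coefficient, apply an elliptic estimate in $Y$ whose constant is independent of that coefficient, and then treat the difference of the adjoint equations the same way with the right-hand side controlled through $C_{L,M}$, $C_{f,M}$ and $\|\varphi_{\wu}\|_{L^\infty(\Omega)}$. The only point worth recording is the one you already flag yourself: $C_{L,M}$ and $C_{f,M}$ require an a priori $L^\infty(\Omega)$-bound $M$ on $y_u$, which the state estimate supplies uniformly only when $u$ ranges over a bounded subset of $L^{p_0}(\Omega)$, so what the argument actually delivers is a constant $C_1$ depending additionally on such a bound rather than the literal ``for all $u\in L^{p_0}(\Omega)$'' of the statement (indeed, without growth conditions on $\partial^2L/\partial y^2$ beyond \textbf{(A2)} no uniform constant can exist). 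Since the lemma is only ever invoked for controls in $\mU_{ad}(e)$ with $e$ near a fixed parameter, which are uniformly bounded in $L^\infty(\Omega)$, this discrepancy is harmless and lies in the statement rather than in your proof.
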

\begin{proof}
The proof is similar to the proof of \cite[Lemma~4.1]{QuiWch17}.
$\hfill\Box$
\end{proof}

\begin{Lemma}\label{LemCas27}
Given $\we\in E$, let any $\wu\in\mU_{ad}(\we)$ be given. Then, for
every $\varepsilon>0$, there exists $\rho>0$ such that for
$u\in\mU_{ad}(\we)$ with $\|u-\wu\|_{L^{p_0}(\Omega)}\leq\rho$ the
following holds
\[
\big|\mJ''_u(u,\we)v^2-\mJ''_u(\wu,\we)v^2\big|\leq\varepsilon\|z_v\|^2_{L^2(\Omega)}
\quad \forall v\in L^{p_0}(\Omega),
\]
where $z_v$ solves the linearized equation
\[
\begin{cases}
\begin{aligned}
    Az_{u,v} + \frac{\partial f}{\partial y}(x,y_{\wu+\we_y})z_{u,v} &=v\ &&\mbox{in}\ \Omega\\
                                                              z_{u,v}&=0  &&\mbox{on}\ \Gamma.
\end{aligned}
\end{cases}
\]
\end{Lemma}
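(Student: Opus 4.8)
The plan is to reduce the inequality to a single continuity estimate for the integrand of the reduced Hessian. First I would record the explicit form of $\mJ''_u(\cdot,\we)$. Since $\zeta=0$ and $\mJ(u,\we)=J(u+\we_y)+(\we_J,y_{u+\we_y})_{L^2(\Omega)}$, the chain rule together with \eqref{SDeriCost} gives, for $z'_v:=G'(u+\we_y)v$,
\[
\mJ''_u(u,\we)v^2=\int_\Omega\Big(\tfrac{\partial^2L}{\partial y^2}(x,y_{u+\we_y})-\varphi_{u,\we}\tfrac{\partial^2f}{\partial y^2}(x,y_{u+\we_y})\Big)(z'_v)^2\,dx,
\]
where $\varphi_{u,\we}$ is the perturbed adjoint state of \eqref{PertbAdjEq}: the extra term $(\we_J,G''(u+\we_y)v^2)_{L^2(\Omega)}$ equals $-\int_\Omega p\,\tfrac{\partial^2f}{\partial y^2}(x,y_{u+\we_y})(z'_v)^2\,dx$ with $p$ solving the adjoint equation with right-hand side $\we_J$, and $p$ added to the $J$-adjoint yields $\varphi_{u,\we}$, exactly as the first-order term is absorbed in \eqref{PerbDermJ}. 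Writing $y:=y_{\wu+\we_y}$, $\varphi:=\varphi_{\wu,\we}$, $z:=z_v=G'(\wu+\we_y)v$ for the reference objects and $y',\varphi',z'=z'_v$ for those at $u$, and setting $h_u:=\tfrac{\partial^2L}{\partial y^2}(\cdot,y')-\varphi'\tfrac{\partial^2f}{\partial y^2}(\cdot,y')$, $h_{\wu}:=\tfrac{\partial^2L}{\partial y^2}(\cdot,y)-\varphi\tfrac{\partial^2f}{\partial y^2}(\cdot,y)$, the quantity to bound is $\big|\int_\Omega(h_u(z')^2-h_{\wu}z^2)\,dx\big|$, which I would split as
\[
\int_\Omega(h_u-h_{\wu})(z')^2\,dx+\int_\Omega h_{\wu}\big((z')^2-z^2\big)\,dx.
\]

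The crux is a relative estimate comparing $z'$ with the reference linearization $z$. Since $\wu\in\mU_{ad}(\we)$ and $\|u-\wu\|_{L^{p_0}(\Omega)}\le\rho$, the controls $u+\we_y$ stay in a bounded subset of $L^{p_0}(\Omega)$ and the states in a bounded subset of $C(\bar\Omega)$, which fixes a common $M$ and the constants $C_{f,M},C_{L,M}$ from {\rm\textbf{(A1)}-\textbf{(A2)}}. Arguing as in Lemma~\ref{Lem25CaEx} at the shifted base point $\wu+\we_y$ gives $\|y'-y\|_Y+\|\varphi'-\varphi\|_Y\le C_1\|u-\wu\|_{L^{p_0}(\Omega)}$. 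The difference $z'-z$ solves
\[
A(z'-z)+\tfrac{\partial f}{\partial y}(x,y')(z'-z)=-\Big(\tfrac{\partial f}{\partial y}(x,y')-\tfrac{\partial f}{\partial y}(x,y)\Big)z,
\]
and by the mean value theorem and the bound on $\partial^2f/\partial y^2$ in {\rm\textbf{(A1)}} the right-hand side is bounded in $L^2(\Omega)$ by $C_{f,M}\|y'-y\|_{C(\bar\Omega)}\|z\|_{L^2(\Omega)}$. Because $\partial f/\partial y\ge0$ the operator is coercive, so the standard elliptic estimate yields
\[
\|z'-z\|_{L^2(\Omega)}\le C\,\|y'-y\|_{C(\bar\Omega)}\|z\|_{L^2(\Omega)}\le C'\rho\,\|z\|_{L^2(\Omega)},
\]
and in particular $\|z'\|_{L^2(\Omega)}\le 2\|z\|_{L^2(\Omega)}$ once $\rho$ is small.

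With this in hand the two pieces are routine. For the first piece I would invoke the uniform continuity clauses of {\rm\textbf{(A1)}-\textbf{(A2)}}: once $\|y'-y\|_{C(\bar\Omega)}$ is small, the differences $\big|\tfrac{\partial^2L}{\partial y^2}(x,y')-\tfrac{\partial^2L}{\partial y^2}(x,y)\big|$ and $\big|\tfrac{\partial^2f}{\partial y^2}(x,y')-\tfrac{\partial^2f}{\partial y^2}(x,y)\big|$ are uniformly small a.e., while $\|\varphi'-\varphi\|_{C(\bar\Omega)}$ is small and $\varphi$ is bounded; hence $\|h_u-h_{\wu}\|_{L^\infty(\Omega)}$ can be made arbitrarily small, and $\int_\Omega(h_u-h_{\wu})(z')^2\,dx\le 4\|h_u-h_{\wu}\|_{L^\infty(\Omega)}\|z\|_{L^2(\Omega)}^2$. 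For the second piece, $\|h_{\wu}\|_{L^\infty(\Omega)}\le C_{L,M}+\|\varphi\|_{C(\bar\Omega)}C_{f,M}$ and $(z')^2-z^2=(z'-z)(z'+z)$ give $\int_\Omega h_{\wu}\big((z')^2-z^2\big)\,dx\le C\|z'-z\|_{L^2(\Omega)}\|z'+z\|_{L^2(\Omega)}\le C''\rho\,\|z\|_{L^2(\Omega)}^2$. Given $\varepsilon>0$ I would first choose the smallness threshold for $\|y'-y\|_{C(\bar\Omega)}$ that makes the coefficient differences small enough, then shrink $\rho$ so that the resulting bound and the $C''\rho$ term are each below $\varepsilon/2$; by the state estimate this corresponds to $\|u-\wu\|_{L^{p_0}(\Omega)}\le\rho$, which proves the claim since $z=z_v$.

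I expect the main obstacle to be the relative estimate $\|z'-z\|_{L^2(\Omega)}\le C'\rho\,\|z\|_{L^2(\Omega)}$: the bound must be proportional to $\|z\|_{L^2(\Omega)}=\|z_v\|_{L^2(\Omega)}$ rather than to $\|v\|$, because the right-hand side of the target inequality is measured in $\|z_v\|_{L^2(\Omega)}^2$; this proportionality is precisely what lets both pieces be absorbed, and it is where the coercivity coming from $\partial f/\partial y\ge0$ together with the mean-value bound via $\partial^2f/\partial y^2$ enters. A secondary point of care is that the uniform modulus-of-continuity hypotheses in {\rm\textbf{(A1)}-\textbf{(A2)}}, and not merely the boundedness of the second derivatives, are exactly what control $\|h_u-h_{\wu}\|_{L^\infty(\Omega)}$, and that the $L^\infty$-boundedness of $\mU_{ad}(\we)$ is needed to keep $M$ and all constants uniform over the $\rho$-ball.
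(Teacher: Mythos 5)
Your argument is correct and is essentially the argument the paper relies on: the paper's own ``proof'' of this lemma is only the one-line citation to \cite[Lemma~2.7]{Cas12SICON}, and your route --- the integral representation of $\mJ''_u(\cdot,\we)$ with the perturbed adjoint $\varphi_{u,\we}$ absorbing the $\we_J$-term, Lipschitz dependence of $(y,\varphi)$ on $u$ in $C(\bar\Omega)\times C(\bar\Omega)$ as in Lemma~\ref{Lem25CaEx}, the uniform-continuity clauses of \textbf{(A1)}--\textbf{(A2)} to control the coefficient difference in $L^\infty(\Omega)$, and the relative estimate $\|z'-z\|_{L^2(\Omega)}\leq C\rho\,\|z\|_{L^2(\Omega)}$ (which is exactly the second inequality of Lemma~\ref{lem48}, i.e.\ \cite[Lemma~4.2]{QuiWch17}) --- is precisely how that cited proof runs. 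You also correctly isolate the one non-routine ingredient, namely that the perturbation of the linearized state must be measured against $\|z_v\|_{L^2(\Omega)}$ rather than against $\|v\|$, which is what allows both terms of the splitting to be absorbed into $\varepsilon\|z_v\|^2_{L^2(\Omega)}$.
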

\begin{proof}
The proof is similar to the proof of \cite[Lemma~2.7]{Cas12SICON}.
$\hfill\Box$
\end{proof}

\begin{Lemma}\label{lemma44}
Let $\be\in E$ and $\eta>0$ be given. Then there is a constant $K_M>0$ such that
\begin{equation}\label{EsScmJuvw}
    |\mJ''_u(u,e)(v,w)|
    \leq K_M\|z^e_{u,v}\|_{L^2(\Omega)}\|z^e_{u,w}\|_{L^2(\Omega)}
\end{equation}
holds for all $e\in B_\eta(\be)$, $u\in \mU_{ad}(e)$, $v,w\in
L^2(\Omega)$, where $z^e_{u,v}=G'(u+e_y)v$, $z^e_{u,w}=G'(u+e_y)w$
solve the linearized equation
\[
\begin{cases}
\begin{aligned}
    Az + \frac{\partial f}{\partial y}(x,y_{u+e_y})z &=v\ &&\mbox{in}\ \Omega\\
                                                   z &=0  &&\mbox{on}\ \Gamma.
\end{aligned}
\end{cases}
\]
\end{Lemma}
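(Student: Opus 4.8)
The plan is to rewrite $\mJ''_u(u,e)(v,w)$ in its compact adjoint form, bound the resulting integrand coefficient by a single constant coming from the structural hypotheses \textbf{(A1)}--\textbf{(A2)}, and then finish with the Cauchy--Schwarz inequality. Since $\zeta=0$ in this section, we have $\mJ(u,e)=\int_\Omega[L(x,y_{u+e_y})+e_J\,y_{u+e_y}]\,dx$, so differentiating twice in $u$ (equivalently, applying formula \eqref{SDeriCost} with $\zeta=0$ and with $L(x,y)$ replaced by the integrand $L(x,y)+e_J(x)y$, whose second $y$-derivative is again $\frac{\partial^2L}{\partial y^2}$) yields
\[
    \mJ''_u(u,e)(v,w)
    =\int_\Omega\left(\frac{\partial^2L}{\partial y^2}(x,y_{u+e_y})
      -\varphi_{u,e}\frac{\partial^2f}{\partial y^2}(x,y_{u+e_y})\right)z^e_{u,v}\,z^e_{u,w}\,dx,
\]
where $\varphi_{u,e}$ is the adjoint state solving the analogue of \eqref{PertbAdjEq}, namely $A^*\varphi_{u,e}+\frac{\partial f}{\partial y}(x,y_{u+e_y})\varphi_{u,e}=\frac{\partial L}{\partial y}(x,y_{u+e_y})+e_J$ with homogeneous boundary data.

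First I would fix a uniform $M>0$. The family $\{u+e_y\st e\in B_\eta(\be),\,u\in\mU_{ad}(e)\}$ is bounded in $L^2(\Omega)$: because $\alpha,\beta\in L^\infty(\Omega)$ and $p_3=p_4=\infty$, the controls $u$ lie in a fixed bounded subset of $L^\infty(\Omega)$ for $e$ near $\be$, while $\|e_y\|_{L^2(\Omega)}\leq\|\be_y\|_{L^2(\Omega)}+\eta$. Since $2>N/2$, the state estimate of \cite[Chapter~4]{Trolt10B} (the perturbed counterpart of \eqref{EstSolEqSt}) gives $\|y_{u+e_y}\|_{C(\bar\Omega)}\leq M$ uniformly over all admissible pairs $(u,e)$. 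By \textbf{(A1)} and \textbf{(A2)} this bounds the coefficients pointwise, $|\frac{\partial^2L}{\partial y^2}(x,y_{u+e_y})|\leq C_{L,M}$, $|\frac{\partial^2f}{\partial y^2}(x,y_{u+e_y})|\leq C_{f,M}$, and $|\frac{\partial L}{\partial y}(x,y_{u+e_y})|\leq\psi_M$ with $\psi_M\in L^{\op}(\Omega)$, $\op>N/2$. Next I would bound the adjoint state uniformly in $C(\bar\Omega)$: the right-hand side $\frac{\partial L}{\partial y}(x,y_{u+e_y})+e_J$ is bounded in $L^q(\Omega)$ with $q=\min\{\op,2\}>N/2$, and since $0\leq\frac{\partial f}{\partial y}(x,y_{u+e_y})\leq C_{f,M}$ the adjoint operator is uniformly coercive in $H^1_0(\Omega)$ (as $A^*$ is uniformly elliptic by \textbf{(A3)}); the $L^q$-to-$C(\bar\Omega)$ regularity estimate then furnishes a constant $C>0$, independent of $(u,e)$, with $\|\varphi_{u,e}\|_{C(\bar\Omega)}\leq C$.

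Setting $K_M:=C_{L,M}+C\,C_{f,M}$, the coefficient in the integrand above is bounded in modulus by $K_M$ uniformly in $(u,e)$, so the Cauchy--Schwarz inequality gives
\[
    |\mJ''_u(u,e)(v,w)|
    \leq K_M\int_\Omega|z^e_{u,v}|\,|z^e_{u,w}|\,dx
    \leq K_M\|z^e_{u,v}\|_{L^2(\Omega)}\|z^e_{u,w}\|_{L^2(\Omega)},
\]
which is \eqref{EsScmJuvw}. The main obstacle is the uniform $C(\bar\Omega)$-bound on the family $\{\varphi_{u,e}\}$: it hinges on the uniform state bound $M$ (which keeps the zeroth-order coefficient in a fixed nonnegative range, so the regularity constant can be chosen independently of $(u,e)$) together with a maximum-principle/regularity estimate for \eqref{PertbAdjEq} that depends only on the $L^q$-norm of the data for some $q>N/2$. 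Everything else is a direct combination of the pointwise coefficient bounds from \textbf{(A1)}--\textbf{(A2)} with Cauchy--Schwarz.
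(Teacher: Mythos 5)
Your proof is correct, and it rests on the same two pillars as the paper's argument: uniform $L^\infty(\Omega)$ bounds on the states and adjoint states over the whole admissible family $\{(u,e)\,|\,e\in B_\eta(\be),\ u\in\mU_{ad}(e)\}$, followed by Cauchy--Schwarz. Where you differ is in the decomposition of $\mJ''_u$. The paper keeps the two pieces of the cost separate: it writes $\mJ''_u(u,e)(v,w)=J''(u+e_y)(v,w)+\big(e_J,G''(u+e_y)(v,w)\big)_{L^2(\Omega)}$, bounds the first term via an $L^\infty$ bound $\ell_M$ on the coefficient $F(u,e)=\frac{\partial^2L}{\partial y^2}-\varphi\,\frac{\partial^2f}{\partial y^2}$, and bounds the second term by estimating $\|G''(u+e_y)(v,w)\|_{L^2(\Omega)}$ through the PDE \eqref{EqSolSeGvv}, ending with $K_M=\ell_M+\eta CC_{f,M}$. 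You instead absorb the linear term $(e_J,y_{u+e_y})_{L^2(\Omega)}$ into the running cost, note that its second $y$-derivative vanishes, and obtain a single integral representation in which the perturbed adjoint state $\varphi_{u,e}$ (solving \eqref{PertbAdjEq}, i.e.\ with $e_J$ added to the right-hand side) carries all of the $e_J$-dependence; the two representations agree by linearity of the adjoint equation. Your route is arguably tidier --- it needs only one coefficient bound and dispenses with the separate $G''$ estimate, and it also avoids a small notational wrinkle in the paper's proof, where $F(u,e)$ is written with the perturbed adjoint state even though the $e_J$-contribution is then added again through the $G''$ term. The price is that you must justify the uniform $C(\bar\Omega)$ bound on $\{\varphi_{u,e}\}$, which you do correctly: the right-hand sides $\frac{\partial L}{\partial y}(x,y_{u+e_y})+e_J$ are uniformly bounded in $L^{q}(\Omega)$ with $q=\min\{\bar{p},2\}>N/2$, and the zeroth-order coefficients stay in $[0,C_{f,M}]$, so elliptic regularity gives a constant independent of $(u,e)$. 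Since the paper also needs a uniform bound on the adjoint states to define $\ell_M$, you are not doing any extra work there.
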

\begin{proof}
Let us put the set $\mU=\bigcup_{e\in\oB_\eta(\be)}\mU_{ad}(e)$. Let
us define for $e\in B_\eta(\be)$ and $u\in \mU_{ad}(e)$ the function
\[
    F(u,e)=\frac{\partial^2L}{\partial y^2}(x,y_{u+e_y})-\varphi_{u,e}\frac{\partial^2f}{\partial y^2}(x,y_{u+e_y}).
\]
Then $F$ is well-defined due to the assumptions posed on the
functions $f$ and $L$.
In addition, there is a constant $M>0$ such that
\[
\|y_{u+e_y}\|_{L^\infty(\Omega)}+\|\varphi_{u,e}\|_{L^\infty(\Omega)}\leq M
\]
holds for all $e\in B_\eta(\be)$ and $u\in \mU_{ad}(e)$. In
addition, we can find a constant $\ell_M>0$ satisfying the condition
\[
\|F(u,e)\|_{L^\infty(\Omega)}\leq \ell_M
\]
 for all $e\in B_\eta(\be)$ and $u\in \mU_{ad}(e)$.
Consequently, for every $v,w\in L^2(\Omega)$ and $u\in\mU$, it
holds that
\begin{equation}\label{EstScdJuvw}
    |J''(u+e_y)(v,w)|=\left|\int_\Omega F(u,e)z^e_{u,v}z^e_{u,w}dx\right|
    \leq\ell_M\|z^e_{u,v}\|_{L^2(\Omega)}\|z^e_{u,w}\|_{L^2(\Omega)}.
\end{equation}
Note that $G''(u+e_y)(v,w)$ is the weak solution of
\eqref{EqSolSeGvv} with respect to $y=G(u+e_y)$ and it satisfies the
condition for some constant $C\geq0$ as follows
\begin{equation}\label{PerbEsSeG}
\begin{aligned}
    \|G''(u+e_y)(v,w)\|_{L^2(\Omega)}
    &\leq C\left\|-\frac{\partial^2f}{\partial y^2}(x,y_{u+e_y})z^e_{u,v}z^e_{u,w}\right\|_{L^2(\Omega)}\\
    &\leq C\left\|\frac{\partial^2f}{\partial y^2}(x,y_{u+e_y})\right\|_{L^\infty(\Omega)}
          \|z^e_{u,v}\|_{L^2(\Omega)}\|z^e_{u,w}\|_{L^2(\Omega)}\\
    &\leq CC_{f,M}\|z^e_{u,v}\|_{L^2(\Omega)}\|z^e_{u,w}\|_{L^2(\Omega)},
\end{aligned}
\end{equation}
where $z^e_{u,v}=z_{u+e_y,v}=G'(u+e_y)v$ and
$z^e_{u,w}=z_{u+e_y,w}=G'(u+e_y)w$. From \eqref{EstScdJuvw},
\eqref{PerbEsSeG}, and \eqref{BBConProb} it follows that
$$\begin{aligned}
    |\mJ''_u(u,e)(v,w)|
    &=\Big|J''(u+e_y)(v,w)+\big(e_J,G''(u+e_y)(v,w)\big)_{L^2(\Omega)}\Big|\\
    &\leq|J''(u+e_y)(v,w)|+\|e_J\|_{L^2(\Omega)}\|G''(u+e_y)(v,w)\|_{L^2(\Omega)}\\
    &\leq\ell_M\|z^e_{u,v}\|_{L^2(\Omega)}\|z^e_{u,w}\|_{L^2(\Omega)}
     +\eta CC_{f,M}\|z^e_{u,v}\|_{L^2(\Omega)}\|z^e_{u,w}\|_{L^2(\Omega)}\\
    &=(\ell_M+\eta CC_{f,M})\|z^e_{u,v}\|_{L^2(\Omega)}\|z^e_{u,w}\|_{L^2(\Omega)},
\end{aligned}$$
which yields \eqref{EsScmJuvw} with $K_M=\ell_M+\eta CC_{f,M}$.
\hfill$\Box$
\end{proof}

\begin{Lemma}\label{lem45}
Let $\be\in E$, $\ou\in\mU_{ad}(\be)$, and $\eta>0$ be given. Then there is a constant $K_M>0$ such that
 \[
  \|\varphi_{u,\be}-\varphi_{\ou,\be}\|_{L^\infty(\Omega)} \le K_M \|u-\ou\|_{L^1(\Omega)}
 \]
holds for all $e\in B_\eta(\be)$, $u\in \mU_{ad}(e)$.
\end{Lemma}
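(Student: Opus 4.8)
The plan is to write $w:=\varphi_{u,\be}-\varphi_{\ou,\be}$, $y_1:=y_{u+\be_y}$, $y_2:=y_{\ou+\be_y}$, and $\delta y:=y_1-y_2$, to derive a linear elliptic equation for $w$ whose right-hand side is dominated pointwise by $|\delta y|$, and then to estimate $\|\delta y\|_{L^r(\Omega)}$ by $\|u-\ou\|_{L^1(\Omega)}$ via duality. As in the proof of Lemma~\ref{lemma44}, put $\mU=\bigcup_{e\in\oB_\eta(\be)}\mU_{ad}(e)$; this set is bounded in $L^\infty(\Omega)$, so there is $M>0$ with $\|y_{u+\be_y}\|_{L^\infty(\Omega)}+\|\varphi_{u,\be}\|_{L^\infty(\Omega)}\le M$ for all $u\in\mU$, in particular for the given $u$ and for $\ou$. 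Subtracting the two adjoint equations~\eqref{PertbAdjEq} (both taken with parameter $\be$) and splitting $\frac{\partial f}{\partial y}(x,y_1)\varphi_{u,\be}-\frac{\partial f}{\partial y}(x,y_2)\varphi_{\ou,\be}=\frac{\partial f}{\partial y}(x,y_1)w+\big(\frac{\partial f}{\partial y}(x,y_1)-\frac{\partial f}{\partial y}(x,y_2)\big)\varphi_{\ou,\be}$, I find that $w$ is the weak solution of
\[
A^*w+\frac{\partial f}{\partial y}(x,y_1)\,w=g,\qquad w=0\ \mbox{on}\ \Gamma,
\]
with
\[
g=\frac{\partial L}{\partial y}(x,y_1)-\frac{\partial L}{\partial y}(x,y_2)
   -\Big(\frac{\partial f}{\partial y}(x,y_1)-\frac{\partial f}{\partial y}(x,y_2)\Big)\varphi_{\ou,\be}.
\]

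Next I would bound $g$ pointwise. By \textbf{(A1)}--\textbf{(A2)} the bounds on $\frac{\partial^2 f}{\partial y^2}$ and $\frac{\partial^2 L}{\partial y^2}$ on $\{|y|\le M\}$ make $\frac{\partial f}{\partial y}(x,\cdot)$ and $\frac{\partial L}{\partial y}(x,\cdot)$ Lipschitz there; using also $\|\varphi_{\ou,\be}\|_{L^\infty(\Omega)}\le M$, this gives $|g(x)|\le(C_{L,M}+M\,C_{f,M})\,|\delta y(x)|$ for a.a.\ $x\in\Omega$, hence $\|g\|_{L^r(\Omega)}\le C_g\|\delta y\|_{L^r(\Omega)}$ for every $r\in[1,\infty]$, with $C_g=C_{L,M}+M\,C_{f,M}$.

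The crucial step is the $L^1$-to-$L^r$ estimate for $\delta y$. Since $y_1,y_2$ solve \eqref{PerStaEqWtCd} with the same $\be_y$, the mean value theorem shows that $\delta y$ is the weak solution of $A\,\delta y+b(x)\,\delta y=u-\ou$, $\delta y=0$ on $\Gamma$, with $0\le b(x)=\frac{\partial f}{\partial y}(x,\xi)\le C_{f,M}$. Because $N\le3$ one can fix $r$ with $N/2<r<N/(N-2)$ (for $N=1$ the embedding $H^1_0(\Omega)\hookrightarrow C(\bar\Omega)$ makes the state estimate immediate, so assume $N\in\{2,3\}$). For $\psi\in L^{r'}(\Omega)$ with $r'>N/2$, let $\phi$ solve the dual problem $A^*\phi+b\phi=\psi$, $\phi=0$ on $\Gamma$; by the $L^\infty$ regularity recalled after \textbf{(A3)} (cf.\ \cite[Chapter~4]{Trolt10B}) one has $\|\phi\|_{L^\infty(\Omega)}\le C\|\psi\|_{L^{r'}(\Omega)}$. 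Testing the $\delta y$-equation with $\phi$ and the $\phi$-equation with $\delta y$ gives $\int_\Omega\delta y\,\psi\,dx=\int_\Omega(u-\ou)\phi\,dx$, so $\big|\int_\Omega\delta y\,\psi\,dx\big|\le\|u-\ou\|_{L^1(\Omega)}\|\phi\|_{L^\infty(\Omega)}\le C\|u-\ou\|_{L^1(\Omega)}\|\psi\|_{L^{r'}(\Omega)}$; taking the supremum over $\|\psi\|_{L^{r'}(\Omega)}\le1$ yields $\|\delta y\|_{L^r(\Omega)}\le C\|u-\ou\|_{L^1(\Omega)}$.

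Finally, the same $r$ satisfies $r>N/2$, so the $L^\infty$ regularity for the $w$-equation (again \cite[Chapter~4]{Trolt10B}, the coefficient $\frac{\partial f}{\partial y}(x,y_1)\ge0$ being bounded) gives $\|w\|_{L^\infty(\Omega)}\le C\|g\|_{L^r(\Omega)}$. Chaining the three estimates yields $\|\varphi_{u,\be}-\varphi_{\ou,\be}\|_{L^\infty(\Omega)}\le K_M\|u-\ou\|_{L^1(\Omega)}$ with a constant $K_M>0$ depending only on $M$, $\eta$ and the data, as claimed. I expect the main obstacle to be exactly this passage from an $L^1$ datum to an $L^\infty$ bound: it requires the exponent window $N/2<r<N/(N-2)$ to be nonempty, which holds precisely because $N\le3$, so the dimension restriction enters in an essential way.
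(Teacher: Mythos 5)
Your proof is correct. The paper itself gives no argument for Lemma~\ref{lem45}, deferring to \cite[Lemma~2.6]{CaWaWa17SICON}, and your derivation --- subtracting the two adjoint equations, bounding the resulting source term pointwise by $C(C_{L,M}+MC_{f,M})|\delta y|$, and passing from the $L^1$ control perturbation to an $L^r$ bound on $\delta y$ by transposition with the exponent window $N/2<r<N/(N-2)$ (nonempty exactly because $N\le 3$) --- is precisely the standard argument behind that cited lemma, with all constants uniform over $u\in\bigcup_{e\in\oB_\eta(\be)}\mU_{ad}(e)$ as required.
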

\begin{proof}
The proof is similar to the proof of
\cite[Lemma~2.6]{CaWaWa17SICON}. \qed
\end{proof}

\begin{Lemma}\label{LemEsFDrMJ}
Let $\be\in E$, $\ou\in\mU_{ad}(\be)$, and $\eta>0$ be given. Then
there is a constant $K_M>0$ such that
 \[
  \|\varphi_{\ou,e}-\varphi_{\ou,\be}\|_{L^\infty(\Omega)} \le K_M \|e-\be\|_E
 \]
holds for all $e\in B_\eta(\be)$.
\end{Lemma}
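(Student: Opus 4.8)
The plan is to subtract the two adjoint equations and show that the difference $w:=\varphi_{\ou,e}-\varphi_{\ou,\be}$ solves a single linear elliptic problem whose datum is controlled by $\|e-\be\|_E$; an $L^\infty$-estimate for that problem then gives the claim. By \eqref{PertbAdjEq} with the control frozen at $\ou$, the adjoint states $\varphi_{\ou,e}$ and $\varphi_{\ou,\be}$ are the weak solutions of
\begin{align*}
    A^*\varphi_{\ou,e}+\tfrac{\partial f}{\partial y}(x,y_{\ou+e_y})\varphi_{\ou,e}
      &=\tfrac{\partial L}{\partial y}(x,y_{\ou+e_y})+e_J,\\
    A^*\varphi_{\ou,\be}+\tfrac{\partial f}{\partial y}(x,y_{\ou+\be_y})\varphi_{\ou,\be}
      &=\tfrac{\partial L}{\partial y}(x,y_{\ou+\be_y})+\be_J,
\end{align*}
both with homogeneous Dirichlet data. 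Adding and subtracting $\tfrac{\partial f}{\partial y}(x,y_{\ou+e_y})\varphi_{\ou,\be}$ shows that $w$ is the weak solution of $A^*w+\tfrac{\partial f}{\partial y}(x,y_{\ou+e_y})w=g$ on $\Omega$, $w=0$ on $\Gamma$, with
\[
    g=\Big(\tfrac{\partial f}{\partial y}(x,y_{\ou+\be_y})-\tfrac{\partial f}{\partial y}(x,y_{\ou+e_y})\Big)\varphi_{\ou,\be}
      +\Big(\tfrac{\partial L}{\partial y}(x,y_{\ou+e_y})-\tfrac{\partial L}{\partial y}(x,y_{\ou+\be_y})\Big)+(e_J-\be_J).
\]

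Next I would bound $g$ in $L^2(\Omega)$. As in the proof of Lemma~\ref{lemma44}, since $e\in B_\eta(\be)$ the controls $\ou+e_y$ range over a bounded subset of $L^2(\Omega)$, so there is a uniform $M>0$ with $\|y_{\ou+e_y}\|_{L^\infty(\Omega)},\,\|\varphi_{\ou,\be}\|_{L^\infty(\Omega)}\le M$. Because $G$ is of class $\mC^2$ from $L^2(\Omega)$ to $Y$ (recall $2>N/2$ as $N\le3$), the control-to-state map is Lipschitz on bounded sets (cf. Lemma~\ref{Lem25CaEx}), giving $\|y_{\ou+e_y}-y_{\ou+\be_y}\|_{L^\infty(\Omega)}\le\|y_{\ou+e_y}-y_{\ou+\be_y}\|_Y\le C\|e_y-\be_y\|_{L^2(\Omega)}$. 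The bounds on $\tfrac{\partial^2f}{\partial y^2}$ and $\tfrac{\partial^2L}{\partial y^2}$ in \textbf{(A1)} and \textbf{(A2)} make $\tfrac{\partial f}{\partial y}(x,\cdot)$ and $\tfrac{\partial L}{\partial y}(x,\cdot)$ Lipschitz on $[-M,M]$ with constants $C_{f,M}$ and $C_{L,M}$ independent of $x$; together with the $L^\infty$-bound on $\varphi_{\ou,\be}$ this bounds the first two summands of $g$ in $L^\infty(\Omega)$ by $C\|e_y-\be_y\|_{L^2(\Omega)}$. Since $\Omega$ is bounded and the last summand is $e_J-\be_J\in L^2(\Omega)$, it follows that $\|g\|_{L^2(\Omega)}\le C\big(\|e_y-\be_y\|_{L^2(\Omega)}+\|e_J-\be_J\|_{L^2(\Omega)}\big)\le C\|e-\be\|_E$.

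Finally, since $\tfrac{\partial f}{\partial y}(x,y_{\ou+e_y})\ge0$ by \textbf{(A1)} and $\|\tfrac{\partial f}{\partial y}(x,y_{\ou+e_y})\|_{L^\infty(\Omega)}\le C_{f,M}$ uniformly in $e$, the operator $A^*+\tfrac{\partial f}{\partial y}(x,y_{\ou+e_y})$ is coercive and the $L^\infty$-regularity for the Dirichlet problem from \cite[Chapter~4]{Trolt10B} applies with a constant independent of $e\in B_\eta(\be)$: as $2>N/2$, the datum $g\in L^2(\Omega)$ yields $\|w\|_{L^\infty(\Omega)}\le C\|g\|_{L^2(\Omega)}$. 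Chaining the two estimates gives $\|\varphi_{\ou,e}-\varphi_{\ou,\be}\|_{L^\infty(\Omega)}\le K_M\|e-\be\|_E$, as claimed. I expect the only delicate point to be the uniformity of $M$ and of the elliptic-regularity constant over the whole ball $B_\eta(\be)$, which is secured exactly as in Lemma~\ref{lemma44} by working on the bounded set $\bigcup_{e\in\oB_\eta(\be)}\mU_{ad}(e)$; everything else is routine and parallels Lemma~\ref{lem45}.
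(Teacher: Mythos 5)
Your proposal is correct and follows essentially the same route as the paper: subtract the two adjoint equations (with the $\tfrac{\partial f}{\partial y}(x,y_{\ou+e_y})\varphi_{\ou,\be}$ add-and-subtract), bound the resulting right-hand side in $L^2(\Omega)$ via the Lipschitz estimates for $\tfrac{\partial f}{\partial y}$, $\tfrac{\partial L}{\partial y}$ and the control-to-state map, and conclude with the $L^\infty$-regularity of the Dirichlet problem. Your extra remarks on the uniformity of the constants over $B_\eta(\be)$ are a welcome clarification but do not change the argument.
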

\begin{proof}
Since $\varphi_{\ou,e}$ and $\varphi_{\ou,\be}$ are the weak
solutions of \eqref{PertbAdjEq} with respect to $e$ and $\be$. Thus,
we have
$$\begin{cases}
\begin{aligned}
    &A^*(\varphi_{\ou,e}-\varphi_{\ou,\be})+
     \dfrac{\partial f}{\partial y}(x,y_{\ou+e_y})(\varphi_{\ou,e}-\varphi_{\ou,\be})
     =\dfrac{\partial L}{\partial y}(x,y_{\ou+e_y})-\dfrac{\partial L}{\partial y}(x,y_{\ou+\be_y})\\
    &\hspace{4.8cm}-\left(\dfrac{\partial f}{\partial y}(x,y_{\ou+e_y})
                    -\dfrac{\partial f}{\partial y}(x,y_{\ou+\be_y})\right)\varphi_{\ou,\be}
                    +e_J-\be_J\ &&\mbox{in}\ \Omega\\
    &\hspace{6cm}\varphi_{\ou,e}-\varphi_{\ou,\be}=0               &&\mbox{on}\ \Gamma,
\end{aligned}
\end{cases}$$
By our assumptions, there exist $C>0$, $\ell_1>0$, $\ell_2>0$ such
that
$$\begin{aligned}
    \|\varphi_{\ou,e}-\varphi_{\ou,\be}\|_{L^\infty(\Omega)}
    &\leq C\left(\left\|\frac{\partial L}{\partial y}(\cdot,y_{\ou+e_y})
     -\frac{\partial L}{\partial y}(\cdot,y_{\ou+\be_y})\right\|_{L^2(\Omega)}\right.\\
    &\quad+\left.\left\|\dfrac{\partial f}{\partial y}(\cdot,y_{\ou+e_y})
     -\dfrac{\partial f}{\partial y}(\cdot,y_{\ou+\be_y})\right\|_{L^2(\Omega)}\|\varphi_{\ou,\be}\|_{L^2(\Omega)}
     +\|e_J-\be_J\|_{L^2(\Omega)}\right)\\
    &\leq C\big(\ell_1\|y_{\ou+e_y}-y_{\ou+\be_y}\|_{L^2(\Omega)}+\|e_J-\be_J\|_{L^2(\Omega)}\big)\\
    &\leq C\big(\ell_1\ell_2\|e_y-\be_y\|_{L^2(\Omega)}+\|e_J-\be_J\|_{L^2(\Omega)}\big)\\
    &\leq K_M\|e-\be\|_E,
\end{aligned}$$
where $K_M:=C\max\{\ell_1\ell_2,1\}$. \qed
\end{proof}

\begin{Lemma}\label{lem48}
Let $\ou\in\mU_{ad}(\be)$ and $\rho>0$ be given. Then, there exists
$c>0$ such that for all $\hu\in L^\infty(\Omega)$ with
$\|\hu-\ou\|_{L^2(\Omega)}<\rho$ it holds
\[
 \|z_{\hu,w}^\be\|_{L^2(\Omega)} \le c \|w\|_{L^1(\Omega)},~\forall w\in L^2(\Omega),
\]
and
\[
 \|z_{\ou,v}^\be-z_{\hu,v}^\be\|_{L^2(\Omega)}\leq c\|z_{\ou,v}^\be\|_{L^2(\Omega)}\|\hu-\ou_{\be}\|_{L^2(\Omega)},
\]
where we define $z_{\hu,w}^\be:=z_{\hu+\be_y,w}=G'(\hu+\be_y)w$, and
similarly for $z_{\ou,v}^\be$ and $z_{\hu,v}^\be$.
\end{Lemma}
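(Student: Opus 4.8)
The plan is to prove both inequalities by a duality argument against the adjoint of the linearized operator, exploiting the sign condition $\partial f/\partial y\geq0$ from \textbf{(A1)} together with the regularity $L^2(\Omega)\hookrightarrow C(\bar\Omega)$ available since $2>N/2$ for $N\in\{1,2,3\}$. For the first estimate, write $z:=z^\be_{\hu,w}$, which solves $Az+c\,z=w$ with $c:=\frac{\partial f}{\partial y}(\cdot,y_{\hu+\be_y})\geq0$, and represent its norm as $\|z\|_{L^2(\Omega)}=\sup_{\|g\|_{L^2(\Omega)}\leq1}\int_\Omega zg\,dx$. For fixed $g$ I would introduce the solution $\psi$ of the adjoint problem $A^*\psi+c\,\psi=g$ in $\Omega$, $\psi=0$ on $\Gamma$, so that testing the equation for $z$ with $\psi$ (equivalently, the equation for $\psi$ with $z$) yields $\int_\Omega zg\,dx=\int_\Omega w\psi\,dx\leq\|w\|_{L^1(\Omega)}\|\psi\|_{C(\bar\Omega)}$. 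Since $g\in L^2(\Omega)$ and $2>N/2$, the cited regularity theory gives $\psi\in C(\bar\Omega)$ with $\|\psi\|_{C(\bar\Omega)}\leq C\|g\|_{L^2(\Omega)}$; taking the supremum over $g$ produces $\|z\|_{L^2(\Omega)}\leq C\|w\|_{L^1(\Omega)}$, which is the first claim.

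The crucial point is the \emph{uniformity} of $C$ over the ball $\{\hu:\|\hu-\ou\|_{L^2(\Omega)}<\rho\}$, and this is exactly where the hypotheses are used. Because $\|\hu-\ou\|_{L^2(\Omega)}<\rho$, the data $\hu+\be_y$ stays bounded in $L^2(\Omega)$, so by the state regularity estimate the family $\{y_{\hu+\be_y}\}$ is bounded in $C(\bar\Omega)$ by a single constant $M$; assumption \textbf{(A1)} then gives $0\leq c\leq C_{f,M}$ uniformly. The coercivity constant of $A^*+c$ is bounded below by $\lambda_A>0$ from \textbf{(A3)} independently of $\hu$, and the constant in the $L^2\to C(\bar\Omega)$ estimate depends only on $\lambda_A$, the uniform bound $C_{f,M}$ on $c$, $\Omega$ and $N$. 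Hence $C$ can be chosen independent of $\hu$, which is the main obstacle to overcome and the reason the sign condition $\partial f/\partial y\geq0$ is indispensable.

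For the second estimate, set $z_1:=z^\be_{\ou,v}$, $z_2:=z^\be_{\hu,v}$, $c_1:=\frac{\partial f}{\partial y}(\cdot,y_{\ou+\be_y})$, $c_2:=\frac{\partial f}{\partial y}(\cdot,y_{\hu+\be_y})$. Subtracting the two linearized equations shows that $z_1-z_2$ solves $A(z_1-z_2)+c_2(z_1-z_2)=-(c_1-c_2)z_1$, i.e.\ $z_1-z_2=z^\be_{\hu,w}$ with $w=-(c_1-c_2)z_1$. Applying the first estimate and then Hölder's inequality gives $\|z_1-z_2\|_{L^2(\Omega)}\leq c\|(c_1-c_2)z_1\|_{L^1(\Omega)}\leq c\|c_1-c_2\|_{L^2(\Omega)}\|z_1\|_{L^2(\Omega)}$. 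It remains to bound $\|c_1-c_2\|_{L^2(\Omega)}$: by the mean value theorem and the bound $|\partial^2 f/\partial y^2|\leq C_{f,M}$ from \textbf{(A1)}, $|c_1-c_2|\leq C_{f,M}|y_{\ou+\be_y}-y_{\hu+\be_y}|$, while the difference of states is itself controlled by the same duality argument applied to the (linearized) state equation (or by Lemma~\ref{Lem25CaEx}), giving $\|y_{\ou+\be_y}-y_{\hu+\be_y}\|_{L^2(\Omega)}\leq C\|\ou-\hu\|_{L^2(\Omega)}$. Combining these yields $\|z_1-z_2\|_{L^2(\Omega)}\leq c'\|z_1\|_{L^2(\Omega)}\|\hu-\ou\|_{L^2(\Omega)}$, which is the second claim (reading $\ou_\be$ in the statement as $\ou$). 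All constants remain uniform over the $\rho$-ball by the argument of the preceding paragraph, so a single $c>0$ serves for both inequalities.
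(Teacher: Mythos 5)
Your proof is correct. Note that the paper itself gives no argument here: its ``proof'' of this lemma is a single citation to \cite[Lemma~4.2]{QuiWch17}, so there is nothing in the text to compare against line by line. Your duality argument (test the linearized equation against the solution $\psi$ of the adjoint problem $A^*\psi+c\,\psi=g$, use the $L^2(\Omega)\to C(\bar\Omega)$ regularity available because $2>N/2$ for $N\le 3$, and exploit $c=\partial f/\partial y\ge 0$ to keep the constant uniform) is the standard mechanism behind such $L^1\to L^2$ estimates in the bang-bang control literature and is almost certainly what the cited lemma does; your derivation of the second inequality by rewriting $z^\be_{\ou,v}-z^\be_{\hu,v}$ as the solution of the linearized equation with right-hand side $-(c_1-c_2)z^\be_{\ou,v}$ and then invoking the first estimate together with the mean value theorem and the Lipschitz dependence of the state on the control is likewise the expected route. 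You are also right to flag the uniformity of the constants over the $L^2$-ball (via the uniform $C(\bar\Omega)$ bound on the states and hence the uniform bound $C_{f,M}$) and to read the $\ou_\be$ in the second inequality as $\ou$, which is evidently a typo in the statement. In short, your proposal supplies a complete, self-contained proof of a result the paper only references.
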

\begin{proof}
It follows from \cite[Lemma 4.2]{QuiWch17}. $\hfill\Box$
\end{proof}

\medskip
We also need the following extension of Proposition~\ref{PropFstCd}
to perturbed feasible sets.

\begin{Lemma}\label{LemEstFrmJ}
Assume that {\rm\textbf{(A4)}} holds at $\ou_{\be}$. Assume further
that there is $\sigma>0$ such that
\[
 \beta + \be_\beta - (\alpha+ \be_\alpha) \ge \sigma \text{ a.e.\ on } \Omega.
\]
Take $0<\eta<\sigma/4$. Let $e\in B_\eta(\be)$ and let
$\ou_e\in\mU_{ad}(e)$ satisfy the first-order optimality system
\eqref{PertbStateEq}-\eqref{PertbVarIneq}. Then there are $c>0$ and
$\kappa'>0$ independent of $e$ and $u_e\in \mU_{ad}(\be)$ such that
$$\begin{aligned}
    &\big(\mJ'_u(\ou_e,e) - \mJ'_u(\ou_\be,\be)\big)(\ou_\be-\ou_e)\\
    &\qquad\ge\kappa' \|\ou_e-\ou_\be \|_{L^1(\Omega)}^{1+\frac{1}{\ae}}+ \frac12 \mJ'_u(\ou_\be,\be)(u_e-\ou_\be)\\
    &\qquad\quad- c \left( \|e-\be\|_E^{\frac{1}{\ae}}
    + \|\varphi_{\ou_e,e}-\varphi_{\ou_\be,\be}\|_{L^\infty(\Omega)}
    + \|\ou_e-\ou_\be\|_{L^1(\Omega)}\right) \|e-\be\|_E,
\end{aligned}$$
and $\|\ou_e-u_e\|_{L^\infty(\Omega)}\le \|e-\be\|_E$.
\end{Lemma}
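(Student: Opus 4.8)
The plan is to compare the perturbed minimizer $\ou_e$ with a carefully built feasible control $u_e\in\mU_{ad}(\be)$ and to exploit the bang--bang structure of $\ou_e$. Since $\zeta=0$ here, \eqref{PerbDermJ} identifies $\mJ'_u(\ou_e,e)$ and $\mJ'_u(\ou_{\be},\be)$ with $\varphi_{\ou_e,e}$ and $\varphi_{\ou_{\be},\be}$ in the $L^2(\Omega)$--pairing, so the left--hand side equals $Q:=\int_\Omega(\varphi_{\ou_e,e}-\varphi_{\ou_{\be},\be})(\ou_{\be}-\ou_e)\,dx$, which I split as $Q=A+B$ with
\[
 A=\int_\Omega\varphi_{\ou_e,e}(\ou_{\be}-\ou_e)\,dx,\qquad
 B=\int_\Omega\varphi_{\ou_{\be},\be}(\ou_e-\ou_{\be})\,dx.
\]
Writing $a=\alpha+\be_\alpha$, $b=\beta+\be_\beta$ and $\tau=\|e-\be\|_E$, the decisive choice is $u_e:=\ou_{\be}$ on the small--deviation set $\{x:|\ou_e(x)-\ou_{\be}(x)|\le\tau\}$ and $u_e:=\proj_{[a,b]}(\ou_e)$ on its complement. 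As $\ou_e\in\mU_{ad}(e)$ takes values within $\tau$ of $[a,b]$, one checks at once that $u_e\in\mU_{ad}(\be)$ and $\|\ou_e-u_e\|_{L^\infty(\Omega)}\le\tau$, which is the second assertion.

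For $B$ I insert $u_e$, $B=\int_\Omega\varphi_{\ou_{\be},\be}(u_e-\ou_{\be})\,dx+\int_\Omega\varphi_{\ou_{\be},\be}(\ou_e-u_e)\,dx$, and apply the first--order growth \eqref{FOrdL1} of Proposition~\ref{PropFstCd} (read for the reference functional $\mJ'_u(\cdot,\be)$) to the first integral, namely $\int_\Omega\varphi_{\ou_{\be},\be}(u_e-\ou_{\be})\,dx\ge\kappa\|u_e-\ou_{\be}\|_{L^1(\Omega)}^{1+1/\ae}$; retaining one half of it yields exactly the term $\tfrac12\mJ'_u(\ou_{\be},\be)(u_e-\ou_{\be})$ of the statement. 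I then recombine the leftover $\int_\Omega\varphi_{\ou_{\be},\be}(\ou_e-u_e)\,dx$ with $A$ and substitute $\varphi_{\ou_{\be},\be}=\varphi_{\ou_e,e}+(\varphi_{\ou_{\be},\be}-\varphi_{\ou_e,e})$, which produces $\int_\Omega\varphi_{\ou_e,e}(\ou_{\be}-u_e)\,dx$ plus a remainder controlled by $\|\varphi_{\ou_e,e}-\varphi_{\ou_{\be},\be}\|_{L^\infty(\Omega)}\|\ou_e-u_e\|_{L^1(\Omega)}\le|\Omega|\,\|\varphi_{\ou_e,e}-\varphi_{\ou_{\be},\be}\|_{L^\infty(\Omega)}\,\tau$, the second error term. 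Finally $\|u_e-\ou_{\be}\|_{L^1(\Omega)}\ge\|\ou_e-\ou_{\be}\|_{L^1(\Omega)}-|\Omega|\tau$ together with an elementary Young/convexity inequality turns $\tfrac{\kappa}{2}\|u_e-\ou_{\be}\|^{1+1/\ae}_{L^1(\Omega)}$ into $\kappa'\|\ou_e-\ou_{\be}\|^{1+1/\ae}_{L^1(\Omega)}$ up to an error of order $\tau^{1/\ae}\tau$, the first error term.

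The heart of the proof, and the step I expect to be hardest, is the sign inequality $\int_\Omega\varphi_{\ou_e,e}(\ou_{\be}-u_e)\,dx\ge0$. Here I invoke the variational inequality \eqref{PertbVarIneq}: where $\varphi_{\ou_e,e}>0$ the control $\ou_e$ equals its lower perturbed bound $a+e_\alpha-\be_\alpha$ and where $\varphi_{\ou_e,e}<0$ its upper perturbed bound $b+e_\beta-\be_\beta$. On the small--deviation set $u_e=\ou_{\be}$ makes the integrand vanish; on its complement a value of $\ou_e$ at the \emph{same} bound as $\ou_{\be}$ would differ from $\ou_{\be}$ by at most $\tau$ and therefore cannot occur there, so only genuine bang \emph{flips} survive, for which $\varphi_{\ou_e,e}$ and $\ou_{\be}-u_e$ carry the same sign and the integrand is nonnegative. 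This is exactly where the gap hypothesis $b-a\ge\sigma$ with $\eta<\sigma/4$ enters: it keeps the perturbed admissible interval nondegenerate and cleanly separates ``bound shifts'' (deviation $\le\tau$) from ``flips'' (deviation $\ge\sigma-\tau$). The main subtlety to watch is that the naive choice $u_e=\proj_{[a,b]}(\ou_e)$ \emph{fails}: on the active set it injects a spurious term of exact order $\tau$ (not $o(\tau)$) into $\mJ'_u(\ou_{\be},\be)(u_e-\ou_{\be})$ that none of the admissible error terms can absorb, and replacing $\ou_e$ by $\ou_{\be}$ on the small--deviation set is precisely what removes it. Assembling the three estimates gives the asserted inequality; in fact the third error term $\|\ou_e-\ou_{\be}\|_{L^1(\Omega)}\tau$ is not needed and could be dropped.
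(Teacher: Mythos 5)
Your proof is correct, and it actually delivers a slightly stronger conclusion (the third error term $\|\ou_e-\ou_\be\|_{L^1(\Omega)}\|e-\be\|_E$ is indeed not needed on your route). The skeleton is the one the paper uses — build a feasible $u_e\in\mU_{ad}(\be)$ with $\|\ou_e-u_e\|_{L^\infty(\Omega)}\le\|e-\be\|_E$, apply Proposition~\ref{PropFstCd} to $u_e$ while retaining half of $\mJ'_u(\ou_\be,\be)(u_e-\ou_\be)$, and convert $\|u_e-\ou_\be\|_{L^1(\Omega)}$ back into $\|\ou_e-\ou_\be\|_{L^1(\Omega)}$ by the convexity inequality at the cost of $\|e-\be\|_E^{1+1/\ae}$ — but the central step is handled genuinely differently. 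The paper constructs a \emph{second} auxiliary control $u_\be\in\mU_{ad}(e)$ (both $u_e$ and $u_\be$ being defined by shifting on $\Omega_\sigma=\{|\ou_e-\ou_\be|<\sigma/2\}$ and projecting off it), discards $\mJ'_u(\ou_e,e)(u_\be-\ou_e)\ge0$ via the variational inequality \eqref{PertbVarIneq}, and controls the leftover cross terms through the antisymmetry $u_e-\ou_e=-(u_\be-\ou_\be)$ on $\Omega_\sigma$ plus a Tchebyshev bound $|\Omega\setminus\Omega_\sigma|\le 2\sigma^{-1}\|\ou_e-\ou_\be\|_{L^1(\Omega)}$; that measure estimate is precisely the source of the third error term. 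You avoid the second control entirely by proving the pointwise sign inequality $\varphi_{\ou_e,e}(\ou_\be-u_e)\ge0$ a.e., which rests on the bang-bang characterization of $\ou_e$ with respect to $\varphi_{\ou_e,e}$ (legitimate here since $\zeta=0$, by the pointwise form of \eqref{PertbVarIneq}), the bang-bang property of $\ou_\be$ coming from \textbf{(A4)}, and the gap $\beta+\be_\beta-(\alpha+\be_\alpha)\ge\sigma$ to separate bound shifts from flips. What your route buys is a cleaner error budget and one fewer construction; what it costs is a heavier reliance on the pointwise structure of the perturbed optimizer, where the paper only needs the variational inequality in integrated form. Your observation that the naive choice $u_e=\proj_{[a,b]}(\ou_e)$ injects an uncompensated term of exact order $\|e-\be\|_E$ on the small-deviation set is accurate, and your $u_e$ still has the two properties ($u_e\in\mU_{ad}(\be)$ and $\|\ou_e-u_e\|_{L^\infty(\Omega)}\le\|e-\be\|_E$) that the proof of Theorem~\ref{ThmStabKKT} subsequently uses.
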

\begin{proof}
Due to the perturbation in the feasible set, we have
$\ou_\be\not\in\mU_{ad}(e)$ and $\ou_e\not\in \mU_{ad}(\be)$ in
general. First, we construct controls $u_e \approx \ou_e$ with
$u_e\in \mU_{ad}(\be)$ and $u_\be \approx \ou_\be$ with $u_\be\in
\mU_{ad}(e)$. Let us define
 \[
  \Omega_\sigma:=\left\{x\in\Omega\Bst |\ou_e-\ou_\be| <\frac\sigma2\right\}.
 \]
 Then on $\Omega_\sigma$ we have the implications
 \[
  \ou_\be = \alpha + \be_\alpha \ \Longrightarrow \ u_e \le \beta + e_\beta - \frac\sigma4,
 \]
 and
 \[
  \ou_\be = \beta +\be_\beta \ \Longrightarrow \ u_e \ge \alpha + e_\alpha + \frac\sigma4.
 \]
Let us define
\[
 u_e:= \chi_{ \Omega_\sigma \cap \Omega_1(\ou_\be,\be)}  ( \ou_e - (e_\alpha-\be_\alpha))
  + \chi_{ \Omega_\sigma \cap \Omega_3(\ou_\be,\be)}  ( \ou_e - (e_\beta-\be_\beta))
  + \chi_{\Omega \setminus \Omega_\sigma} \proj_{\mU_{ad}(\be)}(\ou_e),
\]
and
\[
 u_\be := \chi_{ \Omega_\sigma \cap \Omega_1(\ou_\be,\be)}  ( \ou_\be + (e_\alpha-\be_\alpha))
  + \chi_{ \Omega_\sigma \cap \Omega_3(\ou_\be,\be)}  ( \ou_\be + (e_\beta-\be_\beta))
  + \chi_{\Omega \setminus \Omega_\sigma} \proj_{\mU_{ad}(e)}(\ou_\be).
\]
Due to the definition of $\Omega_\sigma$ it holds $u_e\in \mU_{ad}(\be)$ and $u_\be\in \mU_{ad}(e)$.
In addition, we have the important relation
\[
 u_e - \ou_e = - (u_\be - \ou_\be)  \text{ on } \Omega_\sigma.
\]
Since the projection is Lipschitz continuous with respect to changes of upper and lower bounds,
we have
\[
 \|u_e - \ou_e\|_{L^\infty(\Omega)} , \|u_\be - \ou_\be\|_{L^\infty(\Omega)} \le \|e-\be\|_E.
\]
Using these feasible approximations, by Proposition \ref{PropFstCd}
and \eqref{PertbVarIneq}--\eqref{PerbDermJ} we get
\begin{multline}\label{eq463}
\begin{aligned}
    &\quad\,\big(\mJ'_u(\ou_e,e) - \mJ'_u(\ou_\be,\be)\big)(\ou_\be-\ou_e)\\
    &= \mJ'_u(\ou_e,e)(\ou_\be-u_\be+u_\be-\ou_e)  - \mJ'_u(\ou_\be,\be)(\ou_\be-u_e+u_e-\ou_e) \\
    &\ge\mJ'_u(\ou_e,e)(\ou_\be-u_\be) - \mJ'_u(\ou_\be,\be)(u_e-\ou_e)
     + \frac\kappa2 \|u_e-\ou_\be\|_{L^1(\Omega)}^{1+\frac1\ae} + \frac12 \mJ'_u(\ou_\be,\be)(u_e-\ou_\be).
\end{aligned}
\end{multline}
We can rewrite
\begin{multline*}
\begin{aligned}
    &\quad\;\mJ'_u(\ou_e,e)(\ou_\be-u_\be) - \mJ'_u(\ou_\be,\be)(u_e-\ou_e)\\
    &=\big(\mJ'_u(\ou_e,e) - \mJ'_u(\ou_\be,\be)\big)d
     + \mJ'_u(\ou_e,e)( \chi_{\Omega \setminus \Omega_\sigma} (\ou_\be-u_\be))
     - \mJ'_u(\ou_\be,\be)( \chi_{\Omega \setminus \Omega_\sigma} (u_e-\ou_e)),
\end{aligned}
\end{multline*}
where
\[
 d:= \chi_{\Omega_\sigma}(\ou_\be-u_\be)=\chi_{\Omega_\sigma}(u_e-\ou_e)
\]
satisfying $\|d\|_{L^\infty(\Omega) } \le \|e-\be\|_E$. Due to
Tchebyshev's inequality, the measure of
$\Omega\setminus\Omega_\sigma$ is bounded by $2\sigma^{-1}
\|\ou_e-\ou_\be\|_{L^1(\Omega)}$. Then we can estimate with $c>0$
independent of $e$ as follows
$$\begin{aligned}
    &\big|\mJ'_u(\ou_e,e)(\ou_\be-u_\be) - \mJ'_u(\ou_\be,\be)(u_e-\ou_e)\big|\\
    &\qquad\le |\Omega|\cdot \|\varphi_{\ou_e,e}-\varphi_{\ou_\be,\be}\|_{L^\infty(\Omega)} \|e-\be\|_E \\
    &\qquad\quad +\big(\|\varphi_{\ou_e,e}-\varphi_{\ou_\be,\be}\|_{L^\infty(\Omega)}
     +2\|\varphi_{\ou_\be,\be}\|_{L^\infty(\Omega)}\big)2\sigma^{-1}\|\ou_e-\ou_\be\|_{L^1(\Omega)}\|e-\be\|_E\\
    &\qquad\le c\Big( \|\varphi_{\ou_e,e}-\varphi_{\ou_\be,\be}\|_{L^\infty(\Omega)}
     +\big(1+\|\varphi_{\ou_e,e}-\varphi_{\ou_\be,\be}\|_{L^\infty(\Omega)}\big)\|\ou_e-\ou_\be\|_{L^1(\Omega)}\Big)
     \|e-\be\|_E.
\end{aligned}$$
Since $\|\ou_e-\ou_\be\|_{L^1(\Omega)}$ is uniformly bounded with
respect to $e$ due to the presence of the control constraints, we
can simplify this inequality to
\begin{equation}\label{eq464}
\begin{aligned}
    &|\mJ'_u(\ou_e,e)(\ou_e-u_e) - \mJ'_u(\ou_\be,\be)(\ou_e-u_\be)|\\
    &\qquad\le c \left( \|\varphi_{\ou_e,e}-\varphi_{\ou_\be,\be}\|_{L^\infty(\Omega)}
    +\|\ou_e-\ou_\be\|_{L^1(\Omega)}\right) \|e-\be\|_E.
\end{aligned}
\end{equation}
It remains to develop a lower bound of $\|u_e-\ou_\be\|_{L^1(\Omega)}^{1+\frac1\ae}$.
By construction of $u_e$, we have
\[
 \|\ou_e-\ou_\be \|_{L^1(\Omega)} \le \|u_e-\ou_\be\|_{L^1(\Omega)} + |\Omega|\cdot \|e-\be\|_E.
\]
This implies
\begin{equation}\label{eq465}
  \|\ou_e-\ou_\be\|_{L^1(\Omega)}^{1+\frac{1}{\ae}}
  \le 2^{\frac1\ae}\left(\|u_e-\ou_\be\|_{L^1(\Omega)}^{1+\frac1\ae}
  + |\Omega|^{1+\frac1\ae}\|e-\be\|_E^{1+\frac1\ae}\right).
\end{equation}
Collecting the inequalities \eqref{eq463}--\eqref{eq465} yields
$$\begin{aligned}
    &\big(\mJ'_u(\ou_e,e) - \mJ'_u(\ou_\be,\be)\big)(\ou_\be-\ou_e)\\
    &\qquad\ge\kappa' \|\ou_e-\ou_\be \|_{L^1(\Omega)}^{1+\frac{1}{\ae}}+ \frac12 \mJ'_u(\ou_\be,\be)(u_e-\ou_\be)\\
    &\qquad\quad-c\left( \|e-\be\|_E^{\frac{1}{\ae}}
     + \|\varphi_{\ou_e,e}-\varphi_{\ou_\be,\be}\|_{L^\infty(\Omega)}
     + \|\ou_e-\ou_\be\|_{L^1(\Omega)}\right) \|e-\be\|_E
\end{aligned}$$
with $\kappa':= \kappa 2^{1-\frac1\ae}$ and $c$ independent of $e,\ou_e$.
$\hfill\Box$
\end{proof}

\begin{Theorem}\label{ThmStabKKT}
Let $\ou_{\be}$ be a strict bang-bang solution of
problem~\eqref{BBConProb} for $\be\in E$ and assume that
{\rm\textbf{(A1)}-\textbf{(A4)}} hold. Assume that the second-order
condition \eqref{SOrdCd} holds at $\ou_\be$. Then, there exist
$\eta>0$ and $c>0$ such that
\begin{equation}\label{MnHldrEst}
    \|\ou_e-\ou_\be\|_{L^1(\Omega)}\le c\|e-\be\|_E^{\min\{\ae,1\}}
\end{equation}
for all $e\in B_\eta(\be)$ and for any $\ou_e\in\mU_{ad}(e)\cap
B^{p_0}_\eta(\ou_\be)$ satisfying the first-order optimality system
\eqref{PertbStateEq}-\eqref{PertbVarIneq}.
\end{Theorem}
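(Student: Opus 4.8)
The plan is to control the single scalar quantity
$D:=\big(\mJ'_u(\ou_e,e)-\mJ'_u(\ou_\be,\be)\big)(\ou_\be-\ou_e)$,
bounding it from below by the bang-bang growth already packaged in Lemma~\ref{LemEstFrmJ} and from above by the second-order condition \eqref{SOrdCd}. First I would fix $e\in B_\eta(\be)$ and a KKT point $\ou_e\in\mU_{ad}(e)\cap B^{p_0}_\eta(\ou_\be)$, set $h:=\ou_e-\ou_\be$, and note that $\|h\|_{L^1(\Omega)}$ is small by Theorem~\ref{ThmSolSbL2}, so all perturbation lemmas apply. Since the separation hypothesis of Lemma~\ref{LemEstFrmJ} holds at $\be$, I apply it to produce the feasible modification $u_e\in\mU_{ad}(\be)$ with $\|\ou_e-u_e\|_{L^\infty(\Omega)}\le\|e-\be\|_E$ and the lower bound
\[
 D\ge\kappa'\|h\|_{L^1(\Omega)}^{1+\frac1\ae}+\tfrac12\mJ'_u(\ou_\be,\be)(u_e-\ou_\be)-cR\|e-\be\|_E,
\]
where $R=\|e-\be\|_E^{1/\ae}+\|\varphi_{\ou_e,e}-\varphi_{\ou_\be,\be}\|_{L^\infty(\Omega)}+\|h\|_{L^1(\Omega)}$. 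Because $u_e\in\mU_{ad}(\be)$, the variational inequality \eqref{PertbVarIneq} at $e=\be$ combined with \eqref{PerbDermJ} (recall $\zeta=0$) gives $\mJ'_u(\ou_\be,\be)(u_e-\ou_\be)\ge0$, so the middle term may simply be dropped.

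For the matching upper bound I would expand $D$ by the mean value theorem in the control variable,
\[
 D=-\int_0^1\mJ''_u(\ou_\be+th,e)(h,h)\,dt+\big(\mJ'_u(\ou_\be,e)-\mJ'_u(\ou_\be,\be)\big)(\ou_\be-\ou_e).
\]
By \eqref{PerbDermJ} the last term equals $\int_\Omega(\varphi_{\ou_\be,e}-\varphi_{\ou_\be,\be})(\ou_\be-\ou_e)\,dx$ and is bounded by $K_M\|e-\be\|_E\|h\|_{L^1(\Omega)}$ via Lemma~\ref{LemEsFDrMJ}. The heart of the argument is to show the Hessian integral is nonnegative up to controllable errors. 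Here I would decompose $h=h_1+h_2$ with $h_2=h\,\chi_{\{|\varphi_{\ou_\be}|>\tau\}}$: on the support of $h_1$ the sign conditions of the critical cone $C^\tau_{\ou_\be,2}$ from \eqref{ExCriConce} hold up to an $L^\infty$-error of order $\|e-\be\|_E$, so the perturbed form of \eqref{SOrdCd} (obtained from Lemmas~\ref{LemCas27} and~\ref{lemma44} for $\ou_\be+th$ near $\ou_\be$ and $e$ near $\be$) yields $\mJ''_u(\ou_\be+th,e)(h_1,h_1)\ge\tfrac{\delta}{2}\|z_{h_1}\|_{L^2(\Omega)}^2$, while the cross and remainder terms are dominated using $|\mJ''_u(u,e)(v,w)|\le K_M\|z_v\|_{L^2(\Omega)}\|z_w\|_{L^2(\Omega)}$ from Lemma~\ref{lemma44} together with $\|z_{h_2}\|_{L^2(\Omega)}\le c\|h_2\|_{L^1(\Omega)}$ from Lemma~\ref{lem48}. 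Completing the square then gives $\int_0^1\mJ''_u(\ou_\be+th,e)(h,h)\,dt\ge-c\|h_2\|_{L^1(\Omega)}^2$, hence an upper bound on $D$.

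Collecting the two estimates produces an inequality of the form $\kappa'\|h\|_{L^1(\Omega)}^{1+\frac1\ae}\le cR\|e-\be\|_E+K_M\|e-\be\|_E\|h\|_{L^1(\Omega)}+c\|h_2\|_{L^1(\Omega)}^2$. To finish I would estimate $R$ by routing the adjoint difference through $\be$: Lemma~\ref{lem45} gives $\|\varphi_{\ou_e,e}-\varphi_{\ou_\be,e}\|_{L^\infty(\Omega)}\le K_M\|h\|_{L^1(\Omega)}$ and Lemma~\ref{LemEsFDrMJ} gives $\|\varphi_{\ou_\be,e}-\varphi_{\ou_\be,\be}\|_{L^\infty(\Omega)}\le K_M\|e-\be\|_E$, so $R\le C\big(\|e-\be\|_E^{1/\ae}+\|e-\be\|_E+\|h\|_{L^1(\Omega)}\big)$. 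Applying Young's inequality to the mixed product $\|e-\be\|_E\|h\|_{L^1(\Omega)}$ with exponents $1+\tfrac1\ae$ and $1+\ae$ and absorbing the resulting $\|h\|_{L^1(\Omega)}^{1+\frac1\ae}$ term into the left-hand side leaves $\|h\|_{L^1(\Omega)}^{1+\frac1\ae}\le C\big(\|e-\be\|_E^{1+\frac1\ae}+\|e-\be\|_E^{1+\ae}+\|e-\be\|_E^2\big)$; since $\|e-\be\|_E$ is small the smallest exponent on the right dominates, and raising to the power $\ae/(\ae+1)$ produces exactly \eqref{MnHldrEst} with exponent $\min\{\ae,1\}$.

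The step I expect to be the main obstacle is the treatment of the non-critical part $h_2$ supported on $\{|\varphi_{\ou_\be}|>\tau\}$. Estimating $\|h_2\|_{L^1(\Omega)}^2$ crudely by $\|h\|_{L^1(\Omega)}^2$ is adequate only when $\ae>1$, where $1+\tfrac1\ae<2$ lets the quadratic term be absorbed into $\|h\|_{L^1(\Omega)}^{1+\frac1\ae}$; for $\ae\le1$ this fails and one must show that $\|h_2\|_{L^1(\Omega)}$ is genuinely of higher order. I would exploit that on $\{|\varphi_{\ou_\be}|>\tau\}$ the control $\ou_\be$ is strictly bang-bang, so $\varphi_{\ou_\be}h$ has a definite sign there: the cost-increasing component of $h$ is penalised at first order and is controlled through the variational inequalities \eqref{PertbVarIneq} (and the growth already extracted by Lemma~\ref{LemEstFrmJ}), while the opposite component can arise only from the perturbation of the bounds and is therefore $O(\|e-\be\|_E)$; the measure estimate \eqref{AsmAdVrphi} is what quantifies this splitting. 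Closing this accounting uniformly in $e$ is the delicate part of the argument.
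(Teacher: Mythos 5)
Your architecture matches the paper's: lower-bound the quantity $D=\big(\mJ'_u(\ou_e,e)-\mJ'_u(\ou_\be,\be)\big)(\ou_\be-\ou_e)$ by Lemma~\ref{LemEstFrmJ}, Taylor-expand the difference of first derivatives at the unperturbed parameter, split the increment into critical and non-critical parts, and combine \eqref{SOrdCd} with Lemmas~\ref{LemCas27}, \ref{lemma44}, \ref{lem45}, \ref{LemEsFDrMJ}, \ref{lem48} and Young's inequality; the final exponent bookkeeping is also the paper's. But there is a genuine gap, and you have located it yourself: the control of the non-critical part $h_2$ (the paper's $w$) supported on $\{|\varphi_{\ou_\be}|>\tau\}$. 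Your proposal leaves this step open for $\ae\le 1$, and the remedy you sketch (a sign and measure argument via \eqref{AsmAdVrphi}) is not what closes it.

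The decisive point is the term you discard at the outset. You observe that $\mJ'_u(\ou_\be,\be)(u_e-\ou_\be)\ge 0$ and ``simply drop'' $\tfrac12\mJ'_u(\ou_\be,\be)(u_e-\ou_\be)$. The paper keeps it and upgrades the inequality: writing $\Omega_\tau:=\{x\in\Omega:|\varphi_{\ou_\be}(x)|\le\tau\}$, since $u_e\in\mU_{ad}(\be)$ and $\ou_\be$ is bang-bang with multiplier $\varphi_{\ou_\be}$, one has
\[
\mJ'_u(\ou_\be,\be)(u_e-\ou_\be)=\int_\Omega|\varphi_{\ou_\be}|\,|u_e-\ou_\be|\,dx\ \ge\ \tau\,\|w\|_{L^1(\Omega)},
\qquad w:=\chi_{\Omega\setminus\Omega_\tau}(u_e-\ou_\be).
\]
This first-order gain, linear in $\|w\|_{L^1(\Omega)}$, is exactly what dominates the quadratic error $-c\|w\|_{L^1(\Omega)}^2$ produced by the Hessian on the non-critical directions: after shrinking $\eta$ one gets $\|w\|_{L^1(\Omega)}\big(\tfrac{\tau}{2}-c_1\|w\|_{L^1(\Omega)}\big)\ge\tfrac{\tau}{4}\|w\|_{L^1(\Omega)}$, and no case distinction on $\ae$ is needed. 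So the obstacle you flag disappears once you refrain from discarding the very term that resolves it. A secondary but related point: the critical/non-critical splitting should be applied to $u_e-\ou_\be$ (with the extra piece $\widetilde w=\ou_e-u_e$, which is $O(\|e-\be\|_E)$ in $L^\infty$), not to $\ou_e-\ou_\be$ directly; only then does the critical part $v=\chi_{\Omega_\tau}(u_e-\ou_\be)$ genuinely belong to $C^\tau_{\ou_\be,p_0}$, whereas your $h_1$ satisfies the cone's sign conditions only ``up to an $L^\infty$-error,'' which would force yet another correction term. With these two adjustments your outline becomes the paper's proof.
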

\begin{proof}
By Lemma~\ref{LemEstFrmJ}, we have
$$\begin{aligned}
    &\big(\mJ'_u(\ou_e,e)-\mJ'_u(\ou_\be,\be)\big)(\ou_\be-\ou_e)\\
    &\qquad\ge \kappa' \|\ou_e-\ou_\be\|_{L^1(\Omega)}^{1+\frac{1}{\ae}}
     +\frac12 \mJ'_u(\ou_\be,\be)(u_e-\ou_\be)\\
    &\qquad\quad-c\left(\|e-\be\|_E^{\frac{1}{\ae}}
     +\|\varphi_{\ou_e,e}-\varphi_{\ou_\be,\be}\|_{L^\infty(\Omega)}
     +\|\ou_e-\ou_\be\|_{L^1(\Omega)}\right)\|e-\be\|_E
\end{aligned}$$
with $u_e\in \mU_{ad}(\be)$ and $\|\ou_e-u_e\|_{L^\infty(\Omega)}\le \|e-\be\|_E$  as above. We write
$$\begin{aligned}
    &\quad\ \big(\mJ'_u(\ou_e,e)-\mJ'_u(\ou_\be,\be)\big)(\ou_\be-\ou_e)\\
    &=\big(\mJ'_u(\ou_e,e) - \mJ'_u(\ou_e,\be)\big)(\ou_\be-\ou_e)
     +\big(\mJ'_u(\ou_e,\be) - \mJ'_u(\ou_\be,\be)\big)(\ou_\be-\ou_e).
\end{aligned}$$
Using the representation \eqref{PerbDermJ} of $\mJ'_u$ by adjoint states and the estimate of Lemma \ref{LemEsFDrMJ},
we obtain
$$\begin{aligned}
    \big|\big(\mJ'_u(\ou_e,e) - \mJ'_u(\ou_e,\be)\big)(\ou_\be-\ou_e)\big|
    &\le\|\mJ'_u(\ou_e,e)-\mJ'_u(\ou_e,\be)\|_{L^\infty(\Omega)}\|\ou_\be-\ou_e\|_{L^1(\Omega)}\\
    & = \|\varphi_{\ou_e,e}-\varphi_{\ou_e,\be}\|_{L^\infty(\Omega)}\|\ou_\be-\ou_e\|_{L^1(\Omega)}\\
    &\le c\|e-\be\|_E\|\ou_\be-\ou_e\|_{L^1(\Omega)}.
\end{aligned}$$
In addition,  by Lemmas \ref{lem45} and \ref{LemEsFDrMJ} we get
$$\|\varphi_{\ou_e,e}-\varphi_{\ou_\be,\be}\|_{L^\infty(\Omega)}
\le c(\|e-\be\|_E+\|\ou_e-\ou_\be\|_{L^1(\Omega)}).$$
This shows
\begin{equation}\label{eq416}
\begin{aligned}
    &\kappa'\|\ou_e-\ou_\be\|_{L^1(\Omega)}^{1+\frac{1}{\ae}}
     + (\mJ'_u(\ou_e,\be) - \mJ'_u(\ou_\be,\be))(\ou_e-\ou_\be)
     +\frac12 \mJ'_u(\ou_\be,\be)(u_e-\ou_\be)\\
    &\qquad\quad\;\le c \left( \|e-\be\|_E^{\frac{1}{\ae}}
    +\|e-\be\|_E+\|\ou_e-\ou_\be\|_{L^1(\Omega)}\right)\|e-\be\|_E.
\end{aligned}
\end{equation}
By Taylor expansion, we find
$$\big(\mJ'_u(\ou_e,\be)-\mJ'_u(\ou_\be,\be)\big)(\ou_e-\ou_\be)=\mJ''_u(\hu,\be)(\ou_e-\ou_{\be})^2,$$
where $\hu=\ou_{\be}+\theta(\ou_e-\ou_{\be})$ and $\theta\in(0,1)$.
Let us define
$$\Omega_\tau:=\{x\in \Omega:|\varphi_{\ou_{\be}}|\le\tau\}.$$
We now define
\[
 v = \chi_{\Omega_\tau}(u_e - \ou_\be), \quad
 w = \chi_{\Omega\setminus\Omega_\tau} ( u_e -  \ou_\be), \quad
 \widetilde w := \ou_e-u_e,
\]
such that $v+w+\widetilde w= \ou_e - \ou_\be$ and $v\in
C^\tau_{\ou_{\be},p_0}$, for the definition of
 $C^\tau_{\ou_{\be},p_0}$ see \eqref{ExCriConce}.
Moreover, we have $\|\widetilde w\|_{L^\infty(\Omega)}\le
\|e-\be\|_E$. Due to the feasibility $u_e\in\mU_{ad}(\be)$, we have
\[
\mJ'_u(\ou_\be,\be)(u_e-\ou_\be)=\int_\Omega |\varphi_{\ou_{\be}}|\cdot |u_e-\ou_{\be}|dx\geq \tau \|w\|_{L^1(\Omega)}.
\]
From the definition of $v$ and $w$ we get
$$\begin{aligned}
    &\mJ''_u(\hu,\be)(\ou_e-\ou_{\be})^2+\frac12\mJ'_u(\ou_\be,\be)(u_e-\ou_\be)\\
    &\qquad\ge\frac\tau2\|w\|_{L^1(\Omega)}+\mJ''_u(\ou_\be,\be)v^2+(\mJ''_u(\hu,\be)-\mJ''_u(\ou_\be,\be))v^2\\
    &\qquad\quad+\mJ''_u(\hu,\be)(w+ \widetilde w)^2 + 2 \mJ''_u(\hu,\be)(v, w+ \widetilde w).
\end{aligned}$$
Let us set for abbreviation $z_v:=z_{\ou_\be,v}^\be$,
$z_{\hu,v}:=z_{\hu,v}^\be$, and similarly for $z_{\hu,w}$,
$z_{\hu,\widetilde w}$. Using the second-order condition
\eqref{SOrdCd}, the continuity estimate of $\mJ_u''$ of Lemma
\ref{LemCas27}, and the estimate of $\mJ_u''$ of Lemma
\ref{lemma44}, we get
\begin{equation}\label{eq417}
\begin{aligned}
    &\mJ''_u(\hu,\be)(\ou_e-\ou_{\be})^2+\frac12\mJ'_u(\ou_\be,\be)(u_e-\ou_\be)\\
    &\qquad\ge\frac\tau2\|w\|_{L^1(\Omega)}+\delta\|z_v\|_{L^2(\Omega)}^2-\frac\delta4\|z_v\|_{L^2(\Omega)}^2
     -2K_M\big(\|z_{\hu,w}\|_{L^2(\Omega)}^2+\|z_{\hu,\widetilde w}\|_{L^2(\Omega)}^2\big)\\
    &\qquad\quad-2K_M\big(\|z_v\|_{L^2(\Omega)}+\|z_v-z_{\hu,v}\|_{L^2(\Omega)}\big)\big(\|z_{\hu,w}\|_{L^2(\Omega)}
     +\|z_{\hu,\widetilde w}\|_{L^2(\Omega)}\big)
    \end{aligned}
\end{equation}
for all $\ou_e$ in some ball $B^{p_0}_\eta(\ou_\be)$ with $\eta>0$.
Using Lemma~\ref{lem48}, we estimate
\[
 \|z_{\hu,w}\|_{L^2(\Omega)}\leq c\|w\|_{L^1(\Omega)}
\]
and
\[
  \|z_{\hu,\widetilde w}\|_{L^2(\Omega)}\leq c\|\widetilde w\|_{L^\infty(\Omega)} \le c \|e-\be\|_E.
\]
Applying Lemma~\ref{lem48}, we find
\[
 \|z_v-z_{\hu,v}\|_{L^2(\Omega)} \le c \eta \|z_v\|_{L^2(\Omega)}.
\]
Using this estimate in \eqref{eq417}, we obtain
$$\begin{aligned}
    &\mJ''_u(\hu,\be)(\ou_e-\ou_{\be})^2+\frac12\mJ'_u(\ou_\be,\be)(u_e-\ou_\be)\\
    &\qquad\ge\frac\tau2\|w\|_{L^1(\Omega)} + \frac34\delta\|z_v\|_{L^2(\Omega)}^2 \\
    &\qquad\quad -c\left( \|w\|_{L^1(\Omega)}^2
     +\|e-\be\|_E^2 + \|z_v\|_{L^2(\Omega)}(\|w\|_{L^1(\Omega)} + \|e-\be\|_E)\right)
\end{aligned}$$
with some $c>0$ independent of $e$ and $\ou_e$. Using Young's
inequality, the following inequality can be derived:
$$\begin{aligned}
    &\mJ''_u(\hu,\be)(\ou_e-\ou_{\be})^2+\frac12\mJ'_u(\ou_\be,\be)(u_e-\ou_\be) \\
    &\qquad\ge\|w\|_{L^1(\Omega)} \left( \frac\tau2 - c_1 \|w\|_{L^1(\Omega)} \right)
     + \frac12\delta\|z_v\|_{L^2(\Omega)}^2 - c_2\|e-\be\|_E^2.
\end{aligned}$$
By making $\eta$ smaller if necessary, we can achieve
\[
 \|w\|_{L^1(\Omega)} \left( \frac\tau2 - c_1 \|w\|_{L^1(\Omega)} \right)  \ge \frac\tau4 \|w\|_{L^1(\Omega)}.
\]
This shows
$$\mJ''_u(\hu,\be)(\ou_e-\ou_{\be})^2+\frac12\mJ'_u(\ou_\be,\be)(u_e-\ou_\be)
  \ge\frac\tau4\|w\|_{L^1(\Omega)}+\frac\delta2\|z_v\|_{L^2(\Omega)}^2-C\|e-\be\|_E^2.$$
Together with \eqref{eq416}, this implies
$$\begin{aligned}
    &\frac\tau4\|w\|_{L^1(\Omega)}+\frac\delta2\|z_v\|_{L^2(\Omega)}^2
     +\kappa'\|\ou_e-\ou_\be\|_{L^1(\Omega)}^{1+\frac{1}{\ae}}\\
    &\qquad\le c\left(\|e-\be\|_E^{\frac{1}{\ae}}+\|e-\be\|_E
     +\|\ou_e-\ou_\be\|_{L^1(\Omega)}\right) \|e-\be\|_E.
\end{aligned}$$
With Young's inequality we obtain
\[
 c \|\ou_e-\ou_\be\|_{L^1(\Omega)} \|e-\be\|_E \le \frac{\kappa'}2\|\ou_e-\ou_\be\|_{L^1(\Omega)}^{1+\frac{1}{\ae}}
 + c \|e-\be\|_E^{\ae+1}.
\]
Thus, we arrive at the inequality
\[
 \frac\tau4\|w\|_{L^1(\Omega)}+\frac\delta2\|z_v\|_{L^2(\Omega)}^2
     +\kappa'\|\ou_e-\ou_\be\|_{L^1(\Omega)}^{1+\frac{1}{\ae}}
     \le c\left( \|e-\be\|_E^{1+\frac{1}{\ae}}+\|e-\be\|_E^2 + \|e-\be\|_E^{\ae+1}\right).
\]
For $\ae\in[0,1]$, $\ae+1$ is the smallest exponent on the right-hand side,
if $\ae>1$, then $1+\frac{1}{\ae}$ is the smallest exponent.
This proves
$$\|\ou_e-\ou_\be\|_{L^1(\Omega)}\le C\|e-\be\|_E^{\min\{\ae,1\}},$$
and therefore we obtain
\eqref{MnHldrEst}. \qed
\end{proof}

\medskip
The following theorem shows that the (global) solution map
$S:\dom\mU_{ad}\rightrightarrows L^1(\Omega)$ admits a local upper
H\"{o}lderian selection at a given point $(\be,\ou_{\be})\in\gph S$
provided that for every $e\in\dom\mU_{ad}$ near $\be$,
problem~\eqref{BBConProb} has a (global) solution $\ou_e$ near
$\ou_{\be}$.

\begin{Theorem}\label{ThmLUpHldr}
Assume that all the assumptions of Theorem~\ref{ThmStabKKT} are
satisfied and let $(\be,\ou_{\be})\in\gph S$ be such that
$\ou_{\be}$ is strict in a neighborhood
$\oB^{p_0}_{\varepsilon}(\ou_{\be})$ with $\varepsilon>0$. Assume
further that for every $e\in\dom\mU_{ad}$ near $\be$,
problem~\eqref{BBConProb} has a solution $\ou_e$ satisfying
$\ou_e\in\oB^{p_0}_{\varepsilon}(\ou_{\be})$. Then, the solution map
$S:\dom\mU_{ad}\rightrightarrows L^1(\Omega)$ admits a local upper
H\"{o}lderian selection with the exponent $\ae\in[0,1]$ at the point
$(\be,\ou_{\be})$.
\end{Theorem}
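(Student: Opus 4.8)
The plan is to produce the selection explicitly and to read off its H\"olderian continuity from the quantitative stability already established in Theorem~\ref{ThmStabKKT}, using Theorem~\ref{ThmSolSbL2} only to reconcile the two relevant neighborhoods. Since $(\be,\ou_{\be})\in\gph S$ we have $\ou_{\be}\in S(\be)$; I set $h(\be):=\ou_{\be}$. For each $e\in\dom\mU_{ad}$ in a neighborhood of $\be$, the hypothesis furnishes a solution $\ou_e$ of \eqref{BBConProb} with $\ou_e\in\oB^{p_0}_{\varepsilon}(\ou_{\be})$; selecting one such solution for each $e$ defines a single-valued map $h(e):=\ou_e$ with $h(e)\in S(e)$. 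Neither continuity nor uniqueness of $h$ is required, because the estimate \eqref{MnHldrEst} holds for \emph{every} control satisfying the perturbed first-order system; hence whatever choice is made obeys the required bound.

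First I would check that the chosen $\ou_e$ meets the hypotheses of Theorem~\ref{ThmStabKKT}. Being a global solution of \eqref{BBConProb} lying in $\oB^{p_0}_{\varepsilon}(\ou_{\be})$, $\ou_e$ is in particular a solution of the localized problem \eqref{PrPrWtCdGl}, and, as every solution of the perturbed problem does, it satisfies the optimality system \eqref{PertbStateEq}--\eqref{PertbVarIneq}. The only remaining point is to place $\ou_e$ inside the specific ball $B^{p_0}_\eta(\ou_{\be})$ on which Theorem~\ref{ThmStabKKT} provides its estimate, where $\eta$ is prescribed by that theorem and may be strictly smaller than $\varepsilon$.

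To cross this gap I would invoke Theorem~\ref{ThmSolSbL2}, whose hypotheses coincide with ours ($\ou_{\be}$ is a strict bang-bang solution on $\oB^{p_0}_{\varepsilon}(\ou_{\be})$ and $\ou_e$ solves \eqref{PrPrWtCdGl}), to conclude that $\ou_e\to\ou_{\be}$ in $L^{p_0}(\Omega)$ as $e\to\be$. Consequently there is $\eta'>0$ with $\ou_e\in B^{p_0}_\eta(\ou_{\be})$ for all $e\in B_{\eta'}(\be)\cap\dom\mU_{ad}$. On this neighborhood Theorem~\ref{ThmStabKKT} applies and yields
\[
  \|h(e)-h(\be)\|_{L^1(\Omega)}=\|\ou_e-\ou_{\be}\|_{L^1(\Omega)}
  \le c\,\|e-\be\|_E^{\min\{\ae,1\}}=c\,\|e-\be\|_E^{\ae},
\]
the final equality using $\ae\in[0,1]$. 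This is precisely the H\"older property \eqref{HHldrSlect} with exponent $\ae$, so $h$ is the desired local upper H\"olderian selection of $S$ at $(\be,\ou_{\be})$.

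The substantive analytic work has already been absorbed into the two results being combined, so the main (and essentially the only) obstacle here is the neighborhood bookkeeping: reconciling the radius $\varepsilon$ coming from the existence hypothesis with the radius $\eta$ dictated by the stability estimate \eqref{MnHldrEst}. The convergence supplied by Theorem~\ref{ThmSolSbL2} is exactly what bridges these two scales, guaranteeing that the selected solutions eventually enter the region where the H\"older bound is valid; without it one could not certify that $h$ inherits the estimate near the reference parameter.
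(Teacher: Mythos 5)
Your proposal is correct and follows essentially the same route as the paper: define the selection $h$ by the hypothesized solutions, use Theorem~\ref{ThmSolSbL2} to bring $\ou_e$ into the ball $B^{p_0}_\eta(\ou_{\be})$ on which Theorem~\ref{ThmStabKKT} applies, and read off the H\"older bound from \eqref{MnHldrEst}. One small caveat: the $\ae$ from \textbf{(A4)} is only assumed positive, so the exponent you actually obtain is $\min\{\ae,1\}\in[0,1]$ rather than $\ae$ itself, which is precisely how the paper phrases the conclusion.
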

\begin{proof}
According to Theorems~\ref{ThmSolSbL2} and \ref{ThmStabKKT}, there
exist constants $\eta>0$ and $c>0$ satisfying
\begin{equation}\label{EstHolLip}
    \|\ou_e-\ou_{\be}\|_{L^1(\Omega)}\leq c\|e-\be\|_E^{\min\{\ae,1\}},\
    \forall e\in\oB_\eta(\be),
\end{equation}
where $\ou_e$ is a solution of problem~\eqref{BBConProb} with
respect to $e\in E$ satisfying
$\ou_e\in\oB^{p_0}_{\varepsilon}(\ou_{\be})$. Define a single-valued
function $h:\dom\mG\to L^1(\Omega)$ by $h(\be)=\ou_{\be}$ and
$h(e)=\ou_e$ for $e\in\dom\mG$. Then, for
$e\in\oB_\eta(\be)\cap\dom\mG$, by \eqref{EstHolLip} we obtain
$$\|h(e)-h(\be)\|_{L^1(\Omega)}\leq c\|e-\be\|^{\ae'}_E,$$
which yields that $h$ is a local upper H\"{o}lderian selection of
$S(\cdot)$ at the point $(\be,\ou_{\be})$, where the exponent
$\ae'=\min\{\ae,1\}\in[0,1]$. $\hfill\Box$
\end{proof}

\begin{Corollary}\label{CorLoUpLpSe}
Assume that all the assumptions of Theorem~\ref{ThmStabKKT} are
satisfied and let $(\be,\ou_{\be})\in\gph S$ be such that
$S(\be)=\{\ou_{\be}\}$. Then, the solution map
$S:\dom\mU_{ad}\rightrightarrows L^1(\Omega)$ of
problem~\eqref{BBConProb} has a local upper H\"{o}lderian selection
at $(\be,\ou_{\be})$ with the exponent $\ae\in[0,1]$.
\end{Corollary}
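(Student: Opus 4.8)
The plan is to derive the corollary directly from Theorem~\ref{ThmLUpHldr}, by showing that the single hypothesis $S(\be)=\{\ou_\be\}$ already forces both of the extra assumptions made there: that $\ou_\be$ be strict in a ball $\oB^{p_0}_\varepsilon(\ou_\be)$, and that for every $e$ near $\be$ the problem~\eqref{BBConProb} admit a global solution lying in that ball. All the remaining hypotheses of Theorem~\ref{ThmLUpHldr} (namely {\rm\textbf{(A1)}-\textbf{(A4)}} and the second-order condition \eqref{SOrdCd}) are assumed here through ``all the assumptions of Theorem~\ref{ThmStabKKT}''.

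The strictness assumption is immediate: $S(\be)=\{\ou_\be\}$ says precisely that $\ou_\be$ is the \emph{unique} global minimizer of $\mJ(\cdot,\be)$ over $\mU_{ad}(\be)$, so that $\mJ(\ou_\be,\be)<\mJ(u,\be)$ for all $u\in\mU_{ad}(\be)\setminus\{\ou_\be\}$. In particular $\ou_\be$ is strict in $\oB^{p_0}_\varepsilon(\ou_\be)$ for every $\varepsilon>0$, and I fix one such $\varepsilon$.

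For the second assumption I would first note that for each $e\in\dom\mU_{ad}$ the set $\mU_{ad}(e)$ is bounded, closed, and convex in $L^{p_0}(\Omega)$, while $u\mapsto\mJ(u,e)$ is weakly sequentially lower semicontinuous on it (the control-to-state map $G$ is completely continuous, so $y_{u+e_y}$ passes to strong limits along weakly convergent controls); hence the direct method gives a global solution $\ou_e\in S(e)$. Since $\ou_\be$ is the unique bang-bang solution, Corollary~\ref{CorSolSbL2} yields $\ou_e\to\ou_\be$ in $L^{p_0}(\Omega)$ as $e\to\be$, so there is $\eta>0$ with $\ou_e\in\oB^{p_0}_\varepsilon(\ou_\be)$ for all $e\in\oB_\eta(\be)$. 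With both hypotheses verified, Theorem~\ref{ThmLUpHldr} applies and produces a local upper H\"{o}lderian selection of $S(\cdot)$ at $(\be,\ou_\be)$ with exponent $\min\{\ae,1\}\in[0,1]$, which is the assertion.

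The reduction itself is mechanical, and I expect the only point requiring genuine care to be the last step: it is the \emph{global} convergence of Corollary~\ref{CorSolSbL2}, and not merely the localized convergence of Theorem~\ref{ThmSolSbL2}, that is needed to guarantee the global solutions $\ou_e$ lie inside $\oB^{p_0}_\varepsilon(\ou_\be)$. This is exactly where the full uniqueness $S(\be)=\{\ou_\be\}$ is used, rather than the weaker strictness-on-a-ball hypothesis of Theorem~\ref{ThmLUpHldr}; conversely, once global existence and this convergence are in hand, nothing beyond invoking the two earlier results remains to be done.
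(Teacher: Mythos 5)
Your proposal is correct and follows essentially the same route as the paper: the paper's own (one-line) proof likewise combines Theorem~\ref{ThmStabKKT} with the global convergence of Corollary~\ref{CorSolSbL2} and repeats the argument of Theorem~\ref{ThmLUpHldr}, which is exactly the content of your reduction. Your observation that the full uniqueness $S(\be)=\{\ou_{\be}\}$ is what licenses the use of Corollary~\ref{CorSolSbL2} (rather than the localized Theorem~\ref{ThmSolSbL2}) correctly identifies the only substantive point.
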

\begin{proof}
By applying Theorem~\ref{ThmStabKKT} and Corollary~\ref{CorSolSbL2}
and arguing similarly as in the proof of Theorem~\ref{ThmLUpHldr},
we obtain the assertion of the corollary. $\hfill\Box$
\end{proof}

\subsection{Lower estimate for regular subdifferential of $\mu(\cdot)$}

In this subsection, we will establish a characterization of regular
subgradients of the marginal function $\mu(\cdot)$ in a subspace
$E^*_1$ of $E^*$, where $E^*_1$ is defined as follows
$$\begin{aligned}
    E^*_1
    &:=L^2(\Omega)\times L^2(\Omega)\times L^1(\Omega)\times L^1(\Omega)\\
    &~\subset L^2(\Omega)\times L^2(\Omega)\times L^\infty(\Omega)^*\times L^\infty(\Omega)^*=E^*.
\end{aligned}$$
We now define the set
$$\begin{aligned}
    \widehat{\Xi}\big((\be,\ou_{\be});\gph\mU_{ad}\big)
    &=\Big\{(e^*,u^*)\in E^*_1\times L^2(\Omega)\Bst e^*=(0,0,e^*_\alpha,e^*_\beta),\,u^*=-e^*_\alpha-e^*_\beta,\\
    &\qquad e^*_\alpha|_{\Omega_1(\be,\ou_{\be})}\geq0,\,
            e^*_\alpha|_{\Omega\setminus\Omega_1(\be,\ou_{\be})}=0,\\
    &\qquad e^*_\beta|_{\Omega_3(\be,\ou_{\be})}\leq0,\,e^*_\beta|_{\Omega\setminus\Omega_3(\be,\ou_{\be})}=0\Big\}.
\end{aligned}$$
By arguing similarly as the proof of Lemma~\ref{LmFNCGpUad} we get
$$\widehat{\Xi}\big((\be,\ou_{\be});\gph\mU_{ad}\big)\subset\widehat{N}\big((\be,\ou_{\be});\gph\mU_{ad}\big).$$
Consequently, by setting
\begin{equation}\label{CmptLmdMG}
\begin{aligned}
    \widehat{\Lambda}^*\mG(\be,\ou_{\be})(u^*)
    &=\big\{e^*\in E^*_1\bst (e^*,-u^*)\in\widehat{\Xi}\big((\be,\ou_{\be});\gph\mG\big)\big\}\\
    &=\Big\{e^*\in E^*_1\Bst e^*=(0,0,e^*_\alpha,e^*_\beta),\,u^*=e^*_\alpha+e^*_\beta,\\
    &\qquad e^*_\alpha|_{\Omega_1(\be,\ou_{\be})}\geq0,\,e^*_\alpha|_{\Omega\setminus\Omega_1(\be,\ou_{\be})}=0,\\
    &\qquad e^*_\beta|_{\Omega_3(\be,\ou_{\be})}\leq0,\,e^*_\beta|_{\Omega\setminus\Omega_3(\be,\ou_{\be})}=0\Big\},
\end{aligned}
\end{equation}
we deduce that
\begin{equation}\label{EsLmdaMG}
    \widehat{\Lambda}^*\mG(\be,\ou_{\be})(u^*)\subset\widehat{D}^*\mG(\be,\ou_{\be})(u^*).
\end{equation}
Motivated by the estimate \eqref{EsLmdaMG}, we are going to
establish a lower estimate for $\widehat{\partial}\mu(\be)$ via a
characterization of regular subgradients of the marginal function
$\mu(\cdot)$ in the subspace $E^*_1$ of $E^*$ in the forthcoming
theorem.

\begin{Theorem}\label{ThmLwEsFrSb}
Assume that {\rm\textbf{(A1)}-\textbf{(A3)}} hold and let
$(\be,\ou_{\be})\in\gph S$ be given. Then, for any
$\he^*=(\he^*_y,\he^*_J,\he^*_\alpha,\he^*_\beta)\in\widehat{\partial}\mu(\be)\cap
E^*_1$, the following holds
\begin{equation}\label{DKFrSbDiff2}
\begin{cases}
   \he^*_y=\varphi_{\ou_{\be},\be},\\
   \he^*_J=y_{\ou_{\be}+\be_y},\\
   \he^*_\alpha|_{\Omega_1(\be,\ou_{\be})}\geq0,\,
   \he^*_\alpha|_{\Omega\setminus\Omega_1(\be,\ou_{\be})}=0,\\
   \he^*_\beta|_{\Omega_3(\be,\ou_{\be})}\leq0,\,\he^*_\beta|_{\Omega\setminus\Omega_3(\be,\ou_{\be})}=0,\\
   \he^*_\alpha+\he^*_\beta=\varphi_{\ou_{\be},\be}.
\end{cases}
\end{equation}
In addition, assume that the solution map $S(\cdot)$ admits a local
upper H\"{o}lderian selection $h(\cdot)$ with $h(\be)=\ou_{\be}$,
$h(e)=\ou_e$, and
\begin{equation}\label{HldrSelct}
    \|h(e)-h(\be)\|_{L^1(\Omega)}\leq c\|e-\be\|^{\ae}_E,~\forall e\in\oB_\eta(\be)\cap\dom\mG,
\end{equation}
for some $\eta>0$, $c>0$, and $\ae>1/2$. If an element
$\he^*=(\he^*_y,\he^*_J,\he^*_\alpha,\he^*_\beta)\in E^*_1$
satisfies \eqref{DKFrSbDiff2}, then
$\he^*\in\widehat{\partial}\mu(\be)$.
\end{Theorem}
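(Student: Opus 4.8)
The plan is to treat the two implications separately, working throughout directly from the definition \eqref{RegSubdif} of the regular subdifferential, i.e.\ from the requirement that $\liminf_{e\to\be}\big(\mu(e)-\mu(\be)-\langle\he^*,e-\be\rangle\big)/\|e-\be\|_E\ge0$. Since $\ou_{\be}\in S(\be)$ we have $\mu(\be)=\mJ(\ou_{\be},\be)$, and for every $e$ and every $u\in\mU_{ad}(e)$ the bound $\mu(e)\le\mJ(u,e)$ holds; these two facts drive both halves of the argument. Throughout I use $\zeta=0$, so $\mJ'_u(\ou_{\be},\be)$ is represented by the adjoint state $\varphi_{\ou_{\be},\be}$, cf.\ \eqref{PerbDermJ}.

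For the necessary part I would probe each coordinate of $\he^*$ by inserting cheap feasible competitors. Perturbing only $e_y$ (resp.\ only $e_J$) leaves $\mU_{ad}$ unchanged, so $\ou_{\be}$ stays feasible, $\mu(e)\le\mJ(\ou_{\be},e)$, and a first-order expansion of $\mJ(\ou_{\be},\cdot)$ via \eqref{DermJue} together with the two-sided liminf forces $\he^*_y=\varphi_{\ou_{\be},\be}$ and $\he^*_J=y_{\ou_{\be}+\be_y}$. To treat $\he^*_\alpha$ I would use perturbations of the lower bound $e_\alpha=\be_\alpha+t\chi_B$: on $B\subset A_\varepsilon=\{\ou_{\be}\ge\alpha+\be_\alpha+\varepsilon\}\subset\Omega\setminus\Omega_1(\be,\ou_{\be})$ one keeps the control $\ou_{\be}$ (still feasible for $|t|<\varepsilon$), so $\mu(e)\le\mu(\be)$ and the two-sided quotient yields $\int_B\he^*_\alpha=0$, hence $\he^*_\alpha=0$ off $\Omega_1(\be,\ou_{\be})$ after letting $\varepsilon\downarrow0$; on $B\subset\Omega_1(\be,\ou_{\be})$ one follows the bound with $u_e=\ou_{\be}+t\chi_B$, feasible for small $|t|$, whose first-order cost is $t\int_B\varphi_{\ou_{\be},\be}$, so the two signs of $t$ give $\he^*_\alpha\le\varphi_{\ou_{\be},\be}$ and $\he^*_\alpha\ge\varphi_{\ou_{\be},\be}$ on $\Omega_1(\be,\ou_{\be})$. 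The computation for $\he^*_\beta$ is symmetric. Here $\he^*\in E^*_1$ is used essentially: only because $\he^*_\alpha,\he^*_\beta\in L^1(\Omega)$ do the pairings $\langle\he^*_\alpha,\chi_B\rangle=\int_B\he^*_\alpha$ make pointwise sense. Finally the sign conditions and the relation $\he^*_\alpha+\he^*_\beta=\varphi_{\ou_{\be},\be}$ of \eqref{DKFrSbDiff2} follow by combining $\he^*_\alpha=\varphi_{\ou_{\be},\be}$ on $\Omega_1(\be,\ou_{\be})$, $\he^*_\beta=\varphi_{\ou_{\be},\be}$ on $\Omega_3(\be,\ou_{\be})$, vanishing elsewhere, with the pointwise consequences of the perturbed variational inequality \eqref{PertbVarIneq}, namely $\varphi_{\ou_{\be},\be}\ge0$ on $\Omega_1$, $\le0$ on $\Omega_3$, and $=0$ on $\Omega_2(\be,\ou_{\be})$.

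For the sufficient part I would exploit the selection: since $h(e)=\ou_e\in S(e)$, one has $\mu(e)=\mJ(\ou_e,e)$, so
\[\mu(e)-\mu(\be)=\big[\mJ(\ou_{\be},e)-\mJ(\ou_{\be},\be)\big]+\big[\mJ(\ou_e,e)-\mJ(\ou_{\be},e)\big].\]
The first bracket is differentiable in $e_y,e_J$ and equals $(\he^*_y,e_y-\be_y)+(\he^*_J,e_J-\be_J)+o(\|e-\be\|_E)$ by \eqref{DermJue} and the first two lines of \eqref{DKFrSbDiff2}. For the second bracket I would Taylor-expand $u\mapsto\mJ(u,e)$ about $\ou_{\be}$: its first-order term is $(\varphi_{\ou_{\be},e},\ou_e-\ou_{\be})$, which I replace by $(\varphi_{\ou_{\be},\be},\ou_e-\ou_{\be})$ at the cost of $\|\varphi_{\ou_{\be},e}-\varphi_{\ou_{\be},\be}\|_{L^\infty(\Omega)}\|\ou_e-\ou_{\be}\|_{L^1(\Omega)}=O(\|e-\be\|_E^{1+\ae})$ using Lemma~\ref{LemEsFDrMJ} and \eqref{HldrSelct}; the second-order remainder is controlled by Lemmas~\ref{lemma44} and~\ref{lem48} as $O(\|\ou_e-\ou_{\be}\|_{L^1(\Omega)}^2)=O(\|e-\be\|_E^{2\ae})$. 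Both errors are $o(\|e-\be\|_E)$ precisely because $\ae>1/2$; this exponent bookkeeping is the main quantitative obstacle, and it is exactly where the hypothesis $\ae>1/2$ is consumed.

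It then remains to bound the surviving first-order term $(\varphi_{\ou_{\be},\be},\ou_e-\ou_{\be})$ from below. Using $\varphi_{\ou_{\be},\be}=\he^*_\alpha+\he^*_\beta$ and the supports of $\he^*_\alpha,\he^*_\beta$ from \eqref{DKFrSbDiff2}, I split it as $(\he^*_\alpha,(\ou_e-\ou_{\be})|_{\Omega_1(\be,\ou_{\be})})+(\he^*_\beta,(\ou_e-\ou_{\be})|_{\Omega_3(\be,\ou_{\be})})$. On $\Omega_1(\be,\ou_{\be})$ feasibility $\ou_e\ge\alpha+e_\alpha$ and $\ou_{\be}=\alpha+\be_\alpha$, cf.\ \eqref{DjntSetsOm}, give $\ou_e-\ou_{\be}\ge e_\alpha-\be_\alpha$, and since $\he^*_\alpha\ge0$ this yields $(\he^*_\alpha,(\ou_e-\ou_{\be})|_{\Omega_1})\ge\langle\he^*_\alpha,e_\alpha-\be_\alpha\rangle$; on $\Omega_3(\be,\ou_{\be})$ the reversed inequality $\ou_e-\ou_{\be}\le e_\beta-\be_\beta$ together with $\he^*_\beta\le0$ gives $(\he^*_\beta,(\ou_e-\ou_{\be})|_{\Omega_3})\ge\langle\he^*_\beta,e_\beta-\be_\beta\rangle$. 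Collecting the two brackets shows $\mu(e)-\mu(\be)\ge\langle\he^*,e-\be\rangle+o(\|e-\be\|_E)$, whence the defining liminf is nonnegative and $\he^*\in\widehat{\partial}\mu(\be)$. Beyond the exponent estimate, the decisive idea is this one-sided comparison: the active-set sign conditions on $(\he^*_\alpha,\he^*_\beta)$ convert the feasibility of $\ou_e$ relative to $\ou_{\be}$ on $\Omega_1$ and $\Omega_3$ into exactly the $\langle\he^*_\alpha,e_\alpha-\be_\alpha\rangle$ and $\langle\he^*_\beta,e_\beta-\be_\beta\rangle$ contributions, so that no information about the unknown behaviour of $\ou_e$ off the bounds is needed.
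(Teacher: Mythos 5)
Your proposal is correct and, in its overall architecture, matches the paper's own proof: both halves work directly from the definition \eqref{RegSubdif}, the necessity part by inserting feasible competitors into the inequality $\mu(e)\le\mJ(u,e)$ (perturbing only $(e_y,e_J)$ to identify $\he^*_y,\he^*_J$, and using the sets $A_\varepsilon$ to kill $\he^*_\alpha$ off $\Omega_1(\be,\ou_{\be})$), and the sufficiency part by exactly the paper's four-term decomposition (second-order Taylor remainder in $u$, the adjoint-state perturbation $\varphi_{\ou_{\be},e}-\varphi_{\ou_{\be},\be}$, the sign/feasibility term on $\Omega_1$ and $\Omega_3$, and the Fr\'echet remainder in $e$), with the same appeal to Lemmas \ref{lemma44}, \ref{lem48} and \ref{LemEsFDrMJ} and the same consumption of $\ae>1/2$ through $\|e-\be\|_E^{2\ae-1}\to0$. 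The one genuine divergence is in the necessity of the last line of \eqref{DKFrSbDiff2}: the paper obtains $\he^*_\alpha+\he^*_\beta=\varphi_{\ou_{\be},\be}$ in one stroke from the diagonal perturbation $e_\alpha-\be_\alpha=e_\beta-\be_\beta=u-\ou_{\be}$, which shifts both bounds and the control together and is therefore always feasible, and derives the sign conditions separately from monotone perturbations of $e_\alpha$, $e_\beta$; you instead pin down the pointwise identities $\he^*_\alpha=\varphi_{\ou_{\be},\be}$ on $\Omega_1(\be,\ou_{\be})$ and $\he^*_\beta=\varphi_{\ou_{\be},\be}$ on $\Omega_3(\be,\ou_{\be})$ via bound-following competitors $u_e=\ou_{\be}+t\chi_B$ and then assemble the sum using the sign information \eqref{ValVarpOu} on the adjoint state. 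Your variant yields slightly sharper information, but is a little less robust: for $t>0$ the competitor $\ou_{\be}+t\chi_B$ is admissible only where $\beta+\be_\beta-(\alpha+\be_\alpha)\ge t$, so on a set where the bounds touch (not excluded by \textbf{(A1)}--\textbf{(A3)} alone) you obtain only one-sided information and the sum identity there must still be recovered by the paper's diagonal perturbation. This is a minor repair, not a flaw in the strategy.
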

\begin{proof}
By definition, we have
$\he^*=(\he^*_y,\he^*_J,\he^*_\alpha,\he^*_\beta)\in\widehat{\partial}\mu(\be)$
if and only if
\begin{equation}\label{DefSubMU}
    \liminf_{e\to\be}\frac{\mu(e)-\mu(\be)-\langle\he^*,e-\be\rangle}{\|e-\be\|_E}\geq0.
\end{equation}

Suppose that
$\he^*=(\he^*_y,\he^*_J,\he^*_\alpha,\he^*_\beta)\in\widehat{\partial}\mu(\be)\cap
E^*_1$. We verify that
$\he^*=(\he^*_y,\he^*_J,\he^*_\alpha,\he^*_\beta)$ satisfies
\eqref{DKFrSbDiff2}. By the inclusion
$\he^*\in\widehat{\partial}\mu(\be)$, we have \eqref{DefSubMU}.

Let
$e=(\be_y,\be_J,e_\alpha,e_\beta)\to(\be_y,\be_J,\be_\alpha,\be_\beta)=\be$
with $(e_\alpha,e_\beta)$ being chosen the same in the proof of
Lemma~\ref{LmFNCGpUad}. Then, we have $\ou_{\be}\in\mG(e)$, and thus
$\mJ(\ou_{\be},e)\geq\mu(e)$. Since $\mJ(\cdot,\cdot)$ does not
depend on $e_\alpha$ and $e_\beta$, we have
$\mJ(\ou_{\be},\be)=\mJ(\ou_{\be},e)$ which yields
$\mJ(\ou_{\be},\be)\geq\mu(e)$. Note that
$\mu(\be)=\mJ(\ou_{\be},\be)$ due to $(\be,\ou_{\be})\in\gph S$.
Consequently, from \eqref{DefSubMU} we obtain
\begin{equation}\label{LmifEalEbe}
    \liminf_{e\to\be}
    \frac{-\langle\he^*_\alpha,e_\alpha-\be_\alpha\rangle-\langle\he^*_\beta,e_\beta-\be_\beta\rangle}
         {\|e_\alpha-\be_\alpha\|_{L^\infty(\Omega)}+\|e_\beta-\be_\beta\|_{L^\infty(\Omega)}}\geq0.
\end{equation}
Using \eqref{LmifEalEbe} and arguing similarly as the proof of
Lemma~\ref{LmFNCGpUad} we deduce that
\begin{equation}\label{NcsCnd11}
\begin{cases}
   \he^*_\alpha|_{\Omega_1(\be,\ou_{\be})}\geq0,\,
   \he^*_\alpha|_{\Omega\setminus\Omega_1(\be,\ou_{\be})}=0,\\
   \he^*_\beta|_{\Omega_3(\be,\ou_{\be})}\leq0,\,\he^*_\beta|_{\Omega\setminus\Omega_3(\be,\ou_{\be})}=0.
\end{cases}
\end{equation}

Let
$e=(e_y,e_J,\be_\alpha,\be_\beta)\to(\be_y,\be_J,\be_\alpha,\be_\beta)=\be$.
Then, $\mJ(\ou_{\be},e)\geq\mu(e)$ by $\ou_{\be}\in\mG(\be)=\mG(e)$.
Combining this with \eqref{DefSubMU} we obtain
\begin{equation}\label{LinfMJEyJ}
    \liminf_{e\to\be}\frac{\mJ(\ou_{\be},e)-\mJ(\ou_{\be},\be)-\langle(\he^*_y,\he^*_J),(e_y,e_J)-(\be_y,\be_J)\rangle}
    {\|(e_y,e_J)-(\be_y,\be_J)\|_{L^2(\Omega)\times L^2(\Omega)}}\geq0.
\end{equation}
We have
\begin{equation}\label{ExpMJEyJ}
\begin{aligned}
    &\quad~\frac{\mJ(\ou_{\be},e)-\mJ(\ou_{\be},\be)-\langle(\he^*_y,\he^*_J),(e_y,e_J)-(\be_y,\be_J)\rangle}
                {\|(e_y,e_J)-(\be_y,\be_J)\|_{L^2(\Omega)\times L^2(\Omega)}}\\
    &=\frac{\mJ(\ou_{\be},e)-\mJ(\ou_{\be},\be)
     -\big\langle\big(\mJ'_{e_y}(\ou_{\be},\be),\mJ'_{e_J}(\ou_{\be},\be)\big),(e_y,e_J)-(\be_y,\be_J)\big\rangle}
                {\|(e_y,e_J)-(\be_y,\be_J)\|_{L^2(\Omega)\times L^2(\Omega)}}\\
    &\quad+\frac{\big\langle\big(\mJ'_{e_y}(\ou_{\be},\be)-\he^*_y,\mJ'_{e_J}(\ou_{\be},\be)-\he^*_J\big),
     (e_y,e_J)-(\be_y,\be_J)\big\rangle}{\|(e_y,e_J)-(\be_y,\be_J)\|_{L^2(\Omega)\times L^2(\Omega)}}.
\end{aligned}
\end{equation}
Since $\mJ(\cdot,\cdot)$ is Fr\'echet differentiable in $(e_y,e_J)$
at the point $(\be_y,\be_J)$, we have
$$\lim_{(e_y,e_J)\to(\be_y,\be_J)}\frac{\mJ(\ou_{\be},e)-\mJ(\ou_{\be},\be)-\big\langle\big(\mJ'_{e_y}(\ou_{\be},\be),
\mJ'_{e_J}(\ou_{\be},\be)\big),(e_y,e_J)-(\be_y,\be_J)\big\rangle}
{\|(e_y,e_J)-(\be_y,\be_J)\|_{L^2(\Omega)\times L^2(\Omega)}}=0.$$
Hence, from \eqref{LinfMJEyJ} and \eqref{ExpMJEyJ} it follows that
$$\liminf_{(e_y,e_J)\to(\be_y,\be_J)}\frac{\big\langle\big(\mJ'_{e_y}(\ou_{\be},\be)-\he^*_y,
\mJ'_{e_J}(\ou_{\be},\be)-\he^*_J\big),(e_y,e_J)-(\be_y,\be_J)\big\rangle}{\|(e_y,e_J)-(\be_y,\be_J)\|_{L^2(\Omega)
\times L^2(\Omega)}}\geq0,$$ which yields
$\big(\mJ'_{e_y}(\ou_{\be},\be)-\he^*_y,
\mJ'_{e_J}(\ou_{\be},\be)-\he^*_J\big)=(0_{L^2(\Omega)},0_{L^2(\Omega)})$.
Hence, we have
\begin{equation}\label{NcsCnd22}
\begin{cases}
    \he^*_y=\mJ'_{e_y}(\ou_{\be},\be)=\mJ'_u(\ou_{\be},\be)=\varphi_{\ou_{\be},\be},\\
    \he^*_J=\mJ'_{e_J}(\ou_{\be},\be)=G(\ou_{\be}+\be_y)=y_{\ou_{\be}+\be_y}.
\end{cases}
\end{equation}

Let
$e=(\be_y,\be_J,e_\alpha,e_\beta)\to(\be_y,\be_J,\be_\alpha,\be_\beta)=\be$
with $e_\alpha-\be_\alpha=e_\beta-\be_\beta$ and let $u\in\mG(e)$
with $u-\ou_{\be}=e_\alpha-\be_\alpha$. Note that $u\to\ou_{\be}$ as
$e\to\be$, and $\mJ(u,\be)=\mJ(u,e)\geq\mu(e)$. From
\eqref{DefSubMU} it follows that
\begin{equation}\label{LinfMJuAB}
    \liminf_{e_\alpha\to\be_\alpha}
    \frac{\mJ(u,\be)-\mJ(\ou_{\be},\be)-\langle\he^*_\alpha+\he^*_\beta,e_\alpha-\be_\alpha\rangle}
    {2\|e_\alpha-\be_\alpha\|_{L^\infty(\Omega)}}\geq0.
\end{equation}
By the choice of $e_\alpha$, $e_\beta$, and $u$ as above, we have
$$\begin{aligned}
    &\quad~\frac{\mJ(u,\be)-\mJ(\ou_{\be},\be)-\langle\he^*_\alpha+\he^*_\beta,e_\alpha-\be_\alpha\rangle}
    {2\|e_\alpha-\be_\alpha\|_{L^\infty(\Omega)}}\\
    &=\frac{\mJ(u,\be)-\mJ(\ou_{\be},\be)-\langle\mJ'_u(\ou_{\be},\be),u-\ou_{\be}\rangle
     +\langle\mJ'_u(\ou_{\be},\be)-\he^*_\alpha-\he^*_\beta,e_\alpha-\be_\alpha\rangle}
     {2\|e_\alpha-\be_\alpha\|_{L^\infty(\Omega)}}\\
    &=\frac{o\big(\|u-\ou_{\be}\|_{L^2(\Omega)}\big)
     +\langle\mJ'_u(\ou_{\be},\be)-\he^*_\alpha-\he^*_\beta,e_\alpha-\be_\alpha\rangle}
     {2\|e_\alpha-\be_\alpha\|_{L^\infty(\Omega)}}.
\end{aligned}$$
From this and \eqref{LinfMJuAB} with noting that
$$\lim_{e_\alpha\to\be_\alpha}
\frac{o\big(\|u-\ou_{\be}\|_{L^2(\Omega)}\big)}{2\|e_\alpha-\be_\alpha\|_{L^\infty(\Omega)}}=0$$
we deduce
\begin{equation*}
    \liminf_{e\to\be}
    \frac{\langle\mJ'_u(\ou_{\be},\be)-\he^*_\alpha-\he^*_\beta,e_\alpha-\be_\alpha\rangle}
    {2\|e_\alpha-\be_\alpha\|_{L^\infty(\Omega)}}\geq0,
\end{equation*}
which yields
$\mJ'_u(\ou_{\be},\be)-\he^*_\alpha-\he^*_\beta=0_{L^\infty(\Omega)^*}$.
Hence, we obtain
\begin{equation}\label{NcsCnd33}
    \he^*_\alpha+\he^*_\beta=\mJ'_u(\ou_{\be},\be)=\varphi_{\ou_{\be},\be}.
\end{equation}
Combining \eqref{NcsCnd11}, \eqref{NcsCnd22} and \eqref{NcsCnd33} we
obtain \eqref{DKFrSbDiff2}.

Conversely, we show that if
$\he^*=(\he^*_y,\he^*_J,\he^*_\alpha,\he^*_\beta)$ from $E^*_1$
holds \eqref{DKFrSbDiff2}, then $\he^*$ also holds \eqref{DefSubMU}
and thus $\he^*\in\widehat{\partial}\mu(\be)\cap E^*_1$. Note that
$$\mJ'_u(\ou_{\be},\be)=\varphi_{\ou_{\be},\be}=\he^*_\alpha+\he^*_\beta\quad\mbox{and}\quad\mJ'_e(\ou_{\be},\be)
=\big(\varphi_{\ou_{\be},\be},y_{\ou_{\be}+\be_y},0_{L^\infty(\Omega)^*},0_{L^\infty(\Omega)^*}\big).$$
Thus, by \eqref{DKFrSbDiff2} we have
\begin{equation}\label{ExpHEmJe}
    \he^*=\mJ'_e(\ou_{\be},\be)+\big(0_{L^2(\Omega)},0_{L^2(\Omega)},\he^*_\alpha,\he^*_\beta\big).
\end{equation}
Using the local upper H\"{o}lderian selection $h(\cdot)$ of the
solution map $S(\cdot)$ with $h(\be)=\ou_{\be}$ and $h(e)=\ou_e$ for
all $e\in\oB_\eta(\be)\cap\dom\mG$, from \eqref{ExpHEmJe} we deduce
that
$$\begin{aligned}
    \frac{\mu(e)-\mu(\be)-\langle\he^*,e-\be\rangle}{\|e-\be\|_E}
    &=\frac{\mJ(\ou_e,e)-\mJ(\ou_{\be},\be)-\langle\he^*,e-\be\rangle}{\|e-\be\|_E}\\
    &=\frac{\mJ(\ou_e,e)-\mJ(\ou_{\be},e)-\langle\he^*_\alpha,e_\alpha-\be_\alpha\rangle
      -\langle\he^*_\beta,e_\beta-\be_\beta\rangle}{\|e-\be\|_E}\\
    &\quad+\frac{\mJ(\ou_{\be},e)-\mJ(\ou_{\be},\be)
      -\langle\mJ'_e(\ou_{\be},\be),e-\be\rangle}{\|e-\be\|_E}\\
    &=\frac{\mJ(\ou_e,e)-\mJ(\ou_{\be},e)-\langle\mJ'_u(\ou_{\be},\be),\ou_e-\ou_{\be}\rangle}{\|e-\be\|_E}\\
    &\quad+\frac{\langle\he^*_\alpha+\he^*_\beta,\ou_e-\ou_{\be}\rangle-\langle\he^*_\alpha,e_\alpha-\be_\alpha\rangle
      -\langle\he^*_\beta,e_\beta-\be_\beta\rangle}{\|e-\be\|_E}\\
    &\quad+\frac{\mJ(\ou_{\be},e)-\mJ(\ou_{\be},\be)
      -\langle\mJ'_e(\ou_{\be},\be),e-\be\rangle}{\|e-\be\|_E},
\end{aligned}$$
and thus we have
$$\begin{aligned}
    \frac{\mu(e)-\mu(\be)-\langle\he^*,e-\be\rangle}{\|e-\be\|_E}
    &=\frac{\mJ(\ou_e,e)-\mJ(\ou_{\be},e)-\langle\mJ'_u(\ou_{\be},e),\ou_e-\ou_{\be}\rangle}{\|e-\be\|_E}\\
    &\quad+\frac{\langle\mJ'_u(\ou_{\be},e)-\mJ'_u(\ou_{\be},\be),\ou_e-\ou_{\be}\rangle}{\|e-\be\|_E}\\
    &\quad+\frac{\langle\he^*_\alpha+\he^*_\beta,\ou_e-\ou_{\be}\rangle-\langle\he^*_\alpha,e_\alpha-\be_\alpha\rangle
      -\langle\he^*_\beta,e_\beta-\be_\beta\rangle}{\|e-\be\|_E}\\
    &\quad+\frac{\mJ(\ou_{\be},e)-\mJ(\ou_{\be},\be)
      -\langle\mJ'_e(\ou_{\be},\be),e-\be\rangle}{\|e-\be\|_E}\\
    &=:\rho_1(e)+\rho_2(e)+\rho_3(e)+\rho_4(e).
\end{aligned}$$
By Lemma~\ref{lemma44}, we can find $\eta>0$ and $K_M>0$ such that
the following inequality
\begin{equation}\label{InqMJuvw}
    \big|\mJ''_u(u,e)(v_1,v_2)\big|\leq K_M\|z^e_{u,v_1}\|_{L^2(\Omega)}\|z^e_{u,v_2}\|_{L^2(\Omega)}
\end{equation}
holds for all $e\in B_\eta(\be)$, $u\in \mU_{ad}(e)$, and
$v_1,v_2\in L^2(\Omega)$. By \cite[Lemma~4.2]{QuiWch17}, we get
$$\|z^e_{u,v}\|_{L^2(\Omega)}=\|z_{u+e_y,v}\|_{L^2(\Omega)}\leq C_3\|v\|_{L^1(\Omega)},~\forall v\in L^1(\Omega),$$
for some constant $C_3>0$ independent of $u$ and $e$. From this and
\eqref{InqMJuvw} we infer that
\begin{equation}\label{InEqMJuvw}
    \big|\mJ''_u(u,e)(v_1,v_2)\big|\leq C\|v_1\|_{L^1(\Omega)}\|v_2\|_{L^1(\Omega)},~\forall v_1,v_2\in L^2(\Omega),
\end{equation}
where $C:=K_MC_3$. Using \eqref{InEqMJuvw} and \eqref{HldrSelct}, we
get
$$\begin{aligned}
  \lim_{e\to\be}|\rho_1(e)|
  &=\lim_{e\to\be}\frac{\big|\mJ(\ou_e,e)-\mJ(\ou_{\be},e)-\langle\mJ'_u(\ou_{\be},e),\ou_e-\ou_{\be}\rangle\big|}
                       {\|e-\be\|_E}\\
  &=\lim_{e\to\be}\frac{1}{2}\frac{\big|\mJ''_u(\hu_e,e)(\ou_e-\ou_{\be})^2\big|}{\|e-\be\|_E}
   \leq\lim_{e\to\be}\frac{C\|\ou_e-\ou_{\be}\|^2_{L^1(\Omega)}}{2\|e-\be\|_E}\\
  &=\lim_{e\to\be}\frac{C\|h(e)-h(\be)\|^2_{L^1(\Omega)}}{2\|e-\be\|_E}
   \leq\lim_{e\to\be}\frac{Cc^2\|e-\be\|_E^{2\ae}}{2\|e-\be\|_E}
   =0,
\end{aligned}$$
where $\hu_e=\ou_{\be}+\theta(\ou_e-\ou_{\be})$ for some function
$\theta(\cdot)$ with $0\leq\theta(x)\leq1$. In addition, by applying
Lemma~\ref{LemEsFDrMJ} we get for some constant $K_M>0$ that
$$\begin{aligned}
  \lim_{e\to\be}|\rho_2(e)|
  &=\lim_{e\to\be}
   \frac{\big|\langle\mJ'_u(\ou_{\be},e)-\mJ'_u(\ou_{\be},\be),\ou_e-\ou_{\be}\rangle\big|}{\|e-\be\|_E}\\
  &\leq\lim_{e\to\be}\frac{\|\mJ'_u(\ou_{\be},e)-\mJ'_u(\ou_{\be},\be)\|_{L^\infty(\Omega)}
   \|\ou_e-\ou_{\be}\|_{L^1(\Omega)}}{\|e-\be\|_E}\\
  &=\lim_{e\to\be}\frac{\|\varphi_{\ou_{\be},e}-\varphi_{\ou_{\be},\be}\|_{L^\infty(\Omega)}
   \|\ou_e-\ou_{\be}\|_{L^1(\Omega)}}{\|e-\be\|_E}\\
  &\leq\lim_{e\to\be}\frac{K_M\|e-\be\|_E\|\ou_e-\ou_{\be}\|_{L^1(\Omega)}}{\|e-\be\|_E}=0.
\end{aligned}$$
Moreover, by denoting $\Omega_i=\Omega_i(\be,\ou_{\be})$ for
$i=1,2,3$, it holds that
$\ou_{\be}|_{\Omega_1}=\alpha|_{\Omega_1}+\be_\alpha|_{\Omega_1}$
and $\ou_{\be}|_{\Omega_3}=\beta|_{\Omega_3}+\be_\beta|_{\Omega_3}$
due to the definition of $\Omega_1$ and $\Omega_3$. Hence, we obtain
$$\begin{aligned}
    \rho_3(e)
    &=\frac{\langle\he^*_\alpha+\he^*_\beta,\ou_e-\ou_{\be}\rangle
           -\langle\he^*_\alpha,e_\alpha-\be_\alpha\rangle-\langle\he^*_\beta,e_\beta-\be_\beta\rangle}{\|e-\be\|_E}\\
    &=\frac{\langle\he^*_\alpha|_{\Omega_1},\ou_e|_{\Omega_1}-\ou_{\be}|_{\Omega_1}-e_\alpha|_{\Omega_1}
           +\be_\alpha|_{\Omega_1}\rangle
           +\langle\he^*_\beta|_{\Omega_3},\ou_e|_{\Omega_3}-\ou_{\be}|_{\Omega_3}-e_\beta|_{\Omega_3}
           +\be_\beta|_{\Omega_3}\rangle}{\|e-\be\|_E}\\
  &=\frac{\langle\he^*_\alpha|_{\Omega_1},\ou_e|_{\Omega_1}-\alpha|_{\Omega_1}-e_\alpha|_{\Omega_1}\rangle
   +\langle\he^*_\beta|_{\Omega_3},\ou_e|_{\Omega_3}-\beta|_{\Omega_3}-e_\beta|_{\Omega_3}\rangle}{\|e-\be\|_E}\\
  &\geq0.
\end{aligned}$$
Finally, since the map $e\mapsto\mJ(u,e)$ is Fr\'echet
differentiable at $\be$ for each $u$, we get
$$\begin{aligned}
  \lim_{e\to\be}\rho_4(e)
  &=\lim_{e\to\be}\frac{\mJ(\ou_{\be},e)-\mJ(\ou_{\be},\be)-\langle\mJ'_e(\ou_{\be},\be),e-\be\rangle}{\|e-\be\|_E}=0.
\end{aligned}$$
Summarizing the above we deduce that
$$\liminf_{e\to\be}\frac{\mu(e)-\mu(\be)-\langle\he^*,e-\be\rangle}{\|e-\be\|_E}=\liminf_{e\to\be}\rho_3(e)\geq0.$$
This implies that $\he^*\in\widehat{\partial}\mu(\be)\cap E^*_1$.
$\hfill\Box$
\end{proof}

\medskip
From Theorem~\ref{ThmLUpHldr} and Theorem~\ref{ThmLwEsFrSb} we
obtain the following characterization of regular subgradients of the
marginal function $\mu(\cdot)$ in $E^*_1$.

\begin{Theorem}\label{ThmChRSbE1}
Let $(\be,\ou_{\be})\in\gph S$ and assume that all the assumptions
of Theorem~\ref{ThmLUpHldr} hold, where {\rm\textbf{(A4)}} holds
with $\ae>1/2$. Then, we have
$\he^*=(\he^*_y,\he^*_J,\he^*_\alpha,\he^*_\beta)\in\widehat{\partial}\mu(\be)\cap
E^*_1$ if and only if
$\he^*=(\he^*_y,\he^*_J,\he^*_\alpha,\he^*_\beta)\in E^*_1$
satisfies
\begin{equation}\label{ChrSbGrEst1}
\begin{cases}
   \he^*_y=\varphi_{\ou_{\be},\be},\\
   \he^*_J=y_{\ou_{\be}+\be_y},\\
   \he^*_\alpha|_{\Omega_1(\be,\ou_{\be})}\geq0,\,
   \he^*_\alpha|_{\Omega\setminus\Omega_1(\be,\ou_{\be})}=0,\\
   \he^*_\beta|_{\Omega_3(\be,\ou_{\be})}\leq0,\,\he^*_\beta|_{\Omega\setminus\Omega_3(\be,\ou_{\be})}=0,\\
   \he^*_\alpha+\he^*_\beta=\varphi_{\ou_{\be},\be}.
\end{cases}
\end{equation}
\end{Theorem}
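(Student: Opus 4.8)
The plan is to observe that condition~\eqref{ChrSbGrEst1} is \emph{verbatim} condition~\eqref{DKFrSbDiff2} of Theorem~\ref{ThmLwEsFrSb}, so that the desired equivalence is obtained by splicing together the two one-sided implications already proved there, with Theorem~\ref{ThmLUpHldr} furnishing the H\"{o}lderian selection required by the more delicate direction.

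For the ``only if'' direction I would appeal to the first assertion of Theorem~\ref{ThmLwEsFrSb}. That conclusion is proved under {\rm\textbf{(A1)}--\textbf{(A3)}} together with $(\be,\ou_{\be})\in\gph S$ alone, and these are all in force here, since the hypotheses of Theorem~\ref{ThmLUpHldr} subsume {\rm\textbf{(A1)}--\textbf{(A4)}} and $(\be,\ou_{\be})\in\gph S$. Hence every $\he^*\in\widehat{\partial}\mu(\be)\cap E^*_1$ satisfies~\eqref{DKFrSbDiff2}, i.e., it satisfies~\eqref{ChrSbGrEst1}.

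For the ``if'' direction the task reduces to producing a local upper H\"{o}lderian selection of $S$ with exponent strictly above $1/2$, so that the converse assertion of Theorem~\ref{ThmLwEsFrSb} can be invoked. Under the present hypotheses, Theorem~\ref{ThmLUpHldr} supplies a selection $h$ with $h(\be)=\ou_{\be}$ and $h(e)=\ou_e$ obeying $\|h(e)-h(\be)\|_{L^1(\Omega)}\le c\|e-\be\|_E^{\min\{\ae,1\}}$ near $\be$. Since {\rm\textbf{(A4)}} is assumed here with $\ae>1/2$, the selection exponent satisfies $\min\{\ae,1\}>1/2$ (it equals $\ae$ when $1/2<\ae\le1$ and equals $1$ when $\ae>1$), which is exactly the threshold demanded in~\eqref{HldrSelct}. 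Plugging this selection into the sufficiency clause of Theorem~\ref{ThmLwEsFrSb}, any $\he^*\in E^*_1$ fulfilling~\eqref{ChrSbGrEst1} belongs to $\widehat{\partial}\mu(\be)$, hence to $\widehat{\partial}\mu(\be)\cap E^*_1$.

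The single point requiring care is this exponent bookkeeping: the sufficiency part of Theorem~\ref{ThmLwEsFrSb} insists on a H\"{o}lder exponent exceeding $1/2$, whereas Theorem~\ref{ThmLUpHldr} only delivers the reduced exponent $\min\{\ae,1\}$. The strengthened hypothesis $\ae>1/2$ on {\rm\textbf{(A4)}} is precisely what guarantees $\min\{\ae,1\}>1/2$ and thereby reconciles the two statements; beyond invoking these earlier results no further estimation is needed.
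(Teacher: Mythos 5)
Your proposal is correct and matches the paper's own proof, which simply invokes Theorems~\ref{ThmLUpHldr} and \ref{ThmLwEsFrSb}; your explicit check that the selection exponent $\min\{\ae,1\}$ from Theorem~\ref{ThmLUpHldr} exceeds $1/2$ under the hypothesis $\ae>1/2$, as required by \eqref{HldrSelct}, is exactly the bookkeeping the paper leaves implicit.
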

\begin{proof}
Applying Theorems~\ref{ThmLUpHldr} and \ref{ThmLwEsFrSb}, we obtain
the assertion of the theorem. $\hfill\Box$
\end{proof}

\begin{Corollary}
Let $(\be,\ou_{\be})\in\gph S$ and assume that all the assumptions
of Corollary~\ref{CorLoUpLpSe} hold, where {\rm\textbf{(A4)}} holds
with $\ae>1/2$. Then, we have
$\he^*=(\he^*_y,\he^*_J,\he^*_\alpha,\he^*_\beta)\in\widehat{\partial}\mu(\be)\cap
E^*_1$ if and only if
$\he^*=(\he^*_y,\he^*_J,\he^*_\alpha,\he^*_\beta)\in E^*_1$
satisfies \eqref{ChrSbGrEst1}.
\end{Corollary}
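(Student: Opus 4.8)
The plan is to mirror the proof of Theorem~\ref{ThmChRSbE1}, replacing the appeal to Theorem~\ref{ThmLUpHldr} by one to Corollary~\ref{CorLoUpLpSe}, since both results serve only to produce a local upper H\"{o}lderian selection of $S(\cdot)$ at $(\be,\ou_{\be})$. I would split the biconditional into its two directions, which are furnished respectively by the two halves of Theorem~\ref{ThmLwEsFrSb}.

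For the necessity (``only if'') direction, I would observe that the first assertion of Theorem~\ref{ThmLwEsFrSb}, namely that every $\he^*\in\widehat{\partial}\mu(\be)\cap E^*_1$ satisfies \eqref{DKFrSbDiff2}, requires only the standing assumptions {\rm\textbf{(A1)}-\textbf{(A3)}} and the membership $(\be,\ou_{\be})\in\gph S$. As the system \eqref{DKFrSbDiff2} coincides verbatim with \eqref{ChrSbGrEst1}, this immediately yields the ``only if'' half under the present hypotheses, without invoking any H\"{o}lderian selection.

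For the sufficiency (``if'') direction, I would first invoke Corollary~\ref{CorLoUpLpSe} to obtain a local upper H\"{o}lderian selection $h(\cdot)$ of $S(\cdot)$ at $(\be,\ou_{\be})$ with $h(\be)=\ou_{\be}$, $h(e)=\ou_e$, and exponent $\min\{\ae,1\}$. The crucial bookkeeping step is to verify that this selection exponent exceeds $1/2$: since {\rm\textbf{(A4)}} is assumed with $\ae>1/2$, one has $\min\{\ae,1\}>1/2$ whether $\ae\le1$ or $\ae>1$. Hence $h(\cdot)$ fulfils the H\"{o}lder estimate \eqref{HldrSelct} with an admissible exponent, and the converse assertion of Theorem~\ref{ThmLwEsFrSb} applies to give $\he^*\in\widehat{\partial}\mu(\be)$ for every $\he^*\in E^*_1$ obeying \eqref{ChrSbGrEst1}. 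Combining the two directions establishes the stated equivalence.

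I do not anticipate any genuine obstacle beyond the elementary exponent verification just described, together with the routine check that the hypotheses of Corollary~\ref{CorLoUpLpSe}---the assumptions of Theorem~\ref{ThmStabKKT} plus $S(\be)=\{\ou_{\be}\}$---are precisely those assumed here. The only point warranting care is that the uniqueness $S(\be)=\{\ou_{\be}\}$ already forces $\ou_{\be}$ to be a strict solution, so no separate strictness hypothesis on a neighborhood (as in Theorem~\ref{ThmLUpHldr}) is needed; this is exactly why Corollary~\ref{CorLoUpLpSe}, rather than Theorem~\ref{ThmLUpHldr}, is the appropriate selection result to feed into Theorem~\ref{ThmLwEsFrSb}.
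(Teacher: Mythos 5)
Your proposal is correct and follows exactly the paper's route: the paper's proof is the one-line observation that the result follows from Corollary~\ref{CorLoUpLpSe} combined with Theorem~\ref{ThmLwEsFrSb}, which is precisely the decomposition you spell out (necessity from the first half of Theorem~\ref{ThmLwEsFrSb} under \textbf{(A1)}--\textbf{(A3)} alone, sufficiency by feeding the H\"olderian selection of exponent $\min\{\ae,1\}>1/2$ into its second half). Your additional bookkeeping on the exponent and on why $S(\be)=\{\ou_{\be}\}$ replaces the local strictness hypothesis is accurate and consistent with the paper.
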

\begin{proof}
It follows directly from Corollary~\ref{CorLoUpLpSe} and
Theorem~\ref{ThmLwEsFrSb}. $\hfill\Box$
\end{proof}

\begin{Remark}\rm
If all the assumptions of Theorem~\ref{ThmLwEsFrSb} hold around
$\be\in E$, then by using \eqref{LimSubdif} and \eqref{SngSubdif} we
obtain the following lower estimates for the Mordukhovich and
singular subdifferentials of $\mu(\cdot)$ via \eqref{DKFrSbDiff2} as
follows
\begin{equation}\label{LwEsMrSbDf}
    \partial\mu(\be)\supset\partial\mu(\be)\cap E^*_1\supset\Limsup_{e\stackrel{\mu}\longrightarrow\be}
    \Big(\widehat{\partial}\mu(e)\cap E^*_1\Big),
\end{equation}
and
\begin{equation}\label{LwEsSgSbDf}
    \partial^\infty\mu(\be)\supset\partial^\infty\mu(\be)\cap E^*_1\supset
    \Limsup_{\substack{e\stackrel{\mu}\longrightarrow\be\\ \lambda\downarrow0}}
    \lambda\Big(\widehat{\partial}\mu(e)\cap E^*_1\Big).
\end{equation}
If, in addition, there exists a sequence $e_n\to\be$ with
$\widehat{\partial}\mu(e_n)\cap E^*_1\neq\emptyset$, then
$0\in\partial^\infty\mu(\be)$. Indeed, since
$\widehat{\partial}\mu(e_n)\cap E^*_1$ is bounded by
\eqref{DKFrSbDiff2}. Combining this with \eqref{LwEsSgSbDf} yields
$0\in\partial^\infty\mu(\be)$.
\end{Remark}

\section{Concluding remarks}
\setcounter{equation}{0}

In this paper, we have obtained new results on differential
stability of a class of optimal control problems of semilinear
elliptic PDEs. We have established upper estimates for the regular,
the Mordukhovich, and the singular subdifferentials of the marginal
function $\mu(\cdot)$ in the setting that $E=L^2(\Omega)^4$ and
$\mQ\times\mU_{ad}(e)\subset L^2(\Omega)\times L^2(\Omega)$.
Furthermore, we have also obtained a new result on the existence of
local upper H\"{o}lderian selections of the solution map $S(\cdot)$
as well as a lower estimate for the regular subdifferential of
$\mu(\cdot)$ with respect to $E=L^2(\Omega)^2\times
L^\infty(\Omega)^2$ and $\mQ=L^{p_0}(\Omega)$, where $p_0>N/2$. In
the last setting, the problem of computing/estimating the limiting
subdifferentials of $\mu(\cdot)$ is very complicated since the
parametric space $E=L^2(\Omega)^2\times L^\infty(\Omega)^2$ is not
Asplund. For this reason, such problem remains open.

Further investigation, we are also interested in the problem of
computing subdifferentials of the marginal function $\mu(\cdot)$
with respect to the parametric space
\begin{equation}\label{PmtrSpcEOm}
    E=L^2(\Omega)\times L^2(\Omega)\times C(\bar\Omega)\times C(\bar\Omega).
\end{equation}
Note that for the parametric space $E$ given by \eqref{PmtrSpcEOm},
subgradients of the marginal function $\mu(\cdot)$ will be in the
form
$$e^*=(e^*_y,e^*_J,e^*_\alpha,e^*_\beta)\in E^*=L^2(\Omega)\times L^2(\Omega)\times C(\bar\Omega)^*\times
C(\bar\Omega)^*,$$ where $e^*_\alpha$ and $e^*_\beta$ are measures.
We think that the problem of characterization of subgradients
$e^*=(e^*_y,e^*_J,e^*_\alpha,e^*_\beta)\in E^*$ of $\mu(\cdot)$ with
functions $e^*_y$, $e^*_J$ and measures $e^*_\alpha$, $e^*_\beta$ is
an interesting and meaningful topic.

\end{document}